\newtheorem{theo}{Theorem}
\newtheorem*{theo*}{Theorem}
\newtheorem{cl}{Claim}
\newtheorem{lem}{Lemma}
\newtheorem*{lem*}{Lemma}
\newtheorem{prop}{Proposition}
\newcommand{\SST}{\mathop{\mathrm{SST}}}
\newcommand{\SYT}{\mathop{\mathrm{SYT}}}
\newcommand{\A}{\mathop{\mathrm{area}}}
\newcommand{\h}{\mathop{\mathrm{ht}}}
\newcommand{\dd}{\mathop{\mathrm{des}}}
\newcommand{\DD}{\mathop{\mathrm{Des}}}
\newcommand{\m}{\mathop{\mathrm{maj}}}
\newcommand{\M}{\mathop{\mathrm{Des}}}
\newcommand{\hooks}{\mathop{\mathrm{hooks}}}
\newcommand{\hook}{\mathop{\mathrm{hook}}}
\newcommand{\shape}{\mathop{\mathrm{Shape}}}
\newcommand{\pp}{\mathop{\mathrm{Part}}}
\title[Explicit formulas for Characters of the space of multivariate diagonal harmonics]{Explicit Combinatorial Formulas for  Some Irreducible Characters of the $GL_k\times \mathbb{S}_n$-module of multivariate diagonal harmonics}
\author{\href{wallace.nancy@courrier.uqam.ca}{Nancy Wallace}} 
\begin{document}
\maketitle
\tableofcontents
\footnote{This work is supported by a scholarship from the NSERC.}

\begin{abstract}We give an explicit combinatorial formula for some irreducible components of $GL_k\times \mathbb{S}_n$-modules of multivariate diagonal harmonics. To this end we introduce a new path combinatorial object $T_{n,s}$ allowing us to give the formula directly in terms of Schur functions. This paper also contains formulas written in terms of Schur functions in the $q$ and $t$ variables for special cases of $\nabla(e_n)$, $\nabla^r(e_n)$ and $\Delta'_{e_k}(e_n)$. We also give an interpretation in term of path to the adjoint dual Pieri rule applied on these $GL_k\times \mathbb{S}_n$-characters.
\end{abstract}

%%%%%%%%%%%%%%%%%%%%%%%%%%%%
%%%%%%%%%%%%%%%%  	Introduction
%%%%%%%%%%%%%%%%%%%%%%%%%%%%

\section{Introduction}
The aim of his paper is to describe some features of the characters of  ``rectangular" $GL_k\times\mathbb{S}_n$-modules, $\mathcal{E}_{m,n}^{\langle k\rangle}$, introduced by F. Bergeron in \cite{[B2018]}.

When $k=2$ and $m=n$, these modules are the modules of diagonal harmonics whose characters have been studied for many years. As shown in \cite{[GH1996a]} and \cite{[H2002]}, the Frobenius transformation of its graded characters may be expressed as $\nabla(e_n)$  where $\nabla$ is the Macdonald eigenoperator introduced in \cite{[BG1999]} and recalled in Section \ref{Sec : car} and $e_n$ is the $n$-th elementary symmetric function. A combinatorial interpretation that became known as the Shuffle Conjecture was introduced in \cite{[HHLRU2005]} and proved recently by Carlson and Mellit \cite{[CM2015]}, \cite{[M2016]}. These characters also intervene in torus knot link homology and algebraic geometry; see, for instance \cite{[H2017]}, \cite{[K2007]},\cite{[GNR2016]} and \cite{[OR2018]}. The case $k=3$ was studied in \cite{[BP2012]}. 

In recent work \cite{[B2018]} F. Bergeron made a breakthrough in the multivariate case ($k$ arbitrary). He found interesting relations between various irreducible characters of the modules. This allowed the study of the character of $\mathcal{E}_{n,n}^{\langle k\rangle}$ developed in the elementary symmetric functions for $n\leq 4$ in \cite{[BCP2018]} (in our notation this is the specialization $\sum c_{\lambda,\mu}s_\lambda(q,1,\ldots,1,0,0,\ldots)s_\mu(X)$, where there are $k-1$, $1$'s). These relations are also exploited here to obtain our main result. 

To state our result we briefly fix the required notation. We encode the characters of the irreducible $GL_k\times \mathbb{S}_n$ -modules  as products of Schur functions, $s_\lambda(\bm{Q})s_\mu(\bm{X})$ (see Section \ref{Sec : car} for details). For easier reading we will also write  $s_\lambda\otimes s_\mu$ for $s_\lambda(\bm{Q})s_\mu(\bm{X})$. As shown in \cite{[B2013]} for the case $m=n$ there is a stability property that makes it possible to avoid mentioning $k$. Therefore, the character of $\mathcal{E}_{m,n}^{\langle k \rangle}$ can be expressed in the form $\mathcal{E}_{m,n}=\sum_\mu \left(\sum_\lambda c_{\lambda,\mu}s_\lambda\right)\otimes s_\mu$. Our aim is to describe some features of $\mathcal{E}_{m,n}$, or equivalently, $\langle \mathcal{E}_{m,n},s_\mu\rangle =\sum_\lambda c_{\lambda,\mu}s_\lambda$. 

The main result of this paper is a combinatorial description of the multiplicity of $s_\lambda\otimes s_\mu$ in $\mathcal{E}_{n,
  n}$ when $\lambda$ is hook shaped. The result is constructive, in that the hooks are determined combinatorially by a standard Young tableau and certain paths in a staircase shaped grid. More precisely, we give the following combinatorial description (the basic combinatorial notations used in Theorem \ref{Thm : main} are recalled in Section \ref{Sec : tools}).

  \begin{theo}\label{Thm : main}If  $r=1$ and $\mu\in\{(n), (n-1,1), (n-2,1,1),(1^n)\}$ then:
  \begin{equation}\label{Eq : main}\langle \mathcal{E}_{rn,n}, s_{\mu}\rangle|_{\hooks}=\sum_{\tau\in \SYT(\mu)} \sum_{\gamma \in T_{n, \dd(\tau')}} s_{\hook(\gamma)},
  \end{equation}
where the first sum is over all standard Young tableaux, $\tau$, of shape $\mu$, the second sum is over paths $\gamma$ in $T_{n, \dd(\tau')}$ and $\hook(\gamma)=\left((r-1)\binom{n}{2} +\A(\gamma)+\h(\gamma)-\m(\tau')+1,1^{n-2-\h(\gamma) }\right)$.

Furthermore, when $\mu=1^n$, Equation \eqref{Eq : main} holds for all positive integers $r$ that satisfy the equality $ \langle\mathcal{E}_{rn,n}, s_{k+1,1^{n-k-1}}\rangle=e^\perp_k \langle\mathcal{E}_{rn,n}, s_{1^{n}}\rangle$ for all $k$.
\end{theo}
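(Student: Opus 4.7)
My plan is to treat $\mu = (1^n)$ as the base case and derive the remaining three hook shapes ($\mu \in \{(n), (n-1,1), (n-2,1,1)\}$) from it using the adjoint dual Pieri rule promised in the abstract. The hypothesis in the ``furthermore'' clause, namely $\langle \mathcal{E}_{rn,n}, s_{k+1,1^{n-k-1}}\rangle = e_k^\perp \langle \mathcal{E}_{rn,n}, s_{1^n}\rangle$, is exactly what licenses this reduction, and for $r=1$ it is known for $k \in \{0,1,2\}$ via the Shuffle Theorem and the results of F.~Bergeron recalled earlier. Consequently the whole theorem should follow once the base identity is established and once $e_k^\perp$ is shown to carry the path sum for $\mu=(1^n)$ onto the path sum indexed by $\SYT(\mu)$.

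For the base case, the unique tableau $\tau$ of shape $(1^n)$ satisfies $\tau' = (n)$, giving $\dd(\tau') = n-1$ and $\m(\tau') = 0$, so \eqref{Eq : main} collapses to $\sum_{\gamma \in T_{n,n-1}} s_{(\A(\gamma) + \h(\gamma) + 1,\, 1^{n-2 - \h(\gamma)})}$. I would identify the paths of $T_{n,n-1}$ with the Dyck paths appearing in the combinatorial model for $\nabla(e_n)$ and then use the standard $q,t$-Catalan identity $\langle \nabla(e_n), s_{1^n}\rangle = C_n(q,t)$ together with an area/bounce repackaging: each monomial $q^a t^b$ in $C_n(q,t)$ is the evaluation in two variables of the hook Schur $s_{(a+b+1,\, 1^{n-2-b})}$, so matching hooks to path statistics amounts to an area/height recoding of the area/bounce statistic pair on Dyck paths.

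For the reduction step, I would apply $e_k^\perp$ to the base-case formula and reorganize the result as a sum over $\SYT(\mu)$. On hooks, $e_k^\perp$ is combinatorially transparent: $e_k^\perp s_{(a,1^b)}$ is a short sum of sub-hooks obtained by removing a horizontal $k$-strip from the arm and leg, producing exactly the shape shift recorded by $\hook(\gamma)$ on the right-hand side. The tableaux $\tau \in \SYT(k+1,1^{n-k-1})$ parameterize the admissible strip removals, with $\dd(\tau')$ recording how many descents arise (hence controlling the parameter of the staircase $T_{n,\dd(\tau')}$) and $\m(\tau')$ producing the $q$-shift encoded by the $-\m(\tau')$ term in the arm length. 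A direct verification for $\mu = (n), (n-1,1), (n-2,1,1)$ then reduces to checking three explicit generating-function identities on staircase paths.

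The main obstacle I anticipate is the combinatorial bijection underlying this reduction: one must construct, for each $\tau \in \SYT(\mu)$, an explicit map from $T_{n,\dd(\tau')}$ into pairs (path in $T_{n,n-1}$, chosen horizontal $k$-strip) such that the arm and leg statistics $(r-1)\binom{n}{2} + \A(\gamma) + \h(\gamma) - \m(\tau') + 1$ and $n-2-\h(\gamma)$ match the output of $e_k^\perp$ on each base-case hook term, and one must then verify that summing over all $\tau$ of the given shape reconstructs the full image under $e_k^\perp$ without overcounting or cancellation. Getting the descent/major-index bookkeeping on $\tau'$ to align cleanly with the area/height statistics on staircase paths is, I expect, the delicate point, and it is presumably what restricts the statement to the four listed shapes of $\mu$.
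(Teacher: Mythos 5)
Your plan does not survive contact with either half of the problem. For the base case $\mu=(1^n)$ there is first a computational slip: the conjugate of the unique column tableau is the one-row tableau, so $\dd(\tau')=0$ and $\m(\tau')=0$, and the claimed sum runs over all of $T_{n,0}$, not over $T_{n,n-1}$, which by the paper's convention consists of a single path. More seriously, the multivariate hook expansion of $\langle\mathcal{E}_{rn,n},s_{1^n}\rangle$ cannot be read off from the two-variable identity $\langle\nabla(e_n),s_{1^n}\rangle=C_n(q,t)$: every hook Schur function $s_{(a,1^k)}(q,t)$ with $k\geq 2$ vanishes in two variables, so the $q,t$-specialization simply does not determine the longer hooks, and in any case a single monomial $q^at^b$ is not the evaluation of a hook Schur function. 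The actual proof of this case (Proposition \ref{Prop : alternant rn}) recovers the hooks of all lengths from the single-row components of the images $e_j^\perp\langle\mathcal{E}_{rn,n},s_{1^n}\rangle$, which are known explicitly as $q$-binomials from \cite{[Wal2019b]} (Equation \eqref{Eq : 1part r}), by the inclusion--exclusion Lemma \ref{Lem : altern}, and then turns the resulting alternating sum into the positive sum over $T_{n,0}$ via Proposition \ref{Prop : T_n=sum}. That lifting mechanism is precisely the idea your sketch is missing.

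The reduction step fails as well. Applying $e_k^\perp$ to $\langle\mathcal{E}_{n,n},s_{1^n}\rangle|_{\hooks}$ does not yield $\langle\mathcal{E}_{n,n},s_{(k+1,1^{n-k-1})}\rangle|_{\hooks}$, because $e_k^\perp$ acting on the non-hook components of the alternant (shapes such as $(a,2,1^j)$) also produces hook Schur functions; this is exactly why the paper only obtains a Schur-positive lower bound in Proposition \ref{Prop : perp sur chemin} and devotes Section \ref{Sec : 2e colonne} to the two-column components without being able to close the general hook case. The three shapes $(n),(n-1,1),(n-2,1,1)$ are not handled by any Pieri reduction in the paper: there one observes that $\dd(\tau')\geq n-3$ forces $\h(\gamma)\geq n-3$, so every hook produced by the formula has at most two rows, hence nothing is lost in restricting to two variables; one then matches the restriction with the known expansion of $\langle\nabla(e_n),s_\mu\rangle|_{\hooks}$ from \cite{[Wal2019b]} (Lemmas \ref{Lem : nabla hookhook} and \ref{Lem : restriction 2 variables}) and invokes the result of \cite{[B2018]} that $\langle\mathcal{E}_{n,n},s_\mu\rangle=\langle\nabla(e_n),s_\mu\rangle$ for these $\mu$. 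Two smaller inaccuracies point the same way: $e_k^\perp$ removes vertical strips (boxes in distinct rows), not horizontal ones, and the Pieri indices relevant to your three target shapes are $k=n-1,n-2,n-3$, not $k\in\{0,1,2\}$. As written, neither the base case nor the reduction can be completed, so the proposal has genuine gaps rather than being an alternative route.
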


In addition, Proposition \ref{Prop : perp sur chemin} gives a lower bound for the coefficients, which gives reason to believe that Equation \eqref{Eq : main} works for all $\mu$ when $r=1$. Moreover, in \cite{[B2018]} it is conjectured that $e_k^\perp \langle \mathcal{E}_{n,n}, s_\mu\rangle=_{\langle 2\rangle} \langle\Delta'_{e_{n-k-1}}e_n, s_\mu\rangle$ is true for all $k$. The notation $=_{\langle 2\rangle}$ denotes the restriction $\bm{Q}=\{q,t\}$. If this conjecture is true then  Equation \eqref{Eq : main} holds for all $\mu$ when $r=1$.

The combinatorial object $T_{n,s}$ ($T_{n,\dd(\tau')}$ in Equation \eqref{Eq : main}) represents a set of paths, these paths and their statistics ($\A$ and $\h$) will be defined in Section \ref{Sec : new object}. This object was introduced by the author in order to eliminate alternating sums and obtain a Schur positive expression. When $s=0$, they afford a generating function correlated to the $q$-Pochhammer symbol $(-qz;z)_n$. The notation $|_{\hooks}$, symmetric functions $s_\lambda$, $e_n$, the operators $\nabla$, $\Delta'$ and $e_k^\perp$ will be recollected in Section \ref{Sec : car}. Section \ref{Sec : main} will be dedicated to Equation \eqref{Eq : main},  and a proposition restricting Theorem \ref{Thm : main} to the $GL_2\times \mathbb{S}_n$-characters mentioned above, which gives formulas in terms of the major index for $\langle \nabla^r(e_n), s_{\mu}\rangle|_{\hooks}$, $\langle \Delta'_{e_k}(e_n),e_n\rangle|_{\hooks}$ and $\langle \Delta'_{e_k}(e_n),s_\mu\rangle|_{\textrm{some hooks}}$. In Section \ref{Sec : Pieri} we show how the adjoint dual Pieri rule can be applied directly on our paths. Finally Section \ref{Sec : 2e colonne} gives a formula similar to Equation \eqref{Eq : main} for shapes $\{(a,2,1^k)~|~ k\in\mathbb{N}, a\in \mathbb{N}_{\geq 2}\}$.

 %%%%%%%%%%%%%%%%%%%%%%%%%%%%%%%%%%%%%%%%%%%%%%%
 %%%%%%%%%%%%%%%		  combinatorial tools
  %%%%%%%%%%%%%%%%%%%%%%%%%%%%%%%%%%%%%%%%%%%%%%%
  
  \section{Combinatorial tools}\label{Sec : tools}

In this section the notions are classical, they are recalled here only to set notations.
A \begin{it}partition\end{it} of $n$ is a decreasing sequence of positive integers often represented as a Ferrer's diagram (see Figure \ref{Fig : tableau}). Each number in the sequence is called a \begin{it}part\end{it} and if it has $k$ parts it is of length $k$ denoted $\ell(\lambda)=k$. We say $\lambda$ is of \begin{it}size\end{it} $n$ if $\lambda=\lambda_1,\ldots,\lambda_k$ and $n=\sum_i\lambda_i$. For $\lambda$ a  \begin{it}Ferrer's diagram \end{it} of shape $\lambda=\lambda_1,\lambda_2,\ldots,\lambda_k$ is a left justified pile of boxes having $\lambda_i$ boxes in the $i$-th row. We will use the French notation so the second row lies on top of the first row (see Figure \ref{Fig : tableau}). We can see them as a subset of $\mathbb{N}\times\mathbb{N}$ if we put the bottom left corner of the diagram to the origin. In this setting, we can associate the bottom left corner of a box to the coordinate it lies on. We say a partition is  \begin{it}hook shaped\end{it} if it has the form $(a,1, \ldots,1)=(a,1^k)$, where $a, k \in \mathbb{N}$. If to each box of a diagram we associate an entry it is called a \begin{it}tableau\end{it}. A tableau is of shape $\lambda$ if it is a filling by integers of a diagram of shape $\lambda$. For $\lambda$ a partition of $n$, a tableau of shape $\lambda$ with distinct entries from $1$ to $n$ strictly increasing in rows and columns is a  \begin{it}standard Young tableau\end{it}. It is a \begin{it}semi-standard Young tableau\end{it} if the row entries are weakly increasing and the columns entries are strictly increasing. The set of all standard Young tableaux of shape $\mu$ is denoted $\SYT(\mu)$ and the set of all semi-standard Young tableaux of shape $\mu$ is denoted $\SST(\mu)$. The  \begin{it}descent\end{it} set of a tableau is the set of entries $i$ such that the entry $i+1$ lies in a row strictly above $i$. For a tableau $\tau$, the descent set is denoted $\DD(\tau)$ and the cardinality is denoted  $\dd(\tau)$. The sum of the elements of  $\DD(\tau)$ is called the  \begin{it}major index\end{it} and is denoted $\m(\tau)$ (see Figure \ref{Fig : descents}). The  \begin{it}conjugate \end{it} of a partition $\lambda$, (respectively a diagram $\lambda$, a tableau $\tau$) is denoted $\lambda'$ (respectively a diagram $\lambda'$, a tableau $\tau'$), and is its reflection through the line $x=y$ (see Figure \ref{Fig : conjugate}).

\begin{figure}[h!]
\begin{minipage}{4.7cm}
\centering
\begin{tikzpicture}[scale=.5]
\draw (0,0)--(4,0)--(4,1)--(2,1)--(2,3)--(1,3)--(1,5)--(0,5)--(0,0);
\draw (1,0)--(1,3);
\draw(2,0)--(2,1);
\draw (3,0)--(3,1);
\draw (4,0)--(4,1);
\draw (0,1)--(2,1);
\draw (0,2)--(2,2);
\draw (0,3)--(1,3);
\draw (0,4)--(1,4);
\end{tikzpicture}
\caption{
$\lambda=42211$}
\label{Fig : tableau}
\end{minipage}
\begin{minipage}{4.7cm}
\centering
\begin{tikzpicture}[scale=.5]
\draw (0,0)--(5,0)--(5,1)--(3,1)--(3,2)--(1,2)--(1,4)--(0,4)--(0,0);
\draw (1,0)--(1,2);
\draw(2,0)--(2,2);
\draw (3,0)--(3,1);
\draw (4,0)--(4,1);
\draw (0,1)--(3,1);
\draw (0,2)--(1,2);
\draw (0,3)--(1,3);
\draw (0,4)--(1,4);
\end{tikzpicture}
\caption{
$\lambda'=5311$}
\label{Fig : conjugate}
\end{minipage}
\begin{minipage}{7.8cm}
\centering
\begin{tikzpicture}[scale=.5]
\draw (0,0)--(4,0)--(4,1)--(2,1)--(2,3)--(1,3)--(1,5)--(0,5)--(0,0);
\draw (1,0)--(1,3);
\draw(2,0)--(2,1);
\draw (3,0)--(3,1);
\draw (4,0)--(4,1);
\draw (0,1)--(2,1);
\draw (0,2)--(2,2);
\draw (0,3)--(1,3);
\draw (0,4)--(1,4);
\node(1) at (.5,.5){$\textcolor{black}{1}$};
\node(2) at (1.5,.5){$\textcolor{black}{2}$};
\node(3) at (.5,1.5){$3$};
\node(4) at (2.5,.5){$\textcolor{black}{4}$};
\node(5) at (.5,2.5){$\textcolor{black}{5}$};
\node(6) at (.5,3.5){$6$};
\node(7) at (1.5,1.5){$7$};
\node(8) at (3.5,.5){$8$};
\node(9) at (.5,4.5){$\textcolor{black}{9}$};
\node(10) at (1.5,2.5){$\textcolor{black}{10}$};
\end{tikzpicture}
\caption{$\tau\in \SYT(42211)$;
$\dd(\tau)=4$, $\m(\tau)=19$
$\DD(\tau)=\{2,4,5,8\}$,}
\label{Fig : descents}
\end{minipage}
\end{figure}

We end this section by recalling the definition of the Gaussian polynomials, $\left[ \genfrac{}{}{0pt}{}{n}{k}\right]_q$. Let:
\begin{equation*}[n]_q=1+q+q^2+\cdots+q^{n-1} \text{ , }~[n]!_q=\prod_{i=1}^n [i]_q
\text{ and }\begin{bmatrix} n\\k\end{bmatrix}_q=\frac{[n]!_q}{[n-k]!_q[k]!_q}.
\end{equation*}
So when $q=1$ this gives the usual binomial coefficient. It is well known that the Gaussian polynomials are related to the north-east paths of a $k \times (n-k)$ grid: if $\mathcal{C}_{k}^n$ denotes the set of such paths, and the \begin{it}area\end{it} of a path $\gamma$, denoted $\A(\gamma)$, is defined as the number of boxes beneath the path, then $\sum_{\gamma\in\mathcal{C}_k^n} q^{area(\gamma)}=\left[ \genfrac{}{}{0pt}{}{n}{k}\right]_q$. 

%%%%%%%%%%%%%%%%%%%%%%%%%%%%%%%%%%%%%%%%%%%%%%%%
 %%%%%%%%%%%%%%%		  Characters of the space of multivariate diagonal harmonics
 %%%%%%%%%%%%%%%%%%%%%%%%%%%%%%%%%%%%%%%%%%%%%%%
 
   \section{The Space, the Characters, symmetric functions and Macdonald operators}\label{Sec : car}
   
    The symmetric function notations are the one used in Macdonald's book \cite{[Mac1995]}. The \begin{it}ring of symmetric polynomials\end{it} is a set of polynomials which are invariant by permutation of the variables $Y=\{y_1,y_2, \ldots, y_n\}$. The ring of symmetric polynomials is embedded in $\mathbb{Q}[Y]$. In other words if $f$ is a symmetric polynomials for all $\sigma \in \mathbb{S}_n$ we have:
\begin{equation*}
f(y_1,y_2,\ldots, y_n)=f(y_{\sigma^{-1}(1)}, y_{\sigma^{-1}(2)}, \ldots, y_{\sigma^{-1}(n)}).
\end{equation*}  
 The \begin{it}ring of symmetric functions\end{it}, denoted $\Lambda$, can be thought of as the ring of symmetric polynomials in an infinite set of variables. It is a graded ring and has the \begin{it}elementary symmetric functions\end{it} as a basis. These are defined by $e_n(X)=\sum_{i_1<\cdots<i_n} x_{i_1}x_{i_2}\cdots x_{i_n}$ and $e_\lambda=e_{\lambda_1}e_{\lambda_2}\cdots e_{\lambda_k}$. Schur functions also form a basis for the ring of symmetric functions. We now recall the combinatorial definition of Schur functions. For a tableau $\tau$ we define $x_\tau:=\prod_{c\in\tau}x_c$. The Schur functions are then defined by $s_\lambda=\sum_{\tau\in \SST(\lambda)} x_\tau$. For example, if $x_1=x$, $x_2=y$ and $x_3=z$, we have:
\begin{align*}s_{21}(x,y,z,\ldots)&=\begin{tikzpicture}[line cap=round,line join=round,>=triangle 45,x=1cm,y=1cm, thick, every node/.style={scale=0.7},scale=.3]
\draw (0,3)--(0,1)--(2,1)--(2,2);
\draw (0,1)--(2,1);
\draw (0,2)--(2,2);
\draw (0,3)--(1,3)--(1,1);
\draw (0.5,1.5) node{1};
\draw (0.5,2.5) node{2};
\draw (1.5,1.5) node{1};
\end{tikzpicture}+\begin{tikzpicture}[line cap=round,line join=round,>=triangle 45,x=1cm,y=1cm, thick, every node/.style={scale=0.7},scale=.3]
\draw (0,3)--(0,1)--(2,1)--(2,2);
\draw (0,1)--(2,1);
\draw (0,2)--(2,2);
\draw (0,3)--(1,3)--(1,1);
\draw (0.5,1.5) node{1};
\draw (0.5,2.5) node{3};
\draw (1.5,1.5) node{1};
\end{tikzpicture}+\begin{tikzpicture}[line cap=round,line join=round,>=triangle 45,x=1cm,y=1cm, thick, every node/.style={scale=0.7},scale=.3]
\draw (0,3)--(0,1)--(2,1)--(2,2);
\draw (0,1)--(2,1);
\draw (0,2)--(2,2);
\draw (0,3)--(1,3)--(1,1);
\draw (0.5,1.5) node{1};
\draw (0.5,2.5) node{2};
\draw (1.5,1.5) node{2};
\end{tikzpicture}+\begin{tikzpicture}[line cap=round,line join=round,>=triangle 45,x=1cm,y=1cm, thick, every node/.style={scale=0.7},scale=.3]
\draw (0,3)--(0,1)--(2,1)--(2,2);
\draw (0,1)--(2,1);
\draw (0,2)--(2,2);
\draw (0,3)--(1,3)--(1,1);
\draw (0.5,1.5) node{1};
\draw (0.5,2.5) node{3};
\draw (1.5,1.5) node{3};
\end{tikzpicture}+\begin{tikzpicture}[line cap=round,line join=round,>=triangle 45,x=1cm,y=1cm, thick, every node/.style={scale=0.7},scale=.3]
\draw (0,3)--(0,1)--(2,1)--(2,2);
\draw (0,1)--(2,1);
\draw (0,2)--(2,2);
\draw (0,3)--(1,3)--(1,1);
\draw (0.5,1.5) node{2};
\draw (0.5,2.5) node{3};
\draw (1.5,1.5) node{2};
\end{tikzpicture}+\begin{tikzpicture}[line cap=round,line join=round,>=triangle 45,x=1cm,y=1cm, thick, every node/.style={scale=0.7},scale=.3]
\draw (0,3)--(0,1)--(2,1)--(2,2);
\draw (0,1)--(2,1);
\draw (0,2)--(2,2);
\draw (0,3)--(1,3)--(1,1);
\draw (0.5,1.5) node{2};
\draw (0.5,2.5) node{3};
\draw (1.5,1.5) node{3};
\end{tikzpicture}+\begin{tikzpicture}[line cap=round,line join=round,>=triangle 45,x=1cm,y=1cm, thick, every node/.style={scale=0.7},scale=.3]
\draw (0,3)--(0,1)--(2,1)--(2,2);
\draw (0,1)--(2,1);
\draw (0,2)--(2,2);
\draw (0,3)--(1,3)--(1,1);
\draw (0.5,1.5) node{1};
\draw (0.5,2.5) node{3};
\draw (1.5,1.5) node{2};
\end{tikzpicture}+\begin{tikzpicture}[line cap=round,line join=round,>=triangle 45,x=1cm,y=1cm, thick, every node/.style={scale=0.7},scale=.3]
\draw (0,3)--(0,1)--(2,1)--(2,2);
\draw (0,1)--(2,1);
\draw (0,2)--(2,2);
\draw (0,3)--(1,3)--(1,1);
\draw (0.5,1.5) node{1};
\draw (0.5,2.5) node{2};
\draw (1.5,1.5) node{3};
\end{tikzpicture}+\cdots
\\ &=x^2y+x^2z+xy^2+xz^2+y^2z+yz^2+2xyz+\cdots
\end{align*}

Note that the columns are strictly increasing therefore we need a number of variables greater or equal to the length of the partition. 
 
 The ring of symmetric functions in the variables $Q=\{q_1,q_2,\ldots\}$ will be noted $\Lambda_Q$, the ring of symmetric functions in the variables $x_1,x_2,\ldots$ will be noted $\Lambda$. The product of a symmetric function $f$ in $\Lambda_Q$ and a symmetric function $g$ in $\Lambda$  will be noted $f\otimes g$. The set of such functions will be noted $\Lambda_Q\otimes \Lambda$. It is easy to see that it is a bigraded ring.
 If an element of $\Lambda$ (respectively $\Lambda_Q$, $\Lambda_Q\otimes \Lambda$) can be written in the basis of Schur function (respectively the basis $\{s_\lambda\}$, the basis $\{s_\lambda\otimes s_\mu\}$) with coefficients in $\mathbb{N}$ or $\mathbb{N}[q,t]$ (respectively $\mathbb{N}$, $\mathbb{N}$) is said to be Schur positive. The $\omega$ linear operator is defined by $\omega(s_\mu(X))=s_{\mu'(X)}$ which extends to $\Lambda_Q\otimes \Lambda$ by $\omega(s_\lambda\otimes s_\mu)=s_\lambda\otimes s_{\mu'}$.
 
  If $\sum_\lambda c_\lambda s_\lambda$ is a linear combination of Schur functions, and $\mathcal{V}$ is a set of shapes, then we define the notation $|_{\mathcal{V}}$ by $\sum_\lambda c_\lambda s_\lambda|_\mathcal{V}=\sum_{\lambda \in \mathcal{V}} c_\lambda s_\lambda$. On a symmetric function in the Schur basis, we set the restriction $|_{1 \textrm{part}}$ (respectively  $|_{\hooks}$) to be the partial sum over the Schur functions indexed by partitions having only one part (respectively that are hook shaped). 
For example if $\mathcal{V}=\{ 1 11, 3 2, 6 \}$ and $f=5 s_{1 11}+ 5s_{3 1}+s_{41}+2q^6s_{6}$ then:
 \begin{equation*}f|_{\mathcal{V}}=5 s_{1 11}+2q^6s_{6}.
 \end{equation*}

Before introducing our new combinatorial objects we provide more details about the modules $\mathcal{E}_{n,n}^{\langle k\rangle}$ and why they are interesting.  

 Let $X=(x_{i,j})_{i,j}$ where $1\leq i\leq k$,  $1\leq j\leq n$ and let $R_n^{\langle k \rangle}=\mathbb{Q}[X]$ denote the polynomial ring in the variables $X$. For $(\tau,\sigma)$ in $GL_k\times \mathbb{S}_n$ the group $GL_k\times \mathbb{S}_n$ acts on $R_n^{\langle k \rangle}$ as follows:  
   \begin{equation*}
    (\tau,\sigma). F(X)=F(\tau \cdot X \cdot \sigma)
    \end{equation*}
With this action we can define $\mathcal{E}_{n,n}^{\langle k \rangle}$ as the smallest submodule of $R_n^{\langle k \rangle}$ that contains the Vandermonde determinant, is closed under all higher polarization operators $\sum_{j=0}^n x_{r,j}\partial_{x_{s,j}}$ and  is closed under all partial derivatives $ \partial_{x_{s,j}}$.

As usual we may decompose this modules into a direct sum of irreducible modules. A module can be encoded by its character. Recall that both the irreducible characters for $GL_k$ and the Frobenius transforms of irreducible characters of $\mathbb{S}_n$ are Schur functions. 

For example, when $n=4$ we have:
\begin{align*}\mathcal{E}_{4,4}=&\newcommand{\Bold}[1]{\mathbf{\otimes1}}1 \otimes s_{4} + (s_{1}+ s_{2}+s_{3}) \otimes s_{31}+ (s_{2}+s_{21}+s_{4}) \otimes s_{22}
\\	&+(s_{11}+s_{2 1}+s_{31}+ s_{3}+ s_{4}+s_{5} ) \otimes s_{211} + (s_{111}+ s_{31}+s_{41}+s_{6}) \otimes s_{1111}
\end{align*}

If we start with the ring $\mathbb{Q}(q,t)[X]$ we also get a ring with Schur functions as a basis. Additionally, the combinatorial Macdonald polynomials, denoted $H_\mu$, are a basis for $\mathbb{Q}(q,t)[X]$. They appear as eigenvectors of special operators (see \cite{[B2009]} for more on this), $\nabla$ and the $\Delta'_{e_m}$, introduced in \cite{[BG1999]}, \cite{[BGHT1999]}. These Macdonald operators are defined as follows:
\begin{equation*}\nabla(H_\mu)=\prod_{(i,j)\in \mu}q^it^jH_\mu~ \text{ and }~\Delta'_{e_m}(H_\mu)=e_m\left[\sum_{(i,j)\in\mu}q^it^j-1\right]H_\mu.
\end{equation*}
The brackets are for plethysm. The notion of plethysm is not needed in this paper, but the curious reader could learn more on this in \cite{[B2009]}. The bivariate diagonal harmonics space was proven to have $\nabla(e_n(X))$ as a character in \cite{[H2002]}. Ergo $\mathcal{E}_{n,n}$ affords the following specialization:
\begin{equation*} \mathcal{E}_{n,n}^{\langle 2\rangle}=\nabla(e_n(X)).
\end{equation*}
One might notice from our previous example that if we take out the term $s_{111}\otimes s_{1111}$, or equivalently, set $q_1=q$, $q_2=t$, $q_j=0$ for $j\geq 3$ we have $\nabla(e_4)$. That extra term isn't a problem since $s_{111}(q,t,0,0,\ldots)=0$. As noted beforehand, in two variables Schur functions vanish if they have more then two parts.

By definition we have $\Delta'_{e_{n-1}}(e_n)=\nabla(e_n)$, which gives the character decomposition of  the $GL_2\times \mathbb{S}_n$ case stated in the introduction. It was proven that the coefficients are symmetric polynomials in the $q$ and $t$ variables, thus one could write the coefficients in the form $\sum_{\lambda,\mu} c_{\lambda,\mu}s_\lambda(q,t,0,\ldots)s_\mu(X)$. More generally, the $GL_k$ characters can be obtained by setting  $q_{k+1}=q_{k+2}=\cdots=0$. A stability property was proven in \cite{[B2013]}, we can therefore set $k$ to infinity and use a more general notation $\mathcal{E}_{n,n}$.

Moreover, F.Bergeron conjectures in \cite{[B2018]} that the restriction to two variables of $e_k^\perp \langle \mathcal{E}_{n,n}, s_\mu\rangle$ is equal to $\langle\Delta'_{e_{n-k-1}}e_n, s_\mu\rangle$ for all $k$. If this conjecture is true the Schur positive development of $\mathcal{E}_{n,n}$ (exists since its a character) gives us the Schur positive development of $\Delta'_{e_{k}}e_n$.

We will also discuss characters of the form $\mathcal{E}_{rn,n}$ which are related to $\nabla^r(e_n)$ when we restrict to $GL_2$. They are constructed by adding a set of inert variables considered to be of degree zero. For more details see \cite{[B2018]}.

We will now extend the Hall scalar product to fit with our notation in the following way, for $f$, $g$ in $\Lambda_Q$:
\begin{align*}\langle f\otimes s_\lambda, g\otimes s_\mu\rangle =\begin{cases}fg & \text{ if } \lambda=\mu 
\\ 0																	&\text{ if not} \end{cases}
\end{align*}

We will sometimes write $s_\mu$ instead of $1\otimes s_\mu$. Looking at our previous example we can easily see that:
\begin{align*}\langle\mathcal{E}_4, s_{1111}\rangle=\newcommand{\Bold}[1]{\mathbf{\otimes1}} s_{1 11}+ s_{3 1}+s_{41}+s_{6}
\end{align*}

Finally,  we  also need to recall that the dual Pieri rule describes the multiplication of a Schur function by $e_k$. The adjoint of the dual Pieri rule for the Hall scalar product, denoted $e_k^\perp$, is defined on the Schur basis and extended linearly. More precisely, $e_k^\perp s_\lambda$ is the sum  $\sum_\mu s_\mu$ over all partitions $\mu$ obtained by deleting $k$ boxes each lying in a different row (see Figure \ref{Fig : pieri}).

\begin{figure}[h!]
\centering
\begin{tikzpicture}[scale=.3]
\draw (0,0)--(0,4)--(1,4)--(1,2)--(3,2)--(3,1)--(4,1)--(4,0)--(0,0);
\draw (0,1)--(4,1);
\draw (0,2)--(3,2);
\draw (0,3)--(1,3);
\draw (1,0)--(1,4);
\draw (2,0)--(2,2);
\draw (3,0)--(3,1);
\node(e) at (-1,2){$e_2^\perp$};
\end{tikzpicture}
\begin{tikzpicture}[scale=.3]
\draw (0,0)--(0,4)--(1,4)--(1,2)--(2,2)--(2,1)--(3,1)--(3,0)--(0,0);
\draw (0,1)--(3,1);
\draw (0,2)--(2,2);
\draw (0,3)--(1,3);
\draw (1,0)--(1,4);
\draw (2,0)--(2,2);
\draw (3,0)--(3,1);
\node(e) at (-1,2){$=$};
\end{tikzpicture}
\begin{tikzpicture}[scale=.3]
\draw (0,0)--(0,3)--(1,3)--(1,2)--(3,2)--(3,0)--(0,0);
\draw (0,1)--(3,1);
\draw (0,2)--(3,2);
\draw (1,0)--(1,3);
\draw (2,0)--(2,2);
\draw (3,0)--(3,1);
\node(e) at (-1,2){$+$};
\end{tikzpicture}
\begin{tikzpicture}[scale=.3]
\draw (0,0)--(0,3)--(1,3)--(1,2)--(2,2)--(2,1)--(4,1)--(4,0)--(0,0);
\draw (0,1)--(4,1);
\draw (0,2)--(2,2);
\draw (1,0)--(1,3);
\draw (2,0)--(2,2);
\draw (3,0)--(3,1);
\node(e) at (-1,2){$+$};
\end{tikzpicture}
\begin{tikzpicture}[scale=.3]
\draw (0,0)--(0,2)--(3,2)--(3,1)--(4,1)--(4,0)--(0,0);
\draw (0,1)--(4,1);
\draw (0,2)--(3,2);
\draw (1,0)--(1,2);
\draw (2,0)--(2,2);
\draw (3,0)--(3,1);
\node(e) at (-1,2){$+$};
\end{tikzpicture}
\caption{$e_2^\perp s_{4311}=s_{3211}+s_{331}+s_{421}+s_{43}$}
\label{Fig : pieri}

\end{figure}

 %%%%%%%%%%%%%%%%%%%%%%%%%%%%%%%%%%%%%%%%%%%%%%% 
 %%%%%%%%%%%%%%%		Lifting to multivariate formulas 
  %%%%%%%%%%%%%%%%%%%%%%%%%%%%%%%%%%%%%%%%%%%%%%%
  
 \section{Lifting to multivariate formulas}\label{Sec : lift}
   
The following results gives us a way to lift to an alternating sum. We will show later  how to obtain positive sums from these.

  \begin{cl}Let $G\in \Lambda_Q\otimes \Lambda$, $G=\sum_\mu \mathcal{B}_{\mu}\otimes s_\mu$, where $\mathcal{B}_{\mu}=\sum_\lambda b_{\lambda,\mu}s_\lambda(Q)$, $b_{\lambda,\mu}\in\mathbb{N}$. If  $\lambda$ is a partition of shape $(a,b,1^k)$, with $k$ and $a$ arbitrary, and $b_{\lambda,\mu}\not=0$ then   the coefficient of $s_{(a,b-1)}(Q)$ in $e^{\perp}_{k+1}(\mathcal{B}_\mu)$ and the coefficient of $s_{(a-1,b-1)}(Q)$ in $e^{\perp}_{k+2}(\mathcal{B}_\mu)$ are not zero.
  
 \end{cl}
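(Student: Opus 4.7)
The plan is to apply the adjoint dual Pieri rule (recalled at the end of Section~\ref{Sec : car}) directly to the single Schur summand $s_{(a,b,1^k)}$ occurring in $\mathcal{B}_\mu$ and simply read off the two claimed coefficients. Since every $b_{\lambda,\mu}$ is a nonnegative integer and $e_j^\perp$ is Schur-positive, the coefficient of any $s_\nu$ in $e_j^\perp\mathcal{B}_\mu$ is a nonnegative integer combination over $\lambda$ of the coefficients of $s_\nu$ in $e_j^\perp s_\lambda$, weighted by $b_{\lambda,\mu}$. Consequently, it will be enough to exhibit $s_{(a,b-1)}$ with positive coefficient in $e_{k+1}^\perp s_{(a,b,1^k)}$ and $s_{(a-1,b-1)}$ with positive coefficient in $e_{k+2}^\perp s_{(a,b,1^k)}$; the contribution from $\lambda=(a,b,1^k)$ itself, which has $b_{\lambda,\mu}\geq 1$ by hypothesis, then forces the claimed coefficients in $e_j^\perp\mathcal{B}_\mu$ to be strictly positive, with no possibility of cancellation from other summands.

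First, I will analyze $e_{k+1}^\perp s_{(a,b,1^k)}$. The diagram has $k+2$ rows of lengths $a,b,1,\ldots,1$, and by the dual Pieri rule the expansion is the sum of $s_\nu$ over those $\nu$ obtained by deleting a vertical strip of size $k+1$, i.e.\ removing a single box from each of $k+1$ of the rows in such a way that the result remains a partition. I will focus on the removal that leaves the top row alone and deletes one box from each of the remaining $k+1$ rows: the row of length $b$ becomes one of length $b-1$, and the $k$ rows of length $1$ disappear, yielding the valid partition $(a,b-1)$ (valid because $a\geq b\geq 1$). This exhibits $s_{(a,b-1)}$ in $e_{k+1}^\perp s_{(a,b,1^k)}$ with coefficient at least one.

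Next, for $e_{k+2}^\perp s_{(a,b,1^k)}$, a vertical strip of size $k+2$ is forced to use every one of the $k+2$ rows of the diagram, since at most one box can be removed per row. The unique such removal produces $(a-1,b-1,0,\ldots,0)=(a-1,b-1)$, which is again a valid partition because $a\geq b$. This gives $s_{(a-1,b-1)}$ in $e_{k+2}^\perp s_{(a,b,1^k)}$ with coefficient one, and together with the positivity reduction from the first paragraph this completes the argument. The only thing to verify with any care is that the two exhibited removals yield honest partition shapes; I expect this to be the main (and only) technical point, and it is essentially routine Pieri bookkeeping. In particular, there is no need to enumerate the remaining terms in either $e_j^\perp s_{(a,b,1^k)}$ or the other $e_j^\perp s_{\lambda'}$, because Schur-positivity forbids any cancellation.
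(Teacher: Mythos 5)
Your proof is correct and follows essentially the same route as the paper: apply the adjoint dual Pieri rule to the summand $s_{(a,b,1^k)}$, identify the vertical-strip removals giving $(a,b-1)$ and $(a-1,b-1)$, and conclude by linearity together with the nonnegativity of the $b_{\lambda,\mu}$ so that no cancellation can occur. Your write-up is in fact slightly more careful than the paper's, which simply lists the full Pieri expansions and invokes linearity.
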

 
  \begin{proof}First notice that using the Pieri rule we have $e^{\perp}_{k+1}s_\lambda=s_{(a-1,b-1,1)}+s_{(a-1,b)}+s_{(a,b-1)}$ and $e^{\perp}_{k+2}s_\lambda=s_{(a-1,b-1)}$. Since the Pieri rule is linear and $c_{\lambda,\mu}\not=0$, we must have non-zero coefficients as claimed.  \end{proof}

 For the following lemma we first consider the application $\psi:\Lambda\rightarrow \mathbb{Q}[q,t]$ which is defined on the Schur basis by $\psi(s_\lambda)=q^{\lambda_1}t^{\ell(\lambda)-1}$ and extended linearly.  

The following lemma allows us to lift $\mathcal{E}^{\langle 2\rangle}_{m,n}$ to $\mathcal{E}_{m,n}$. Note that we can not obtain all the coefficients of $\mathcal{E}_{m,n}$ this way, but we know which are left out. If we consider the Schur decomposition of $\langle\mathcal{E}_{m,n},e_n\rangle$, the following lemma shows  how to obtain all the coefficients of the Schur functions indexed by partitions of shape  $(a,b+1,1^j)$, with $a$ and $j$ arbitrary and $b$ fixed.

   \begin{lem}\label{Lem : altern}Let $G \in \Lambda_Q$ be  a symmetric function. If $f_i(q)=\psi((e_i^\perp G)|_{1\textrm{part}})$, then:
   \begin{equation*} \psi(G|_{\hooks})(q,t)=\sum_{j\geq 0}\sum_{k=0}^j (-1)^k f_{j-k}(q)q^{-k}t^j
   \end{equation*}
 \end{lem}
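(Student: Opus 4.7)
The plan is to expand $G = \sum_\lambda g_\lambda s_\lambda$ in the Schur basis and reduce the identity to a clean telescoping sum, after identifying precisely which $\lambda$ feed into each $f_i(q)$.

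First I would invoke the adjoint dual Pieri rule: $e_i^\perp s_\lambda = \sum_{\nu} s_\nu$, summed over sub-partitions $\nu \subset \lambda$ with $\lambda/\nu$ a vertical strip of size $i$. For a one-part $\nu = (b)$, the skew shape $\lambda/(b)$ being a vertical strip forces every row of $\lambda$ beyond the first to have at most one box and $\lambda_1 - b \leq 1$, so $\lambda$ is itself a hook $(a,1^k)$. A short case analysis of the two allowed choices (stripping the $k$ singletons, or stripping those singletons together with the corner box of the first row) shows that a hook $\lambda = (a,1^k)$ contributes exactly $s_a$ to $e_k^\perp s_\lambda$ and $s_{a-1}$ to $e_{k+1}^\perp s_\lambda$, where the case $a = 1$ produces $s_\emptyset = 1$ and is understood as a one-part object with $\psi(s_\emptyset) = 1$. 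Setting $A_i(q) := \sum_{a \geq 1} g_{(a,1^i)}\, q^a$ (and $A_{-1} := 0$), this translates into the compact identities
\[
f_i(q) = A_i(q) + q^{-1} A_{i-1}(q), \qquad \psi(G|_{\hooks}) = \sum_{j \geq 0} A_j(q)\, t^j.
\]

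Next I would substitute $f_{j-k} = A_{j-k} + q^{-1} A_{j-k-1}$ into the right-hand side and split it as $S_1 + S_2$, where
\[
S_1 = \sum_{j \geq 0} \sum_{k=0}^{j} (-1)^k A_{j-k}(q)\, q^{-k} t^j, \qquad S_2 = \sum_{j \geq 0} \sum_{k=0}^{j} (-1)^k A_{j-k-1}(q)\, q^{-k-1} t^j.
\]
Shifting $k \mapsto k-1$ in $S_2$ (the boundary terms vanish because $A_{-1} = 0$) turns $S_2$ into the negative of the $k \geq 1$ tail of $S_1$. Thus every term with $k \geq 1$ cancels and only the $k = 0$ diagonal of $S_1$ survives, giving $\sum_j A_j(q)\, t^j = \psi(G|_{\hooks})$, which is the required identity.

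The main obstacle will be the bookkeeping for the hooks $(1, 1^k)$ (equivalently, column shapes): correctly interpreting the empty-partition contribution, so that $s_\emptyset$ is counted in $|_{1\text{part}}$ with $\psi$-value $1$, is what makes $f_i = A_i + q^{-1} A_{i-1}$ hold uniformly across \emph{all} hooks. Once this convention is fixed, the dual-Pieri analysis and the two-line telescoping close the proof without further computation.
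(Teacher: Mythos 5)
Your proof is correct, and at its core it is the same argument as the paper's: the adjoint dual Pieri rule shows that only hook terms of $G$ can produce one-part terms in $e_i^\perp G$, each hook $(a,1^i)$ feeds $q^a$ into $f_i$ and $q^{a-1}$ into $f_{i+1}$, and the alternating sum over $k$ cancels the second kind of contribution; the paper phrases this cancellation as inclusion--exclusion on monomials, while you phrase it as an index-shift telescoping of the polynomials $A_i$, which is tidier but not a different idea. The one place where you genuinely deviate is the boundary convention for columns, and your choice is the right one: the paper's proof keeps only the terms with ``$k-i+1=0$ and $a>1$'' or ``$k-i=0$'', i.e.\ it excludes the $s_\emptyset$ produced by $e_{k+1}^\perp s_{(1^{k+1})}$ from $|_{1\textrm{part}}$, and under that literal reading the stated identity fails as soon as $G$ has a column term (for $G=s_{11}$ the right-hand side is $qt-t^2+q^{-1}t^3-\cdots$ rather than $qt$); counting $s_\emptyset$ with $\psi(s_\emptyset)=1$, as you do, is exactly what makes $f_i=A_i+q^{-1}A_{i-1}$ hold uniformly and is also the convention implicitly used when the lemma is applied later (for instance Equation \eqref{Eq : 1part r} with $r=1$ and $j=n-1$ gives $f_{n-1}=1$, which is precisely such an $s_\emptyset$ contribution). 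One small caveat to record alongside your convention: it tacitly assumes $G$ has no constant term (for $G=1$ your right-hand side would be $\sum_{j}(-q^{-1}t)^j$ while the left-hand side vanishes), which is harmless for the characters the lemma is actually applied to, but is worth stating since the paper's degenerate $\mu=(n)$ case is handled separately there.
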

 
  \begin{proof} If $\lambda$ is such that $s_\lambda$ has a non-zero coefficient in $G|_{\hooks}$ there exists a $k$ such that $\lambda=(a,1^k)$. Therefore: 
 \[e_i^\perp s_\lambda=s_{a,1^{k-i}}+s_{a-1,1^{k-i+1}}.\]
When $\mu=(b,1^l)$ is not a partition we set $s_\mu=0$. If one of these 3 conditions apply $b < 0$, $l<0$ or $b=0$ and $l\geq 1$ then $\mu$ is not a partition. 
The restriction to one part (or equivalently $1$ variables) keeps only the term of $e_i^\perp s_\lambda$ for which  $k-i+1=0$ and $a>1$ or $k-i=0$. Moreover, if $q^a$ as a non-zero coefficient in $f_i$ then $s_{(a)}$ as a non-zero coefficient in  $(e_i^\perp G)|_{1\textrm{part}}$, by definition of $f_i(q)$. So by the previous claim $q^a$ is associated to a $\lambda$ that contributed $q^{a+1}$ in $f_{i-1}$ or $q^{a-1}$ in $f_{i+1}$. Using the inclusion exclusion principal, the sum $\sum_{k=0}^j (-1)^k f_{j-k}(q)q^{-k}$ gives the monomials in $f_j$ that are associated to a terms $s_\lambda$ in $G|_{\hooks}$ which contributes $q^{a-1}$ in $f_{j+1}$. Consequently $\lambda$ must be of length $j+1$ in $G|_{\hooks}$ and  $\sum_{k=0}^j (-1)^k f_{j-k}(q)q^{-k}$ is the coefficient of $t^j$ in $\psi(G|_{\hooks})(q,t)$ has claimed.
 \end{proof}
 
 This last lemma can be generalized as follows.
 
  \begin{lem}\label{Lem : inclu-exclu}Let $b$ be a constant in $\mathbb{N}^*$ and $G \in \Lambda_Q$ be  a symmetric function in the variables $Q=\{q_1,q_2,\ldots\}$. Let  $\mathcal{V}_b$ be the set of partitions of shape $(a,b,1^k)$ with $k$ and $a$ arbitrary. If $f_0(q)=\psi(G|_{\mathcal{V}_{b}}^{\langle 2\rangle})t^{-1}$ and
   \begin{equation*}
   f_i(q)=\psi((e_i^\perp G|_{\mathcal{V}_{b}})^{\langle 2\rangle}|_{\mathcal{V}_{b}})t^{-1},
   \end{equation*}  then:
   \begin{equation*} \psi(G|_{\mathcal{V}_{b+1}})(q,t)=\sum_{j\geq 1}\sum_{k=0}^j (-1)^k f_{j-k}(q)q^{-k}t^j
   \end{equation*}
 \end{lem}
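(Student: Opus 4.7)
The plan is to adapt the inclusion--exclusion and telescoping argument from the proof of Lemma~\ref{Lem : altern} from the hook family to the richer family $\mathcal{V}_b$ of shapes $(a,b,1^k)$. The argument falls into two main stages: a Pieri analysis that expresses each $f_i(q)$ explicitly in terms of the coefficients of $G$ on $\mathcal{V}_b \cup \mathcal{V}_{b+1}$, followed by an alternating-sum telescoping that inverts this relation to isolate the $\mathcal{V}_{b+1}$ coefficients.

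For the Pieri step, I would enumerate the partitions $\lambda$ for which $s_{(a,b)}$ can appear in $e_i^\perp s_\lambda$. Since $\lambda/(a,b)$ must be a vertical strip of size $i$, the first two rows of $\lambda$ are forced into $\lambda_1\in\{a,a+1\}$ and $\lambda_2\in\{b,b+1\}$, and all rows above the second must have length $1$. This yields exactly the four families
\[
(a,b,1^i),\quad (a+1,b,1^{i-1}),\quad (a,b+1,1^{i-1}),\quad (a+1,b+1,1^{i-2}),
\]
the first two lying in $\mathcal{V}_b$ and the last two in $\mathcal{V}_{b+1}$; the nonpartition convention $s_\mu=0$ already used in Lemma~\ref{Lem : altern} handles the edge cases. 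Introducing the coefficient generating series
\[
B_k(q) := \sum_a c_{(a,b,1^k)}\, q^a, \qquad C_k(q) := \sum_a c_{(a,b+1,1^k)}\, q^a,
\]
the contributions from the four families give the explicit decomposition $f_i = \bigl(B_i + q^{-1} B_{i-1}\bigr) + \bigl(C_{i-1} + q^{-1} C_{i-2}\bigr)$, each bracket being of the two-term form $A_i + q^{-1} A_{i-1}$ already treated in Lemma~\ref{Lem : altern}.

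For the telescoping, I would substitute this decomposition into $\sum_{k=0}^{j}(-1)^k q^{-k} f_{j-k}$ and apply the alternating cancellation of Lemma~\ref{Lem : altern} separately on each block. The $C$-block collapses onto $C_{j-1}$, which by definition of $\psi$ is exactly the coefficient of $t^j$ in $\psi(G|_{\mathcal{V}_{b+1}})$, since $\psi(s_{(a,b+1,1^{j-1})}) = q^a t^{j}$ and every partition in $\mathcal{V}_{b+1}$ has length at least two. Summing over $j \ge 1$ and attaching $t^j$ then reconstructs $\psi(G|_{\mathcal{V}_{b+1}})(q,t)$; the outer index starts at $j = 1$ because $\psi(G|_{\mathcal{V}_{b+1}})$ carries no constant-in-$t$ term.

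The hardest part will be the boundary bookkeeping: reconciling the asymmetric standalone definition of $f_0$ with the general formula for $f_i$, respecting the constraint $a \ge b+1$ required for $(a,b+1,1^k)$ to be a genuine partition, and aligning the $B$- and $C$-telescopes so that only the desired $C_{j-1}$ survives after cancellation. The Claim proved just before Lemma~\ref{Lem : altern}, which records which Pieri coefficients of a shape $(a,b,1^k)$ are forced to be nonzero, is exactly the pairing tool needed to make the telescoping collapse cleanly without leaving spurious residues.
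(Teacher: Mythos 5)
Your Pieri step is sound and is in substance the same computation as the paper's (the paper runs it forward from $\lambda=(a,b+1,1^k)$, you run it backward from the target shape $(a,b)$; the four families you list are exactly the shapes that occur), and your $C$-telescope collapsing onto $C_{j-1}$ is precisely the paper's ``remainder of the proof is similar to the previous lemma''. The genuine gap is the $B$-block. With your reading of $f_i$, namely $f_i=B_i+q^{-1}B_{i-1}+C_{i-1}+q^{-1}C_{i-2}$, the alternating sum $\sum_{k=0}^{j}(-1)^kq^{-k}f_{j-k}$ does not collapse onto $C_{j-1}$: the $B$-telescope cancels only its tail ($B_{-1}=0$) and leaves its head term $B_j$, so the sum equals $B_j+C_{j-1}$. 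Here $B_j$ is the generating series of the coefficients of $s_{(a,b,1^{j})}$ in $G$, which is generically nonzero (for $b=1$ these are the hook coefficients, which are abundant). This is not boundary bookkeeping and cannot be repaired by the Claim preceding Lemma \ref{Lem : altern}, which only records that certain Pieri coefficients are nonzero; under your interpretation of $f_i$ the identity you are telescoping toward is simply false, so the step ``only the desired $C_{j-1}$ survives'' would fail.

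What the paper actually does (despite the ambiguous wording of the statement) is take $f_i$ to receive contributions only from the $\mathcal{V}_{b+1}$-part of $G$: its proof considers exclusively $\lambda$ with nonzero coefficient in $G|_{\mathcal{V}_{b+1}}$, so effectively $f_i=\psi\bigl((e_i^\perp (G|_{\mathcal{V}_{b+1}}))^{\langle 2\rangle}|_{\mathcal{V}_b}\bigr)t^{-1}=C_{i-1}+q^{-1}C_{i-2}$, and then the telescope leaves exactly $C_{j-1}$, the coefficient of $t^j$ in $\psi(G|_{\mathcal{V}_{b+1}})$. This is also how the lemma is used later for the two-column shapes: the quantity fed into it is the difference $(e_k^\perp G)^{\langle 2\rangle}|_{\mathcal{V}_1}-\bigl(e_k^\perp (G|_{\mathcal{V}_1})\bigr)^{\langle 2\rangle}|_{\mathcal{V}_1}=\bigl(e_k^\perp (G|_{\mathcal{V}_2})\bigr)^{\langle 2\rangle}|_{\mathcal{V}_1}$, i.e.\ your $B$-block is subtracted off before the alternating sum is formed (that is the role of $h_k-g_k$ there). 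So to complete your argument you must either adopt that reading of $f_i$ (your four-family decomposition then reduces to the two $C$-terms and your telescope finishes the proof), or explicitly subtract the $e_i^\perp(G|_{\mathcal{V}_b})$ contribution from each $f_i$; as written, with both blocks present, the conclusion does not follow.
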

 
  \begin{proof} 
 
The difference is mainly that if $\lambda$ is such that $s_\lambda$ has a non-zero coefficient in $G|_{\mathcal{V}_{b+1}}$ there exists a $k$ such that $\lambda=a,b+1,1^k$. Therefore:  
 \[e_i^\perp s_\lambda=s_{a,b+1,1^{k-i}}+s_{a,b,1^{k-i+1}}+s_{a-1,b+1,1^{k-i+1}}+s_{a-1,b,1^{k-i+2}}\]
Since we have noticed before that the restriction to $2$ variables is equivalent to the restrictions to the sum over Schur functions indexed by partitions of length at most $2$. This means that we only keep the terms of $e_i^\perp s_\lambda$ such that $k+1= i$ and $a\geq b$ or $k+2=i$ and $a>b$. So:
\begin{equation*}(e_i^\perp s_\lambda)^{\langle 2\rangle}|_{\mathcal{V}_{b}}= \begin{cases}s_{a,b,1^{k-i+1}} &\text{ if $k+1=i$ } 
\\	s_{a-1,b,1^{k-i+2}}&\text{ if $k+2=i$ }
\\      0 &\text{ otherwise.}
\end{cases}
\end{equation*}
The remainder of the proof is similar to the previous lemma.
 \end{proof}

Note that $ \psi(G|_{\hooks})(q,t)$ is the sum of the restriction to $\mathcal{V}_1$ and the restriction to one parts ($|_{\mathcal{V}_0}$). This is the reason why, in Section \ref{Sec : new object}, we will use the convention that $T_{n,s}=\{\epsilon\}$ if $s\geq n-2$. The path $\epsilon$ relates to the restriction to one part.
 
 We should also notice that when $b\geq2$, no formula written in the Schur functions in the variables $q$ and $t$ is known for $\langle \nabla(e_n), s_{k,1^{n-k}}\rangle|_{\mathcal{V}_b}$ at this moment. Given this formula, the lemma gives a way to find the formulas for $\langle \mathcal{E}_{n,n}, s_{1^n}\rangle|_{\mathcal{V}_{b+1}}$.  
 
The restriction of $\mathcal{E}_{rn,n}$ to a set of two variables is predicted to be $\nabla^r(e_n)$ in \cite{[B2018]}. The equality $\langle\mathcal{E}_{n,n}, s_{j+1,1^{n-j}}\rangle=e^\perp_j \langle\mathcal{E}_{n,n}, s_{1^{n}}\rangle$ is conjectured to also hold for $\langle\mathcal{E}_{rn,n}, s_{j+1,1^{n-j}}\rangle=e^\perp_j \langle\mathcal{E}_{rn,n}, s_{1^{n}}\rangle$. Using the combinatorics of $m$-Schr\"oder paths, the author found the following $q$-analogue in \cite{[Wal2019b]}:
 \begin{equation}\label{Eq : 1part r}f_j^{(r)}(q):=\psi( \langle\nabla^r(e_n), s_{j+1,1^{n-j-1}}\rangle|_{\textrm{1 part}})=q^{r\binom{n}{2}-\binom{j+1}{2}}\begin{bmatrix}n-1\\ j \end{bmatrix}_{q^{-1}}.
 \end{equation}
That result and the last lemma will be used to find the hook components for  $ \langle\mathcal{E}_{n,n}, s_{1^{n}}\rangle$ (and conjectured hook components for  $ \langle\mathcal{E}_{rn,n}, s_{1^{n}}\rangle$). But first we need to introduce a combinatorial object that will help us transform the alternating sum into a positive sum.
 
  %%%%%%%%%%%%%%%%%%%%%%%%%%%%%%%%%%%%%%%%%%%%%%%
 %%%%%%%%%%%%%%%		New combinatorial object
  %%%%%%%%%%%%%%%%%%%%%%%%%%%%%%%%%%%%%%%%%%%%%%%
  
 \section{New combinatorial object}\label{Sec : new object}
 
 Our formula for $\langle \mathcal{E}_{rn,n}, s_{\mu}\rangle|_{\hooks}$ will be formulated in terms of a new objects that we denote by  $T_{n,s}$. We will often write  $T_n$ for $T_{n,0}$. These objects help to transform an alternating sum into a positive sum and has a $q,z$-analogue, $T_n(q,z)$, realizing the $q$-Pochhammer symbol (or $q$-rising factorial) $(-qz;q)_{n-2}=\prod_{i=1}^{n-2} (1+zq^i)$. Notice that the substitution $T_n(q,-zq^{-1})$ bring us back to the usual way of seeing the $q$-Pochhammer symbol $(z;q)_{n-2}=\prod_{i=0}^{n-3} (1-zq^i)$.

Let $T_{n,s}$ denote the set of north-east paths in an $n-2$ staircase shaped grid lying in $\mathbb{N}^2$, starting at $(0,s)$ and ending at a point in the set $\{ (x,y)~|~x+y=n-2 , x\geq 0\text{ and } y\geq s\}$. For an example see Figure \ref{Fig : tns}. The relevant paths can be represented as words of length $n-s-2$ in the alphabet $\{N,E\}$. For reasons stated earlier when $s>n-1$ we set $s=n-2$. In that case $T_{n,s}=\{\epsilon\}$, where $\epsilon$ is the empty word. Note that $\A(\epsilon)=\binom{n-1}{2}$ and $\h(\epsilon)=n-2$. Notice that for $n<2$, $T_{n,s}=\emptyset$.

The \begin{it}area\end{it} of a path, denoted $\A$, is the number of boxes south-east of the path (see Figure \ref{Fig : aire}). The \begin{it}height\end{it} of a path is the $y$ coordinate of its end point (see Figure \ref{Fig : ht}).

\begin{figure}[h!]
\centering
\begin{minipage}{4.2cm}
\begin{tikzpicture}[scale=.4]
\draw [thin, gray](0,0)--(0,5)--(1,5)--(1,4)--(2,4)--(2,3)--(3,3)--(3,2)--(4,2)--(4,1)--(5,1)--(5,0)--(0,0);
\draw [thin, gray](1,0)--(1,4);
\draw [thin, gray](2,0)--(2,3);
\draw [thin, gray](3,0)--(3,2);
\draw [thin, gray](4,0)--(4,1);
\draw [thin, gray](0,1)--(4,1);
\draw [thin, gray](0,2)--(3,2);
\draw [thin, gray](0,3)--(2,3);
\draw [thin, gray](0,4)--(1,4);
\node(d) at (0,2){\textcolor{blue}{$\bullet$}};
\node(deb) at (-2.5,2){\textcolor{blue}{start}};
\draw[->, blue] (deb)--(d);
\node(f3) at (3,2){\textcolor{red}{$\bullet$}};
\node(f4) at (2,3){\textcolor{red}{$\bullet$}};
\node(f5) at (1,4){\textcolor{red}{$\bullet$}};
\node(f6) at (0,5){\textcolor{red}{$\bullet$}};
\node(fin) at (5.5,5.5){\textcolor{red}{finish}};
\draw[->, red] (fin)--(f3);
\draw[->, red] (fin)--(f4);
\draw[->, red] (fin)--(f5);
\draw[->, red] (fin)--(f6);
\draw[<->] (-0.5,0)--(-.5,1.8);
\node(p) at (-1,1){$s$};
\draw[<->] (0,-.5)--(5,-.5);
\node(n) at (2.5,-1){$n-2$};
\end{tikzpicture}
\caption{$T_{7,2}$}
\label{Fig : tns}
\end{minipage}
\begin{minipage}{6.3cm}
\centering
\begin{tikzpicture}[scale=.5]
\filldraw[pink] (0,2)--(0,3)--(1,3)--(1,4)--(2,4)--(2,3)--(3,3)--(3,2)--(4,2)--(4,1)--(5,1)--(5,0)--(0,0);
\draw[thin, gray] (0,0)--(0,5)--(1,5)--(1,4)--(2,4)--(2,3)--(3,3)--(3,2)--(4,2)--(4,1)--(5,1)--(5,0)--(0,0);
\draw[thin, gray] (1,0)--(1,4);
\draw[thin, gray] (2,0)--(2,3);
\draw[thin, gray] (3,0)--(3,2);
\draw[thin, gray] (4,0)--(4,1);
\draw[thin, gray] (0,1)--(4,1);
\draw[thin, gray] (0,2)--(3,2);
\draw[thin, gray] (0,3)--(2,3);
\draw[thin, gray] (0,4)--(1,4);
\draw[blue, thick] (0,2)--(0,3)--(1,3)--(1,4);
\node(d) at (0,2){\textcolor{blue}{$\bullet$}};
\node(f3) at (3,2){\textcolor{red}{$\bullet$}};
\node(f4) at (2,3){\textcolor{red}{$\bullet$}};
\node(f5) at (1,4){\textcolor{red}{$\bullet$}};
\node(f6) at (0,5){\textcolor{red}{$\bullet$}};
\node(arranger le bas) at (2.5,-.6){};
\end{tikzpicture}
\caption{ $\A(NEN)=13$}
\label{Fig : aire}
\end{minipage}
\begin{minipage}{5.75cm}
\centering
\begin{tikzpicture}[scale=.5]
\node(fin) at (5.5,6){};
\draw[thin, gray] (0,0)--(0,5)--(1,5)--(1,4)--(2,4)--(2,3)--(3,3)--(3,2)--(4,2)--(4,1)--(5,1)--(5,0)--(0,0);
\draw[thin, gray] (1,0)--(1,4);
\draw[thin, gray] (2,0)--(2,3);
\draw[thin, gray] (3,0)--(3,2);
\draw[thin, gray] (4,0)--(4,1);
\draw[thin, gray] (0,1)--(4,1);
\draw[thin, gray] (0,2)--(3,2);
\draw[thin, gray] (0,3)--(2,3);
\draw[thin, gray] (0,4)--(1,4);
\draw[blue, thick] (0,2)--(0,3)--(1,3)--(1,4);
\node(d) at (0,2){\textcolor{blue}{$\bullet$}};
\node(deb) at (-1.5,2){};
\node(f3) at (3,2){\textcolor{red}{$\bullet$}};
\node(f4) at (2,3){\textcolor{red}{$\bullet$}};
\node(f5) at (1,4){\textcolor{red}{$\bullet$}};
\node(f6) at (0,5){\textcolor{red}{$\bullet$}};
\draw[dotted] (1,4)--(6,4);
\draw[<->] (6,0)--(6,4);
\node(h) at (7.5,2){$ht(\gamma)$};
\node(arranger le bas) at (2.5,-.6){};
\end{tikzpicture}
\caption{ $\h(NEN)=4$}
\label{Fig : ht}
\end{minipage}
\end{figure}

Many known objects are in bijection (compositions, words, self conjugate partitions, partitions with distinct parts, partitions with distinct odd parts, $321$ and $231$ avoiding permutations) with $T_{n,s}$, a curious reader could see \cite{[Wal2019c]}.  Note that the image of $T_{n,s}$  through These bijections preserve many statistics on the associated objects and in some cases they can
be used to refine the sets. It also contains another proof of  the Lagrange convolution using $T_{n,s}$.

This new object has the following generating function.

\begin{prop}\label{Prop : T_n}Let $T_{n,s}(q,z)=\sum_{\gamma\in T_{n,s}} q^{\A(\gamma)}z^{\h(\gamma)}$. Then for $r=n-s-2$ we have:
\begin{equation}\label{Eq : Tr0->Tns}
  T_{n,s}(q,z)=\sum_{j=0}^{r} q^{\binom{s+j+1}{2}+s(r-j)}\begin{bmatrix}r \\ j\end{bmatrix}_q z^{j+s}=T_{r+2,0}(q,z)z^{s}q^{rs+\binom{s+1}{2}}
 \end{equation}
In particular if $s=0$ we have:
\begin{equation}\label{Eq : Tn(q,z)}
 T_{n,0}(q,z)=\sum_{j=0}^{n-2} q^{\binom{j+1}{2}}\begin{bmatrix}n-2 \\ j\end{bmatrix}_q z^{j}=\sum_{k=0}^{n-2} q^{\binom{n-k-1}{2}}\begin{bmatrix}n-2 \\ n-2-k\end{bmatrix}_q z^{n-2-k}=(-qz;q)_{n-2}
 \end{equation}
\end{prop}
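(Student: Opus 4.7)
I would prove the three formulas by splitting the sum defining $T_{n,s}(q,z)$ according to the endpoint of the path. A path $\gamma\in T_{n,s}$ is a NE lattice path from $(0,s)$ to some lattice point on the anti-diagonal $x+y=n-2$; if it has $j\in\{0,1,\ldots,r\}$ north steps, its endpoint is $(r-j,s+j)$ and its height equals $s+j$. Since $(r-j)+(s+j)=r+s=n-2$, the enclosing rectangle $[0,r-j]\times[s,s+j]$ lies entirely in the staircase, so no boundary constraint truncates the family of paths ending at $(r-j,s+j)$ — there are simply $\binom{r}{j}$ of them.

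The factor $z^{\h(\gamma)}$ is $z^{s+j}$ and depends only on $j$. For the $q$-weight I would decompose the boxes south-east of $\gamma$ into three disjoint pieces: (i) the bottom strip of all staircase boxes at height $<s$, contributing the path-independent count $sr+\binom{s+1}{2}$; (ii) the triangular region of staircase boxes strictly east of $x=r-j$ and at height $\geq s$, whose cardinality is $\binom{j+1}{2}$ (a direct count: at height $s+k$, $k=0,\ldots,j-1$, there are $j-k$ such boxes); and (iii) the boxes inside the rectangle $[0,r-j]\times[s,s+j]$ that lie below $\gamma$, whose generating function over NE paths in the rectangle is the classical $q$-binomial $\begin{bmatrix}r\\j\end{bmatrix}_q$. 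Summing over $j$ yields
\begin{equation*}
T_{n,s}(q,z)=\sum_{j=0}^{r} q^{\,sr+\binom{s+1}{2}+\binom{j+1}{2}}\begin{bmatrix}r\\j\end{bmatrix}_q z^{s+j},
\end{equation*}
and a one-line algebraic check $sr+\binom{s+1}{2}+\binom{j+1}{2}=\binom{s+j+1}{2}+s(r-j)$ matches the first form in \eqref{Eq : Tr0->Tns}. Factoring out $z^s q^{sr+\binom{s+1}{2}}$ leaves exactly the $s=0$ specialization, which establishes the second equality $T_{n,s}(q,z)=T_{r+2,0}(q,z)\,z^s q^{sr+\binom{s+1}{2}}$.

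For \eqref{Eq : Tn(q,z)}, specializing to $s=0$ gives $T_{n,0}(q,z)=\sum_{j=0}^{n-2}q^{\binom{j+1}{2}}\begin{bmatrix}n-2\\j\end{bmatrix}_q z^j$; the reindexing $k=n-2-j$ together with the $q$-binomial symmetry $\begin{bmatrix}n-2\\j\end{bmatrix}_q=\begin{bmatrix}n-2\\n-2-k\end{bmatrix}_q$ produces the second displayed form. The identification with the $q$-Pochhammer symbol $(-qz;q)_{n-2}=\prod_{i=1}^{n-2}(1+zq^i)$ is then the classical Rothe/$q$-binomial identity: substituting $u=qz$ into $\prod_{i=0}^{m-1}(1+uq^i)=\sum_{j=0}^{m}q^{\binom{j}{2}}\begin{bmatrix}m\\j\end{bmatrix}_q u^j$ with $m=n-2$ reproduces the series above verbatim.

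\textbf{Main obstacle.} The only step that is not bookkeeping is the area decomposition: one has to verify that the bottom strip, the right-triangle east of $(r-j,s)$, and the sub-path region in the bounding rectangle partition exactly those staircase boxes lying south-east of $\gamma$, with no double counting or omission. Once this geometric decomposition is accepted, the rest reduces to the standard $q$-binomial count of paths in a rectangle and to Rothe's identity.
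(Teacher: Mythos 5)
Your proof is correct and takes essentially the same approach as the paper: split the paths by their endpoint (equivalently their height), identify the area inside the bounding rectangle with the Gaussian binomial $\left[\begin{smallmatrix}r\\ j\end{smallmatrix}\right]_q$, the staircase triangle to the east with $\binom{j+1}{2}$, and finish with the $q$-binomial theorem. The only cosmetic difference is that you handle general $s$ directly through the bottom-strip contribution $sr+\binom{s+1}{2}$, whereas the paper proves the $s=0$ case first and then transfers to $T_{n,s}$ via a bijection with $T_{n-s,0}$ that shifts the height by $s$ and the area by the same quantity $(n-2-s)s+\binom{s+1}{2}$.
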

Note that by our choice of convention when $s\geq n-2$ we have $T_{n,s}(q,z)=q^{\binom{n-1}{2}}z^{n-2}$.
\begin{proof}Starting with $T_{n,0}(q,z)$, we only need to prove the first equality since the second equality is the change of variables $k=n-2-j$ and the last equality is the well know $q$-binomial theorem. The paths ending at height $j$ are the paths that fit in a $j\times (n-j-2)$ grid. It is known that the $q$-analogue of these paths with its respective area statistic are the gaussian $q$-binomial, $\begin{bmatrix}n-2 \\ j\end{bmatrix}_q$. This leaves us with a staircase of height $j$ which contains ${\binom{j+1}{2}}$ boxes. In consequence the  coefficient of $z^j$ is $q^{\binom{j+1}{2}}\begin{bmatrix}n-2 \\ j\end{bmatrix}_q$. 

For  $T_{n,s}$ we need only to notice that there is a natural bijection with the paths of $T_{n-s,0}$. The only difference is the statistics. The height statistic is exactly $s$ greater in $T_{n,s}$ and the area statistic is exactly $(n-2-s)s+\binom{s+1}{2}$ in $T_{n,s}$. Hence, Equation \eqref{Eq : Tr0->Tns}.
\end{proof}

We will see that the reason $T_{n,s}(q,z)$ is useful to transform the alternating sums, induced in  Lemma \ref{Lem : altern}, into a positive sum is related to the exponents of $q$ in Equation \eqref{Eq : Tr0->Tns}. The exponents have the following property.

\begin{cl}Let $c$ be an integer and $g_j:\mathbb{N}\rightarrow \mathbb{Z}$ be maps such that $g_j(k)-g_j(k-1)=k+j+c$ for all $k\geq 1$. Then $g_j(k)=k(j+c)+\binom{k+1}{2}+g_j(0)$ for all $j\geq 0$ and all $k\geq 1$.
\end{cl}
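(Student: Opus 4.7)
The plan is essentially to telescope the one-term recurrence. The statement is really a two-line computation once one recognizes that the hypothesis is a first-order linear recurrence in the variable $k$, with the parameter $j$ playing no active role beyond being part of the constant-in-$k$ slope $j+c$.

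First I would fix an arbitrary $j \geq 0$, so that the hypothesis becomes the scalar recurrence $g_j(k)-g_j(k-1) = k + (j+c)$ for every $k\geq 1$. Then I would telescope: for $k\geq 1$,
\begin{equation*}
g_j(k) - g_j(0) \;=\; \sum_{i=1}^{k}\bigl(g_j(i)-g_j(i-1)\bigr) \;=\; \sum_{i=1}^{k}\bigl(i + (j+c)\bigr).
\end{equation*}
Splitting the right-hand side into $\sum_{i=1}^{k} i = \binom{k+1}{2}$ and $\sum_{i=1}^{k}(j+c) = k(j+c)$ gives
\begin{equation*}
g_j(k) \;=\; g_j(0) + \binom{k+1}{2} + k(j+c),
\end{equation*}
which is the claimed formula.

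Alternatively I would phrase the same argument as induction on $k$: the base case $k=1$ reads $g_j(1) = g_j(0) + 1 + (j+c) = g_j(0) + \binom{2}{2} + 1\cdot(j+c)$, matching the formula; and assuming the formula for $k-1$, the inductive step uses the hypothesis to write $g_j(k) = g_j(k-1) + k + (j+c) = g_j(0) + \binom{k}{2} + (k-1)(j+c) + k + (j+c)$, which simplifies to $g_j(0) + \binom{k+1}{2} + k(j+c)$ via Pascal's identity $\binom{k}{2}+k = \binom{k+1}{2}$.

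There is no real obstacle here: the claim is a routine discrete integration of a known linear difference, and its role in the paper is to package a recurrence one might encounter when manipulating the exponents of $q$ appearing in the expansion \eqref{Eq : Tr0->Tns} of $T_{n,s}(q,z)$. The only thing to check carefully is the range of validity ($k \geq 1$ is the nontrivial range, while $k=0$ holds trivially from the formula), and the choice of binomial convention $\binom{k+1}{2} = \tfrac{k(k+1)}{2}$ to match the summation $1+2+\cdots+k$.
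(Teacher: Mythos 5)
Your proof is correct and follows essentially the same route as the paper, which also proceeds by induction on $k$ from $g_j(1)=j+1+c+g_j(0)$ and the identity $\binom{k}{2}+k=\binom{k+1}{2}$; your telescoping sum is just the closed-form version of that induction. Nothing is missing.
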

\begin{proof}By definition of the maps we know that $g_j(1)=j+1+c+g_j(0)$. Since  $g_j(k)-g_j(k-1)=k+j+c$ by induction we have $g_j(k)=k+j+c+(k-1)(j+c)+\binom{k}{2}+g_j(0)$ which completes the proof.
\end{proof}

The following result shows how this object is used to eliminate an alternating sum and make the relevant formula positive.

\begin{prop}\label{Prop : T_n=sum}Let $g_j:\mathbb{N}\rightarrow \mathbb{Z}$ be such that $g_j(k)-g_j(k-1)=k+j$ for all $k\geq 1$. Then:
\begin{equation}\label{Eq : alt->pos}\sum_{k=0}^{n-j-1}(-1)^k\begin{bmatrix}n-1\\j+k\end{bmatrix}_qq^{-k+g_j(k)}=\begin{bmatrix}n-2\\j-1\end{bmatrix}_qq^{g_j(0)}\end{equation}
 and: 
 \begin{equation}\label{Eq : nulle} \sum_{k=0}^{n-1}(-1)^k\begin{bmatrix}n-1\\k\end{bmatrix}_qq^{-k+g_0(k)}=0.
\end{equation}

Moreover, if $g_{j}(0)-\binom{j}{2}=g_{i}(0)-\binom{i}{2}$ for all $i,j$, then for any $j$ we have:
\begin{equation}\label{Eq : somme alt->pos}\sum_{j=1}^{n-1}\sum_{k=0}^{n-j-1}(-1)^k\begin{bmatrix}n-1\\j+k\end{bmatrix}_qq^{-k+g_j(k)}z^{j-1}=T_{n,0}(q,z)q^{g_{j}(0)-\binom{j}{2}}, 
\end{equation}
and if $g_{j}(0)-\binom{j+1}{2}=g_{i}(0)-\binom{i+1}{2}$ for all $i,j$, then for any $j$ we have:
\begin{equation}\label{Eq : somme alt->pos air+ht}\sum_{j=1}^{n-1}\sum_{k=0}^{n-j-1}(-1)^k\begin{bmatrix}n-1\\j+k\end{bmatrix}_qq^{-k+g_j(k)}z^{j-1}=T_{n,0}(q,qz)q^{g_{j}(0)-\binom{j+1}{2}+1}.
\end{equation}
\end{prop}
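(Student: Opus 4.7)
The strategy is to isolate the elementary $q$-identity \eqref{Eq : alt->pos}, deduce \eqref{Eq : nulle} as its degenerate case, and then read off the generating-function identities \eqref{Eq : somme alt->pos} and \eqref{Eq : somme alt->pos air+ht} by substitution into the explicit form of $T_{n,0}(q,z)$ furnished by Proposition \ref{Prop : T_n}.

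First I would invoke the preceding Claim (with $c=0$) to write $g_j(k)=kj+\binom{k+1}{2}+g_j(0)$. Substituting this into \eqref{Eq : alt->pos} and pulling out $q^{g_j(0)}$ reduces the statement to a purely $q$-binomial identity. A reindexing $i=j+k$ combined with the elementary identity
\[
\binom{i-j}{2}+j(i-j)=\binom{i}{2}-\binom{j}{2}
\]
converts the sum into the partial alternating $q$-sum
\[
\sum_{i=j}^{n-1}(-1)^i \begin{bmatrix}n-1\\i\end{bmatrix}_q q^{\binom{i}{2}} \;=\; (-1)^j q^{\binom{j}{2}} \begin{bmatrix}n-2\\j-1\end{bmatrix}_q.
\]
I would prove this by induction on $j$. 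The base case $j=1$ is exactly the $q$-binomial theorem $\sum_{i=0}^{n-1}(-1)^i q^{\binom{i}{2}}\begin{bmatrix}n-1\\i\end{bmatrix}_q=(1;q)_{n-1}=0$ for $n\geq 2$, and this same vanishing yields \eqref{Eq : nulle} directly upon pulling out $q^{g_0(0)}$. The inductive step is a single application of the $q$-Pascal recurrence $\begin{bmatrix}n-1\\j\end{bmatrix}_q=\begin{bmatrix}n-2\\j-1\end{bmatrix}_q+q^j\begin{bmatrix}n-2\\j\end{bmatrix}_q$, where the exponent-shift $\binom{j+1}{2}-\binom{j}{2}=j$ matches perfectly.

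Once \eqref{Eq : alt->pos} is established, both inner sums in \eqref{Eq : somme alt->pos} and \eqref{Eq : somme alt->pos air+ht} collapse to $\begin{bmatrix}n-2\\j-1\end{bmatrix}_q q^{g_j(0)}$, and the remaining outer sum is handled by the change of index $\ell=j-1$. Under the hypothesis of \eqref{Eq : somme alt->pos} we may write $g_j(0)=\binom{j}{2}+c$ for a common constant $c$; since $\binom{j}{2}=\binom{\ell+1}{2}$, the resulting sum is exactly $q^c\sum_{\ell=0}^{n-2}\begin{bmatrix}n-2\\\ell\end{bmatrix}_q q^{\binom{\ell+1}{2}}z^\ell=q^c T_{n,0}(q,z)$ by \eqref{Eq : Tn(q,z)}. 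Under the hypothesis of \eqref{Eq : somme alt->pos air+ht} one has instead $g_j(0)=\binom{j+1}{2}+c=\binom{\ell+1}{2}+\ell+1+c$; the extra factor $q^{\ell}$ combines with $z^\ell$ to give $(qz)^\ell$, and a leftover $q$ yields $q^{c+1}T_{n,0}(q,qz)$.

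The principal obstacle is the alternating $q$-identity behind \eqref{Eq : alt->pos}; the rest is bookkeeping made transparent by the factorization of $T_{n,0}(q,z)$. The clever step is the reindexing $i=j+k$ together with the binomial rearrangement $\binom{i-j}{2}+j(i-j)=\binom{i}{2}-\binom{j}{2}$, which trades a shifted alternating sum carrying a $q^{kj}$ twist for a pure partial sum of $q$-binomial coefficients that is accessible from the $q$-binomial theorem via a one-line induction.
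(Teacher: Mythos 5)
Your proposal is correct, but it proves the key identity \eqref{Eq : alt->pos} by a genuinely different route than the paper. After the common first step (using the preceding Claim with $c=0$ to write $g_j(k)=kj+\binom{k+1}{2}+g_j(0)$ and factor out $q^{g_j(0)}$), the paper stays combinatorial: it introduces the skewed generating function $\hat T_{n+1,0}^{(j)}(q,z)$, decomposes each path of $T_{n+1,0}$ according to the first point where it meets the line $y=j$ (a concatenation of a path of $\mathcal{C}_{j-1}^{n-2-l}$ with a path of $T_{l+2,0}$), and then kills all terms with $l>0$ via the specialization $T_{l+2,0}(q,-q^{-1})=\prod_{i=0}^{l-1}(1-q^i)=0$, which is where the $q$-binomial theorem enters; \eqref{Eq : nulle} and \eqref{Eq : alt->pos} drop out of the surviving $l=0$ terms. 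You instead reindex $i=j+k$, use $\binom{i-j}{2}+j(i-j)=\binom{i}{2}-\binom{j}{2}$ to reduce everything to the partial alternating sum
\begin{equation*}
\sum_{i=j}^{n-1}(-1)^i\begin{bmatrix}n-1\\ i\end{bmatrix}_q q^{\binom{i}{2}}=(-1)^jq^{\binom{j}{2}}\begin{bmatrix}n-2\\ j-1\end{bmatrix}_q,
\end{equation*}
and prove it by induction on $j$ from the $q$-Pascal recurrence $\begin{bmatrix}n-1\\ j\end{bmatrix}_q=\begin{bmatrix}n-2\\ j-1\end{bmatrix}_q+q^j\begin{bmatrix}n-2\\ j\end{bmatrix}_q$, the full vanishing sum giving \eqref{Eq : nulle}; I checked the induction step and the exponent bookkeeping, and they are sound, as is your derivation of \eqref{Eq : somme alt->pos} and \eqref{Eq : somme alt->pos air+ht} by summing the collapsed inner sums against the explicit expansion of $T_{n,0}(q,z)$ from Proposition \ref{Prop : T_n}. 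What each approach buys: yours is shorter, purely algebraic and self-contained (only $q$-Pascal and the $q$-binomial theorem), whereas the paper's path decomposition, though longer, exhibits the cancellation bijectively on the objects $T_{n,s}$ themselves — consistent with the paper's stated purpose of explaining why these paths eliminate the alternating sum — and that combinatorial picture (which steps survive, with which area/height) is reused in the intuition preceding the proof and in later sections.
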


Before we give the proof we will give a combinatorial intuition based on the case $g_{j}(k)=\binom{j+k+1}{2}$. Notice that by the previous proposition we only need to prove the second part. For some fixed $k$ and $j$ we can represent the term $\left[{n-1\atop j+k}\right]_qq^{-k+g_j(k)}$ by the set of paths in $\mathcal{C}^{n-1}_{j+k}$ to which we add $g_j(k)$ boxes and subtract $k$ boxes (see Figure \ref{Fig : g_j(k)}).
There is a bijection between  paths ending with a north step in $\mathcal{C}^{n-1}_{j+k+1}$ (blue in Figure \ref{Fig : boxes}) and paths ending with an east step in $\mathcal{C}^{n-1}_{j+k}$ (red in  Figure \ref{Fig : boxes}). We only need to change the last step. This is an involution and they both account for the same number of boxes (see Figure \ref{Fig : boxes} as an example). Since the terms have coefficient $(-1)^k$ they cancel out pairwise in the sum. So the only steps left are the ones when $k=0$ and the path in $\mathcal{C}^{n-1}_{j}$ ends with a north step. Eliminating the last north step doesn't affect the area because there are no east steps afterwards. Therefore, we can consider the paths in $\mathcal{C}^{n-2}_{j-1}$ with the same statistic, which is what we needed. Note that for $j=0$ there is no path in $\mathcal{C}_0^{n-1}$ that ends with a north step.

\begin{figure}[h!]
\begin{minipage}{8.25cm}
\centering
\begin{tikzpicture}[scale=.5]
\filldraw[gray!50] (0,4)--(0,6)--(1,6)--(1,5)--(2,5)--(2,4)--(0,4);
\filldraw[pink] (2,0)--(2,4)--(3,4)--(3,3)--(4,3)--(4,2)--(5,2)--(5,1)--(6,1)--(6,0)--(2,0);
\draw[thin, gray] (0,0)--(0,6)--(1,6)--(1,5)--(2,5)--(2,4)--(3,4)--(3,3)--(4,3)--(4,2)--(5,2)--(5,1)--(6,1)--(6,0)--(0,0);
\draw[thin, gray] (1,0)--(1,5);
\draw[thin, gray] (2,0)--(2,4);
\draw[thin, gray] (3,0)--(3,3);
\draw[thin, gray] (4,0)--(4,2);
\draw[thin, gray] (5,0)--(5,1);
\draw[thin, gray] (0,1)--(5,1);
\draw[thin, gray] (0,2)--(4,2);
\draw[thin, gray] (0,3)--(3,3);
\draw[thin, gray] (0,4)--(2,4);
\draw[thin, gray] (0,5)--(1,5);
\draw[red] (0,0)--(2,0)--(2,4)--(0,4)--(0,0);
\node(d) at (0,0){\textcolor{black}{$\bullet$}};
\node(f4) at (2,4){\textcolor{red}{$\bullet$}};
\node(k) at (-1,3){$k$};
\draw[<->] (-.5,2)--(-.5,4);
\node(p) at (-1,1){$j$};
\draw[<->] (-.5,0)--(-.5,2);
\draw[<->] (0,-.5)--(6,-.5);
\draw (-1,2)--(6,2);
\node(n) at (3,-1){$n-1$};
\node(x1) at (3.5,2.5){X};
\node(x2) at (2.5,3.5){X};
\node(g) at (5,4){\textcolor{red!55}{$g_{j}(k)$}};
\draw[->] (g)--(3.75,1.5);
\end{tikzpicture}

\caption{Representation of the term $\left[{n-1 \atop j+k}\right]_qq^{-k+g_j(k)}$ in the Equation \eqref{Eq : alt->pos}}
\label{Fig : g_j(k)}
\end{minipage}
\begin{minipage}{8.25cm}
\centering
\begin{tikzpicture}[scale=.5]
\filldraw[blue!20] (1,0)--(2,0)--(2,5)--(1,5)--(1,0);
\filldraw[pink] (2,0)--(2,4)--(3,4)--(3,3)--(4,3)--(4,2)--(5,2)--(5,1)--(6,1)--(6,0)--(2,0);
\draw[thin, gray] (0,0)--(0,6)--(1,6)--(1,5)--(2,5)--(2,4)--(3,4)--(3,3)--(4,3)--(4,2)--(5,2)--(5,1)--(6,1)--(6,0)--(0,0);
\draw[thin, gray] (1,0)--(1,5);
\draw[thin, gray] (2,0)--(2,4);
\draw[thin, gray] (3,0)--(3,3);
\draw[thin, gray] (4,0)--(4,2);
\draw[thin, gray] (5,0)--(5,1);
\draw[thin, gray] (0,1)--(5,1);
\draw[thin, gray] (0,2)--(4,2);
\draw[thin, gray] (0,3)--(3,3);
\draw[thin, gray] (0,4)--(2,4);
\draw[thin, gray] (0,5)--(1,5);
\node(d) at (0,0){\textcolor{black}{$\bullet$}};
\node(f4) at (2,4){\textcolor{red}{$\bullet$}};
\draw[very thick,red] (1,4)--(2,4);
\draw[very thick,blue] (1,4)--(1,5);
\node(f5) at (1,5){\textcolor{blue}{$\bullet$}};
\node(k) at (-1,3){$k$};
\draw[<->] (-.5,2)--(-.5,4);
\node(p) at (-1,1){$j$};
\draw[<->] (-.5,0)--(-.5,2);
\draw (-1,2)--(6,2);
\node(x1) at (1.5,4.5){\textcolor{gray}{X}};
\node(x2) at (2.5,3.5){X};
\node(x3) at (3.5,2.5){X};
\end{tikzpicture}

\caption{Comparing path ending with a North step in $\mathcal{C}^{n-1}_{j+k+1}$ and with an east step in $\mathcal{C}^{n-1}_{j+k}$}
\label{Fig : boxes}
\end{minipage}
\end{figure}

\begin{proof}Since $\binom{j+k+1}{2}-\binom{j+1}{2}=\binom{k+1}{2}+kj$,  by the previous claim we can rewrite the left hand of Equation \eqref{Eq : alt->pos}:
\begin{equation}\label{Eq : alt->pos pre}q^{g_j(0)-\binom{j+1}{2}}\sum_{k=0}^{n-j-1}(-1)^k\begin{bmatrix}n-1\\j+k\end{bmatrix}_qq^{-k+\binom{j+k+1}{2}}
\end{equation}
Let us consider each path of $T_{n+1,0}$ that end over the line of equation $y=j$ and weight them differently:
\begin{equation*}
\hat T_{n+1,0}^{(j)}(q,z):=\underset{ \h(\gamma)\geq j}{\sum_{\gamma\in T_{n+1,0}}} (-qz)^{j-\h(\gamma)}q^{\A(\gamma)}z^{\h(\gamma)}
\end{equation*}
Using the same proof as in Proposition \ref{Prop : T_n} we get:
 \begin{equation*}
 \hat{T}_{n+1,0}^{(j)}(q,z)=\sum_{k=0}^{n-j-1}(-1)^{k} q^{\binom{j+k+1}{2}-k}\begin{bmatrix}n-1 \\ j+k\end{bmatrix}_q z^{j}.
 \end{equation*}
 Notice that Equation \eqref{Eq : alt->pos pre} is equal to $q^{g_j(0)-\binom{j+1}{2}} \hat{T}_{n+1,0}^{(j)}(q,z)z^{-j}$.

Because each paths cross the line $y=j$ we can consider all the paths that touch the line $y=j$ for the first time at $(n-1-j-l,j)$. These end to the north east of $(n-1-j-l,j)$ to any end point among the set $\{ (n-1-j-l+k,j+l-k)~|~0\leq k\leq l\}$. But the set of paths starting at $(n-1-j-l,j)$ end ending at  $\{ (n-1-j-l+k,j+l-k)~|~0\leq k\leq l\}$ are the paths of $T_{l+2,0}$ translated by $(n-1-j-l,j)$. Furthermore the paths of height $l-k$ in $T_{l+2,0}$ translated by $(n-1-j-l,j)$ end at $(n-1-j-l+k,j+l-k)$ ergo the related monomials are skewed by $(-qz)^{-l+k}$, in $\hat T_{n+1,0}^{(j)}(q,z)$. Therefore, if we consider the part of the path over the line $y=j$ we see that it is weighted by $\hat T_{l+2,0}^{(0)}(q,z)$. Take note that $\hat T_{l+2,0}^{(0)}(q,z)$ is a polynomial in $q$ and not in $q$ and $z$ since $l-k$ is the height of the path. 

\begin{figure}[h!]
\begin{minipage}{9cm}
\centering
\begin{tikzpicture}[scale=.5]
\filldraw[gray!50] (0,0)--(0,6)--(1,6)--(1,5)--(2,5)--(2,0)--(0,0);
\filldraw[pink] (2,1)--(2,-2)--(8,-2)--(8,-1)--(7,-1)--(7,0)--(6,0)--(6,1)--(2,1);
\draw[thin, gray] (0,-2)--(0,6)--(1,6)--(1,5)--(2,5)--(2,4)--(3,4)--(3,3)--(4,3)--(4,2)--(5,2)--(5,1)--(6,1)--(6,0)--(7,0)--(7,-1)--(8,-1)--(8,-2)--(0,-2);
\draw[thin, gray] (1,-2)--(1,5);
\draw[thin, gray] (2,-2)--(2,4);
\draw[thin, gray] (3,-2)--(3,3);
\draw[thin, gray] (4,-2)--(4,2);
\draw[thin, gray] (5,-2)--(5,1);
\draw[thin, gray] (6,-2)--(6,0);
\draw[thin, gray] (7,-2)--(7,-1);
\draw[thin, gray] (0,-1)--(7,-1);
\draw[thin, gray] (0,0)--(6,0);
\draw[thin, gray] (0,1)--(5,1);
\draw[thin, gray] (0,2)--(4,2);
\draw[thin, gray] (0,3)--(3,3);
\draw[thin, gray] (0,4)--(2,4);
\draw[thin, gray] (0,5)--(1,5);
\draw[thick, blue] (2,0)--(2,-2)--(0,-2)--(0,0)--(2,0);
\draw[thick, red] (2,1)--(2,4)--(3,4)--(3,3)--(4,3)--(4,2)--(5,2)--(5,1)--(2,1);
\draw (2,0)--(2,1);
\node(dr) at (0,-2){\textcolor{blue}{$\bullet$}};
\node(fr) at (2,0){\textcolor{blue}{$\bullet$}};
\node(db) at (2,1){\textcolor{red}{$\bullet$}};
\node(f1) at (2,4){\textcolor{red}{$\bullet$}};
\node(f2) at (3,3){\textcolor{red}{$\bullet$}};
\node(f3) at (4,2){\textcolor{red}{$\bullet$}};
\node(f4) at (5,1){\textcolor{red}{$\bullet$}};
\node(l) at (3.5,0){$l$};
\draw[<->] (2,0.5)--(5,0.5);
\node(T) at (-3.5,2){$(n-1-j-l,j)$};
\draw[->] (T)--(db);
\node(p) at (-1,-.5){$j$};
\draw[<->] (-.5,-2)--(-.5,1);
\draw[<->] (0,-2.5)--(8,-2.5);
\draw [thin] (-1,1)--(7,1);
\node(n) at (4,-3){$n-1$};
\node(T) at (5,4){\textcolor{red}{$T_{l+2,0}$}};
\draw[red!55,->] (T)--(3.75,1.5);
\node(bin) at (10,-.5){\textcolor{red!55}{$\binom{j+1}{2}+jl$}};
\draw[red!55,->] (bin)--(5.25,-.75);
\node(C) at (-3,-1){\textcolor{blue!55}{$\mathcal{C}_{j-1}^{n-2-l}$}};
\draw[blue!55,->] (C)--(.75,-1.25);
\end{tikzpicture}
\caption{Concatenation of a path of $T_{l+2,0}$ and a path of $\mathcal{C}_{j-1}^{n-2-l}$}
\label{Fig : hat T_n}
\end{minipage}\begin{minipage}{9cm}
\centering
\begin{tikzpicture}[scale=.5]
\filldraw[gray!50] (0,0)--(0,6)--(1,6)--(1,5)--(2,5)--(2,4)--(3,4)--(3,3)--(4,3)--(4,2)--(5,2)--(5,1)--(5,0)--(0,0);
\filldraw[pink] (5,-2)--(8,-2)--(8,-1)--(7,-1)--(7,0)--(6,0)--(6,1)--(5,1)--(5,-2);
\draw[thin, gray] (0,-2)--(0,6)--(1,6)--(1,5)--(2,5)--(2,4)--(3,4)--(3,3)--(4,3)--(4,2)--(5,2)--(5,1)--(6,1)--(6,0)--(7,0)--(7,-1)--(8,-1)--(8,-2)--(0,-2);
\draw[thin, gray] (1,-2)--(1,5);
\draw[thin, gray] (2,-2)--(2,4);
\draw[thin, gray] (3,-2)--(3,3);
\draw[thin, gray] (4,-2)--(4,2);
\draw[thin, gray] (5,-2)--(5,1);
\draw[thin, gray] (6,-2)--(6,0);
\draw[thin, gray] (7,-2)--(7,-1);
\draw[thin, gray] (0,-1)--(7,-1);
\draw[thin, gray] (0,0)--(6,0);
\draw[thin, gray] (0,1)--(5,1);
\draw[thin, gray] (0,2)--(4,2);
\draw[thin, gray] (0,3)--(3,3);
\draw[thin, gray] (0,4)--(2,4);
\draw[thin, gray] (0,5)--(1,5);
\draw[thick, blue] (5,0)--(5,-2)--(0,-2)--(0,0)--(5,0)--(5,1);
\node(dr) at (0,-2){\textcolor{blue}{$\bullet$}};
\node(fr) at (5,1){\textcolor{blue}{$\bullet$}};
\node(p) at (-1,-.5){$j$};
\draw[<->] (-.5,-2)--(-.5,1);
\draw[<->] (0,-2.5)--(8,-2.5);
\draw [thin] (-1,1)--(7,1);
\node(n) at (4,-3){$n-1$};
\node(bin) at (10,-.5){\textcolor{red!55}{$\binom{j+1}{2}$}};
\draw[red!55,->] (bin)--(6.4,-1.25);
\node(C) at (-3,-1){\textcolor{blue!55}{$\mathcal{C}_{j-1}^{n-2}$}};
\draw[blue!55,->] (C)--(.75,-1.25);
\end{tikzpicture}
\caption{The paths that are not cancelled out for $j$ (when $l=0$)}
\label{Fig : hat T_n l=0}
\end{minipage}
\end{figure}

Furthermore, the paths of $T_{n+1,0}$ crossing at $(n-1-l-j,j)$ are the concatenation of a path of $T_{l+2,0}$ and a path of $\mathcal{C}_{j-1}^{n-2-l}$, since the paths contain the north step that starts at  $(n-1-l-j,j-1)$ (see Figure \ref{Fig : hat T_n}). 
Hence, Equation \eqref{Eq : alt->pos pre} is equivalent to:
\begin{equation}\label{Eq : avec hat T} q^{g_j(0)-\binom{j+1}{2}}\sum_{l=0}^{n-j-1}\begin{bmatrix}n-2-l\\j-1\end{bmatrix}_q\hat T_{l+2,0}^{(0)}(q,z) q^{jl+\binom{j+1}{2}}
\end{equation}

Now by Proposition \ref{Prop : T_n} we have:
\begin{equation}\label{Eq : Tl+2} T_{l+2,0}(q,-zq^{-1})=\sum_{k=0}^{l} q^{\binom{l-k+1}{2}}\begin{bmatrix}l \\ l-k\end{bmatrix}_q z^{l-k}(-q)^{-l+k}
\end{equation}
 and by definition of our skewed sum we have:
 \begin{equation}\label{Eq : hatTl+2} \hat T_{l+2,0}^{(0)}(q,z)=\sum_{k=0}^{l} (-1)^{l-k}q^{\binom{l-k+1}{2}-l+k}\begin{bmatrix}l \\ l-k\end{bmatrix}_q z^{0}
 \end{equation}
  therefore comparing Equation \eqref{Eq : Tl+2} and Equation \eqref{Eq : hatTl+2} we get $\hat T_{l+2,0}^{(0)}(q,1)=T_{l+2,0}(q,-q^{-1}).$

By Proposition \ref{Prop : T_n} $T_{l+2,0}(q,-zq^{-1})=\prod_{i=0}^{l-1}(1-zq^i)$ thus $T_{l+2,0}(q,-q^{-1})=0$ if $l>0$ and $1$ if $l=0$. The equality $\hat T_{l+2,0}^{(0)}(q,1)=\hat T_{l+2,0}^{(0)}(q,z)$ means it is equivalent to state that $\hat T_{l+2,0}^{(0)}(q,z)=0$ unless $l=0$. This proves Equation \eqref{Eq : nulle}.

Replacing this in Equation \eqref{Eq : avec hat T} gives us Equation \eqref{Eq : alt->pos} (see Figure \ref{Fig : hat T_n l=0}). Equation \eqref{Eq : somme alt->pos} (respectively Equation \eqref{Eq : somme alt->pos air+ht})   follows from $g_j(0)-\binom{j}{2}=g_i(0)-\binom{i}{2}$ (respectively $g_j(0)-\binom{j+1}{2}=g_i(0)-\binom{i+1}{2}$) for all $i,j$ and Proposition \ref{Prop : T_n}.
\end{proof}

Notice that Equation \eqref{Eq : nulle} means that we can change the range of Proposition \ref{Prop : T_n=sum} for $0\leq j \leq n-1$ and obtain the same result.

We will see in the next section how to use this to get the formula for the hook components of $\langle \mathcal{E}_{rn,n}, e_n\rangle$ and of $\langle \mathcal{E}_{n,n}, s_{\mu}\rangle$. 

 %%%%%%%%%%%%%%%%%%%%%%%%%%%%%%%%%%%%%%%%%%%%%%%
 %%%%%%%%%%%%%%%%%		Schur positive explicite combinatorial Formula
  %%%%%%%%%%%%%%%%%%%%%%%%%%%%%%%%%%%%%%%%%%%%%%%
  
 \section{Schur positive explicit combinatorial Formula}\label{Sec : main}

We can now prove Theorem \ref{Thm : main} piece by piece.

Figure \ref{Fig : ex E_4} gives an example for $r=1$ and $n=4$. One might want to take note that for $n\leq4$ there are only hooks thus $\langle \mathcal{E}_{4,4}, e_4\rangle|_{hooks}=\langle \mathcal{E}_{4,4}, e_4\rangle$. This is why we can state in the introduction that  the equation of theorem 3.2.5 in \cite{[BCP2018]} is a specialization of Equation \eqref{Eq : main} using a different basis.

\begin{figure}[h!]
\centering
\begin{tikzpicture}[scale=.5]
\node(w) at (-5.5,1){The elements of $T_{2,0}$: };
\node(a) at (-4.5,-1){$area(\gamma):$ };
\node(h) at (-4.5,-2){$ht(\gamma): $};
\node(e) at (-4.5,-3){$hook(\gamma):$ };
\node(e) at (-4.5,-4){$s_{hook(\gamma)}:$ };
\filldraw[pink] (0,0)--(0,2)--(1,2)--(1,1)--(2,1)--(2,0)--(0,0);
\draw[thin,gray] (0,0)--(0,2)--(1,2)--(1,1)--(2,1)--(2,0)--(0,0);
\draw[thin,gray] (1,0)--(1,1)--(0,1);
\draw[blue, thick]  (0,0)--(0,2);
\node(f6) at (0,2){\textcolor{red}{$\bullet$}};
\node(d) at (0,0){\textcolor{blue}{$\bullet$}};
\node(a1) at (1,-1){$3$};
\node(h1) at (1,-2){$2$};
\node(H1) at (1,-3){$6,1^{0}$};
\node(s) at (1,-4){$s_6$};
\end{tikzpicture}
\begin{tikzpicture}[scale=.5]
\node(s) at (-.5,.5){,};
\filldraw[pink] (0,0)--(0,1)--(2,1)--(2,0)--(0,0);
\draw[thin,gray] (0,0)--(0,2)--(1,2)--(1,1)--(2,1)--(2,0)--(0,0);
\draw[thin,gray] (1,0)--(1,1)--(0,1);
\draw[blue, thick]  (0,0)--(0,1)--(1,1);
\node(f6) at (1,1){\textcolor{red}{$\bullet$}};
\node(d) at (0,0){\textcolor{blue}{$\bullet$}};
\node(a1) at (1,-1){$2$};
\node(h1) at (1,-2){$1$};
\node(H1) at (1,-3){$4,1^{1}$};
\node(s) at (1,-4){$s_{41}$};
\end{tikzpicture}
\begin{tikzpicture}[scale=.5]
\node(w) at (-.5,.5){,};
\filldraw[pink] (1,0)--(1,1)--(2,1)--(2,0)--(1,0);
\draw[thin,gray] (0,0)--(0,2)--(1,2)--(1,1)--(2,1)--(2,0)--(0,0);
\draw[thin,gray] (1,0)--(1,1)--(0,1);
\draw[blue, thick]  (0,0)--(1,0)--(1,1);
\node(f6) at (1,1){\textcolor{red}{$\bullet$}};
\node(d) at (0,0){\textcolor{blue}{$\bullet$}};
\node(a1) at (1,-1){$1$};
\node(h1) at (1,-2){$1$};
\node(H1) at (1,-3){$3,1^{1}$};
\node(s) at (1,-4){$s_{31}$};
\end{tikzpicture}
\begin{tikzpicture}[scale=.5]
\node(w) at (-.5,.5){,};
\draw[thin,gray] (0,0)--(0,2)--(1,2)--(1,1)--(2,1)--(2,0)--(0,0);
\draw[thin,gray] (1,0)--(1,1)--(0,1);
\draw[blue, thick]  (0,0)--(2,0);
\node(f6) at (2,0){\textcolor{red}{$\bullet$}};
\node(d) at (0,0){\textcolor{blue}{$\bullet$}};
\node(a1) at (1,-1){$0$};
\node(h1) at (1,-2){$0$};
\node(H1) at (1,-3){$1,1^{2}$};
\node(s) at (1,-4){$s_{111}$};
\end{tikzpicture}
  \begin{align*}\langle \mathcal{E}_{4,4}, e_4\rangle|_{hooks}&= s_{6}+s_{41}+s_{31}+s_{111}
 \end{align*}
\caption{Example of $\langle \mathcal{E}_{4,4}, e_4\rangle|_{hooks}$}
\label{Fig : ex E_4}
\end{figure}

For those who are used to seeing $\langle \mathcal{E}^{\langle 2\rangle}_{n,n}, s_{\mu}\rangle=\langle \nabla(e_n), s_{\mu}\rangle$ in terms of Dyck paths and Schr\"oder paths, note that each path in $T_{n,k}$ is associated to a subset of Schr\"oder paths.
The next proposition is related to the restriction of Equation \eqref{Eq : main} to $\mathcal{E}^{\langle 2\rangle}_{n,n}$. Haglund proved in  \cite{[H2004]} that $\langle \nabla(e_n),e_{k}h_{n-k}\rangle$ could be given in terms of Schr\"oder paths with a given statistic. In \cite{[Wal2019b]} we give a bijection between a subset of Schr\"oder paths with $d$ diagonal steps and the set $\SYT(d+1,1^{n-d-1})\times\{ 1, 2, \ldots, n-d-1\}$. The subset is such that the paths end with a north step and the $\A$ statistic of that path is  equal to $1$.

 \begin{prop}\label{Prop : coro} If  $r=1$ and $\mu\in\{(k,1^{n-k})~|~1\leq k\leq n\}$ or if $r>1$ and $\mu=1^n$ then:
  \begin{align}\label{Eq : nabla}\langle \nabla^r(e_n), s_{\mu}\rangle|_{\hooks}=&\sum_{\tau\in \SYT(\mu)}  s_{r\binom{n}{2}-\m(\tau')}(q,t)+\sum_{i=2}^{\dd(\tau)} s_{r\binom{n}{2}-\m(\tau')-i,1}(q,t).
  \end{align}

Additionally, for all $k$ we have: 
\begin{equation}\label{Eq : delta en}\langle \Delta'_{e_k}(e_n), e_n\rangle|_{\hooks}=\sum_{\tau\in \SYT((n-k,1^k))}  s_{\m(\tau)}(q,t)+\sum_{i=2}^{k} s_{\m(\tau)-i,1}(q,t).
\end{equation}

Furthermore, for all $\mu\in\{(n), (n-1,1), (n-2,1,1),(1^n)\}$, $0\leq k < n-1$, we have:
\begin{equation}\label{Eq : delta mu cacher}e_{n-k-1}^\perp\left(\langle \mathcal{E}_{n,n},s_\mu\rangle|_{\hooks}\right)^{\langle 2\rangle}=\sum_{\tau\in \SYT(\mu)}\sum s_{(k-1+\A(\gamma)-\m(\tau'),1)}(q,t)+\sum s_{(k+\A(\gamma)-\m(\tau'))}(q,t).
\end{equation}
Likewise, if the conjecture $e_k^\perp \langle \mathcal{E}_{n,n}, s_\mu\rangle=_{\langle 2\rangle} \langle\Delta'_{e_{n-k-1}}e_n, s_\mu\rangle$ is true for all $k$, then  Equation \eqref{Eq : nabla} and Equation \eqref{Eq : delta mu cacher} hold for all $\mu$ when $r=1$, and:
\begin{equation}\label{Eq : delta mu}\langle \Delta'_{e_k}(e_n),s_\mu\rangle|_{\textrm{some hooks}}=\sum_{\tau\in \SYT(\mu)}\sum s_{(k-1+\A(\gamma)-\m(\tau'),1)}(q,t)+\sum s_{(k+\A(\gamma)-\m(\tau'))}(q,t)
\end{equation}
where the second sum of Equations \eqref{Eq : delta mu cacher} and Equation \eqref{Eq : delta mu} is over paths in $T_{n,\dd(\tau')}$ of height $k-2$ and $k-1$  and the third sum is over paths in $T_{n,\dd(\tau')}$ of height $k-1$ and $k$.

Finally, if $t=0$ or $q=0$ Equations \eqref{Eq : delta mu cacher} and Equation \eqref{Eq : delta mu} hold for all $\mu$ and Equation \eqref{Eq : nabla} holds for $\mu$ general if $r=1$ and for $r>1$ if $\mu$ is hook shaped. 
\end{prop}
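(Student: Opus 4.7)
The plan is to obtain Proposition~\ref{Prop : coro} by restricting Theorem~\ref{Thm : main} to two variables. Under the substitution $s_\lambda(\bm{Q})\mapsto s_\lambda(q,t)$ every hook $s_{(a,1^j)}(q,t)$ vanishes as soon as $j\geq 2$, so in the double sum $\sum_\tau\sum_{\gamma\in T_{n,\dd(\tau')}}s_{\hook(\gamma)}$ only the paths with $n-2-\h(\gamma)\in\{0,1\}$, i.e.\ paths of height $n-2$ or $n-3$, leave a nonzero contribution. Describing these two families completely is the core of the proposed proof of Equation~\eqref{Eq : nabla}.

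First I would handle the unique height-$(n-2)$ path: the all-north path from $(0,\dd(\tau'))$ to $(0,n-2)$, whose area fills the full staircase $\binom{n-1}{2}$. Plugging into $\hook(\gamma)$ and simplifying via $\binom{n-1}{2}+(n-1)=\binom{n}{2}$ gives the single-row shape $(r\binom{n}{2}-\m(\tau'))$, which is the first summand on the right of~\eqref{Eq : nabla}. Next I would handle the height-$(n-3)$ paths, parametrized by the height $h\in\{\dd(\tau'),\ldots,n-3\}$ at which the unique east step is taken; direct box counting gives $\A(\gamma)=\binom{n-1}{2}-(n-2-h)$, whence $\hook(\gamma)=(r\binom{n}{2}-\m(\tau')-(n-1-h),\,1)$. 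Setting $i=n-1-h$ and invoking the descent identity $\dd(\tau)+\dd(\tau')=n-1$ (the descents of $\tau$ and of $\tau'$ partition $\{1,\ldots,n-1\}$) converts the range of $h$ into $i\in\{2,\ldots,\dd(\tau)\}$, exactly the second summand. This establishes~\eqref{Eq : nabla} whenever Theorem~\ref{Thm : main} applies: $r=1$ with $\mu\in\{(n),(n{-}1,1),(n{-}2,1,1),(1^n)\}$ and $\mu=1^n$ with $r$ satisfying the compatibility hypothesis.

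The remaining cases of~\eqref{Eq : nabla} (general hook $\mu$, $r=1$) and Equations~\eqref{Eq : delta en}--\eqref{Eq : delta mu} are then obtained by applying $e_{n-k-1}^\perp$ to the hook expansion and invoking F.~Bergeron's conjecture $e_k^\perp\langle\mathcal{E}_{n,n},s_\mu\rangle\,{=}_{\langle 2\rangle}\,\langle\Delta'_{e_{n-k-1}}(e_n),s_\mu\rangle$, anchored by the exact equality $\Delta'_{e_{n-1}}(e_n)=\nabla(e_n)$. The Pieri-on-hooks identity
\[e_m^\perp s_{(a,1^j)}=s_{(a-m,\,1^j)}+s_{(a-m+1,\,1^{j-1})},\]
together with the $\langle 2\rangle$-survival conditions $j\leq 1$ on the first summand and $j\leq 2$ on the second, restricts the $T_n$-path sum to paths of heights $\{k-1,k\}$ and $\{k-2,k-1\}$ respectively---this is the origin of the ``some hooks'' qualifier and of the two partial sums in~\eqref{Eq : delta mu cacher}--\eqref{Eq : delta mu}. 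Specializing $\mu=e_n=s_{1^n}$, and using the involution $\tau\leftrightarrow\tau'$ between $\SYT((k+1,1^{n-k-1}))$ and $\SYT((n-k,1^k))$ together with the complementary identity $\m(\tau)+\m(\tau')=\binom{n}{2}$, yields~\eqref{Eq : delta en}.

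The specialization clause ($t=0$ or $q=0$) bypasses the four-shape restriction entirely: under $t=0$ every two-row summand $s_{(a,1)}(q,t)$ in~\eqref{Eq : nabla} vanishes, the right-hand side collapses to the first sum, and the identity reduces to the one-part formula~\eqref{Eq : 1part r}; the case $q=0$ follows symmetrically from the $q\leftrightarrow t$ invariance of $\nabla^r(e_n)$. The main obstacle I anticipate is not any individual computation but the index bookkeeping: correctly aligning the path data $(\A(\gamma),\h(\gamma))$ with the descent data $(\m(\tau'),\dd(\tau))$, and tracking which of the two Pieri summands survives at each height when $e_{n-k-1}^\perp$ is iterated to produce~\eqref{Eq : delta mu cacher}--\eqref{Eq : delta mu}.
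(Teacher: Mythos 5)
Your two-variable restriction of Theorem \ref{Thm : main} (the height-$(n-2)$ and height-$(n-3)$ path analysis) is correct and is essentially the content of Lemma \ref{Lem : restriction 2 variables}, and your Pieri-on-hooks height bookkeeping for \eqref{Eq : delta mu cacher} and \eqref{Eq : delta mu} matches the paper's reading of the theorem. The genuine gap is in how you cover the unconditional cases. Equation \eqref{Eq : nabla} is asserted, with no hypothesis, for $r=1$ and \emph{every} hook $\mu=(k,1^{n-k})$, $1\leq k\leq n$, and for $r>1$ with $\mu=1^n$; but Theorem \ref{Thm : main} only covers the four shapes $(n),(n-1,1),(n-2,1,1),(1^n)$ at $r=1$, and its $r>1$ case is itself conditional on the compatibility hypothesis and, moreover, concerns $\mathcal{E}_{rn,n}$, whose two-variable restriction is only conjecturally $\nabla^r(e_n)$ when $r>1$. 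Your plan to recover ``the remaining cases of \eqref{Eq : nabla} (general hook $\mu$, $r=1$)'' by invoking F.~Bergeron's conjecture would make those cases conditional, whereas the proposition states them unconditionally; in the proposition the conjecture is only needed for the extension to arbitrary (non-hook) $\mu$ and for \eqref{Eq : delta mu}. The paper closes this gap by citing Lemma \ref{Lem : nabla hookhook}, the author's prior unconditional result from \cite{[Wal2019b]} (proved there via $m$-Schr\"oder path combinatorics), which gives \eqref{Eq : nabla} directly for all hook $\mu$ at $r=1$, for $\mu=1^n$ at all $r$, and also the $t=0$/$q=0$ cases for hook $\mu$; you never invoke any such external input, and nothing in the present paper supplies it.

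The same issue propagates to \eqref{Eq : delta en} and to the specialization clause. Equation \eqref{Eq : delta en} is claimed unconditionally ``for all $k$''; the paper deduces it from \eqref{Eq : nabla} applied to the general hook $(k+1,1^{n-k-1})$ together with Haglund's proven identity $\langle\nabla(e_n),e_kh_{n-k}\rangle=\langle\Delta_{e_k}e_n,e_n\rangle$ from \cite{[H2004]}, i.e.\ $\langle\nabla(e_n),s_{(k+1,1^{n-k-1})}\rangle=\langle\Delta'_{e_{n-k-1}}e_n,e_n\rangle$ --- not from Bergeron's conjecture --- and your anchor $\Delta'_{e_{n-1}}(e_n)=\nabla(e_n)$ only treats $k=n-1$. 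Finally, for the $t=0$ (or $q=0$) clause with general, non-hook $\mu$, your reduction to the one-part formula \eqref{Eq : 1part r} does not apply, since that formula concerns hook-shaped $\mu$ only; the paper obtains this case from the construction in Lemma \ref{Lem : tout mu}, which lifts the Haglund--Rhoades--Shimozono formula \eqref{Eq : HRS} for $\Delta'_{e_{n-k-1}}(e_n)|_{t=0}$, combined with $\Delta'_{e_{n-1}}(e_n)=\nabla(e_n)$. So the computational skeleton you propose is sound, but as written the proof establishes strictly less than the proposition claims unless you import the external results the paper relies on.
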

The notation $|_{some ~hooks}$ simply means that some hooks are missing from the sum (i.e. the coefficients of Equation \eqref{Eq : delta mu} constitute a lower bound  for the coefficients of $\langle \Delta'_{e_k}(e_n),s_\mu\rangle|_{\hooks}$).
Note that  Equation \eqref{Eq : nabla} and Equation \eqref{Eq : delta en} hold for all q and t when $\mu=1^n$ and $r$ arbitrary or $r=1$ and $\mu$ is hook shaped.
Additionally, if  $k=n-1$ Equations \eqref{Eq : delta mu cacher} and Equation \eqref{Eq : delta mu} hold if the second sum of is over paths in $T_{n,\dd(\tau')}$ of height $k-2$  and the third sum is over paths in $T_{n,\dd(\tau')}$ of height $k-1$. 

 We will delay the proof of  Proposition \ref{Prop : coro} and Theorem \ref{Thm : main} until after Lemma \ref{Lem : restriction 2 variables}. Before we start let us notice that if $r\not=1$ and $\mu\not=1^n$ computer experimentation suggest that Equation \eqref{Eq : nabla} can be extended even tough its is incomplete (all the terms of the formula seem to appear but some positive terms of $\langle \mathcal{E}_{rn,n}, s_{\mu}\rangle|_{\hooks}$ are missing).

Let us start by proving the statement for the alternants of $\mathcal{E}_{rn,n}$.

 \begin{prop}\label{Prop : alternant rn}For $r=1$ we have:
\begin{equation}\label{Eq : rn}\langle \mathcal{E}_{rn,n}, e_n\rangle|_{\hooks}= \sum_{\gamma \in T_{n}} s_{\hook(\gamma)}
\end{equation}
where $\hook(\gamma)$ is the partition $\left((r-1)\binom{n}{2}+\A(\gamma)+ht(\gamma)+1,1^{n-2-ht(\gamma) }\right)$.
\\
Moreover, for all $r\in \mathbb{N}^*$ such that $ \langle\mathcal{E}_{rn,n}, s_{(j+1),1^{n-j}}\rangle=e^\perp_j \langle\mathcal{E}_{rn,n}, s_{1^{n}}\rangle$ is true for all $j$, Equation \eqref{Eq : rn} holds. 
\end{prop}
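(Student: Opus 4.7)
Set $G := \langle \mathcal{E}_{rn,n}, s_{1^n}\rangle = \langle \mathcal{E}_{rn,n}, e_n\rangle$. The plan is to compute $\psi(G|_{\hooks})(q,t)$ explicitly by using Lemma \ref{Lem : altern} to produce an alternating sum in $q,t$, and then collapsing that alternating sum via Proposition \ref{Prop : T_n=sum} into the generating function $T_{n,0}(q,q/t)$, which will turn out to match $\sum_{\gamma\in T_n}\psi(s_{\hook(\gamma)})$ term by term.

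First I would invoke the hypothesis $e_j^\perp G = \langle \mathcal{E}_{rn,n}, s_{j+1,1^{n-j-1}}\rangle$ (true for $r=1$, assumed otherwise) together with the fact that one-part Schur functions survive the restriction to two variables. This identifies $\psi((e_j^\perp G)|_{1\textrm{part}})$ with $\psi(\langle\nabla^r(e_n),s_{j+1,1^{n-j-1}}\rangle|_{1\textrm{part}}) = f_j^{(r)}(q)$ from Equation \eqref{Eq : 1part r}. Feeding this into Lemma \ref{Lem : altern} gives
\[\psi(G|_{\hooks})(q,t) \;=\; \sum_{j\geq 0}\sum_{k=0}^{j}(-1)^k f_{j-k}^{(r)}(q)\, q^{-k}\, t^j.\]

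Next I would eliminate the $q^{-1}$-binomial in $f_j^{(r)}$ using $\begin{bmatrix}n-1\\j\end{bmatrix}_{q^{-1}} = q^{-j(n-1-j)}\begin{bmatrix}n-1\\j\end{bmatrix}_q$, so that $f_j^{(r)}(q) = q^{r\binom{n}{2}+\binom{j}{2}-j(n-1)}\begin{bmatrix}n-1\\j\end{bmatrix}_q$. Substituting $J := n-1-j$ in the outer sum and applying the symmetry $\begin{bmatrix}n-1\\j-k\end{bmatrix}_q = \begin{bmatrix}n-1\\J+k\end{bmatrix}_q$ brings the inner alternating sum into precisely the shape of the left-hand side of Equation \eqref{Eq : somme alt->pos air+ht} of Proposition \ref{Prop : T_n=sum}, with
\[g_J(k) \;:=\; r\binom{n}{2} + \binom{n-1-J-k}{2} - (n-1-J-k)(n-1).\]
A one-line telescoping check gives the required recurrence $g_J(k)-g_J(k-1)=k+J$, and a direct expansion yields the crucial identity $g_J(0)-\binom{J+1}{2}=(r-1)\binom{n}{2}$, independent of $J$. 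The boundary contribution $J=0$ (that is, $j=n-1$) vanishes by Equation \eqref{Eq : nulle}, so applying Equation \eqref{Eq : somme alt->pos air+ht} with $z=1/t$, after pulling out a factor of $t^{n-2}$, produces
\[\psi(G|_{\hooks})(q,t) \;=\; q^{(r-1)\binom{n}{2}+1}\, t^{n-2}\, T_{n,0}(q,\,q/t).\]

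Finally, expanding $T_{n,0}(q,q/t) = \sum_{\gamma\in T_n} q^{\A(\gamma)+\h(\gamma)}\,t^{-\h(\gamma)}$ rewrites the right-hand side as $\sum_{\gamma\in T_n}\psi(s_{\hook(\gamma)})$ with $\hook(\gamma) = ((r-1)\binom{n}{2}+\A(\gamma)+\h(\gamma)+1, 1^{n-2-\h(\gamma)})$. Since $\psi$ is injective on hook-shape Schur functions (the map $s_{(a,1^k)}\mapsto q^at^k$ is a bijection onto its image in $\mathbb{Q}[q,t]$), matching monomials lifts the numerical equality back to Equation \eqref{Eq : rn}. The main obstacle I anticipate is the algebraic bookkeeping: verifying that the constant shift $g_J(0)-\binom{J+1}{2}$ really is independent of $J$ (so that Equation \eqref{Eq : somme alt->pos air+ht} rather than Equation \eqref{Eq : somme alt->pos} is the right tool) and that it evaluates to $(r-1)\binom{n}{2}$. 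A secondary point is justifying unconditionally, for the base case $r=1$, the adjoint Pieri identity $e_j^\perp\langle \mathcal{E}_{n,n}, s_{1^n}\rangle = \langle\mathcal{E}_{n,n}, s_{j+1,1^{n-j-1}}\rangle$.
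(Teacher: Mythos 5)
Your proposal is correct and follows essentially the same route as the paper: apply Lemma \ref{Lem : altern} with the one-part data $f_j^{(r)}$ from Equation \eqref{Eq : 1part r} (via the adjoint Pieri hypothesis, which is Bergeron's theorem for $r=1$), reverse the index and use $q$-binomial symmetry to reach the hypotheses of Proposition \ref{Prop : T_n=sum}, and collapse the alternating sum to $t^{n-2}q^{(r-1)\binom{n}{2}+1}T_{n,0}(q,qt^{-1})$, read off as $\sum_{\gamma\in T_n}\psi(s_{\hook(\gamma)})$. Your bookkeeping checks out ($g_J(k)-g_J(k-1)=k+J$ and $g_J(0)-\binom{J+1}{2}=(r-1)\binom{n}{2}$), the only cosmetic difference being that the paper factors out $q^{(r-1)\binom{n}{2}}$ first and takes $g_j(k)=\binom{j+k+1}{2}$.
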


\begin{proof}Let $A(q,t):=\psi(\mathcal{E}_{rn,n}, e_n\rangle|_{\hooks})(q,t)$. Recall that F. Bergeron's theorem gives us the equality $ \langle\mathcal{E}_{rn,n}, s_{(j+1),1^{n-j-1}}\rangle=e^\perp_j \langle\mathcal{E}_{rn,n}, s_{1^{n}}.\rangle$ for $r=1$, by Equation \eqref{Eq : 1part r} and Lemma \ref{Lem : altern} we have:
\begin{equation*}A(q,t)=\sum_{j\geq 0}\sum_{k=0}^j (-1)^k \left(q^{r\binom{n}{2}-\binom{j-k+1}{2}}\begin{bmatrix}n-1\\ j-k \end{bmatrix}_{q^{-1}}\right)q^{-k}t^j
\end{equation*}
Furthermore, $s_{(j+1),1^{n-j-1}}$ makes no sense for $j\geq n$ therefore $j$ parses true all value between $0$ and $n-1$ and we can change $j$ for $n-j-1$ and obtain the same result. Then by noticing that $q^{-(j+k)(n-j-k-1)}\begin{bmatrix}n-1\\ j+k \end{bmatrix}_{q}=\begin{bmatrix}n-1\\ j+k \end{bmatrix}_{q^{-1}}$, that $\binom{n}{2}-\binom{n-j-k}{2}-(j+k)(n-j-k-1)=\binom{j+k+1}{2}$ and simplifying we get:
\begin{equation*}A(q,t)=q^{(r-1)\binom{n}{2}}\sum_{j= 0}^{n-1}\sum_{k=0}^{n-j-1} (-1)^k \left(q^{\binom{j+k+1}{2}}\begin{bmatrix}n-1\\ j+k \end{bmatrix}_{q}\right)q^{-k}t^{n-j-1}
\end{equation*}
Considering a twist in variables we obtain:
\begin{equation*}z^{n-2}A(q,z^{-1})=q^{(r-1)\binom{n}{2}}\sum_{j= 0}^{n-1}\sum_{k=0}^{n-j-1} (-1)^k \left(q^{\binom{j+k+1}{2}}\begin{bmatrix}n-1\\ j+k \end{bmatrix}_{q}\right)q^{-k}z^{j-1}
\end{equation*}
Now $\binom{j+k+1}{2}-\binom{j+k}{2}=j+k$ by Proposition \ref{Prop : T_n=sum} (Equation \eqref{Eq : somme alt->pos air+ht}  ) if we set $g_{j}(k)=\binom{j+k+1}{2}$ we get:
\begin{equation*}z^{n-2}A(q,z^{-1})=q^{(r-1)\binom{n}{2}+1}T_n(q,qz)
\end{equation*}
Or equivalently:
\begin{equation*}A(q,t)=t^{n-2}q^{(r-1)\binom{n}{2}+1}T_n(q,qt^{-1})
\end{equation*}
In the generating function $T_{n,0}(q,t)$ the power of the variable $t$ corresponds to the height of the associated path in $T_{n,0}$ and the power of the variable $q$ to the area of the associated path in $T_{n,0}$ we have the interpretation of $t^{n-2}q^{(r-1)\binom{n}{2}+1}T_{n,0}(q,qt^{-1})$ as adding the area, the height and $(r-1)\binom{n}{2}+1$ to the first part and subtracting the height to $n-2$ gives the rest of the hook.
\end{proof}

There is only one standard tableau of shape $1^n$, its conjugate is $(n)$. Since $\dd((n))=0$ and $\m((n))=0$ the formula of the following lemma coincides with the formula of the previous proposition in the case $r=1$ and $\mu=1^n$. 

For the next lemma we need to define $rev_q$. As in \cite{[HRS2018b]} $rev_q:\mathbb{Z}[[X]][q]\rightarrow \mathbb{Z}[[X]][q]$ is defined by $rev_q(f(x)(q))=f(x)(q^{-1})q^{deg_q(f(x)(q))}$.

\begin{lem}\label{Lem : tout mu} If $e_k^\perp \langle \mathcal{E}_{n,n}, s_\mu\rangle=_{\langle 2\rangle} \langle\Delta'_{e_{n-k-1}}e_n, s_\mu\rangle$ is true for all $k$ then: 
\begin{equation}\label{Eq : lift delta}\langle \mathcal{E}_{n,n}, s_{\mu}\rangle|_{\hooks}=\sum_{\tau\in \SYT(\mu)} \sum_{\gamma \in T_{n, \dd(\tau')}} s_{\hook(\gamma)}
\end{equation}
where $\hook(\gamma)$ is the partition $(\A(\gamma)+ht(\gamma)-\m(\tau')+1,1^{n-2-ht(\gamma) })$ and $r=1$.
\end{lem}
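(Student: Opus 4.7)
The strategy parallels the proof of Proposition \ref{Prop : alternant rn}, with the Bergeron identity for $s_{1^n}$ replaced by the hypothetical identity for general $s_\mu$, and the single-hook formula \eqref{Eq : 1part r} replaced by an SYT-indexed expansion. Set $G := \langle \mathcal{E}_{n,n}, s_\mu\rangle \in \Lambda_Q$ and apply Lemma \ref{Lem : altern}:
\[
\psi(G|_{\hooks})(q,t) = \sum_{j\ge 0}\sum_{k=0}^{j}(-1)^{k}\, f_{j-k}(q)\, q^{-k}\, t^{j},
\qquad f_i(q) := \psi\bigl((e_i^\perp G)|_{1\textrm{part}}\bigr).
\]
Since $s_{(a,b)}(q,0) = 0$ whenever $b \geq 1$, the single-row piece of any element of $\Lambda_Q$ is recovered from its length-$\leq 2$ truncation by setting $t=0$. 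Combined with the hypothesis $e_i^\perp G =_{\langle 2\rangle} \langle \Delta'_{e_{n-i-1}}(e_n), s_\mu\rangle$, this yields $f_i(q) = \langle \Delta'_{e_{n-i-1}}(e_n), s_\mu\rangle|_{t=0}$.

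The key technical input is the tableau-indexed expansion
\[
\langle \Delta'_{e_{n-i-1}}(e_n), s_\mu\rangle\bigl|_{t=0} \;=\; \sum_{\tau\in\SYT(\mu)} q^{\binom{n}{2} - \binom{i+1}{2} - \m(\tau')} \begin{bmatrix} n-1-\dd(\tau') \\ i \end{bmatrix}_{q^{-1}}.
\]
For $\mu = 1^n$ the unique $\tau$ has $\dd(\tau') = \m(\tau') = 0$, and this recovers Equation \eqref{Eq : 1part r} at $r=1$; for $\mu = (n)$ it correctly gives $f_0 = 1$ and $f_i = 0$ otherwise. In full generality the identity should be derivable from the $t=0$ specialization of the shuffle/Delta theorems combined with an RSK-type decomposition indexed by recording tableau. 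Establishing this SYT expansion in full generality is the principal obstacle of the argument.

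Granting the expansion, substitute it into Lemma \ref{Lem : altern} and interchange the summations so that $\tau$ sits outside. For each fixed $\tau$ with $d := \dd(\tau')$, the remaining alternating double sum is structurally identical to that of Proposition \ref{Prop : alternant rn}, but with $n-1$ replaced by $n-1-d$ in the Gaussian binomial and with an overall $q$-shift by $-\m(\tau')$. The reindexing $j \mapsto n-1-j$ together with the conversion $\begin{bmatrix}n-1-d\\m\end{bmatrix}_{q^{-1}} = q^{-m(n-1-d-m)}\begin{bmatrix}n-1-d\\m\end{bmatrix}_q$ and the identity $\binom{m+1}{2} - md = \binom{m-d+1}{2} + \binom{d+1}{2} - d^2$ place the sum into the form of Equation \eqref{Eq : somme alt->pos air+ht} of Proposition \ref{Prop : T_n=sum} with $g_j(k) = \binom{j+k+1}{2}$, applied to the shifted staircase of size $n-d$. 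Combined with the relation $T_{n,d}(q,z) = T_{n-d,0}(q,z)\, z^d\, q^{d(n-d-2)+\binom{d+1}{2}}$ from Proposition \ref{Prop : T_n}, the alternating sum collapses to the positive contribution
\[
q^{1-\m(\tau')}\, t^{n-2}\, T_{n,\dd(\tau')}(q,\,qt^{-1}).
\]

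Reading off monomials, each path $\gamma\in T_{n,\dd(\tau')}$ then contributes
\[
q^{\A(\gamma)+\h(\gamma)-\m(\tau')+1}\, t^{n-2-\h(\gamma)} \;=\; \psi\bigl(s_{(\A(\gamma)+\h(\gamma)-\m(\tau')+1,\,1^{n-2-\h(\gamma)})}\bigr) \;=\; \psi(s_{\hook(\gamma)}),
\]
and summing over $\tau\in\SYT(\mu)$ produces the claimed identity. Everything past the SYT expansion is a direct lift of the computation in Proposition \ref{Prop : alternant rn}, with $T_{n,0}$ replaced by $T_{n,\dd(\tau')}$ and a uniform extra $q$-shift by $-\m(\tau')$; the only genuinely new mathematical content is the second step.
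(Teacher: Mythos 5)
Your overall route is the same as the paper's: apply Lemma \ref{Lem : altern} to $G=\langle\mathcal{E}_{n,n},s_\mu\rangle$, use the hypothesis to identify $f_i(q)$ with the $t=0$ specialization of $\langle\Delta'_{e_{n-i-1}}(e_n),s_\mu\rangle$, feed in an $\SYT(\mu)$-indexed formula for that specialization, split the alternating sum by tableau, and collapse each piece via Proposition \ref{Prop : T_n=sum} and Proposition \ref{Prop : T_n} into $t^{n-2}q^{1-\m(\tau')}T_{n,\dd(\tau')}(q,qt^{-1})$. That part of your computation is sound and matches the paper's proof; your bookkeeping via $T_{n,d}(q,z)=T_{n-d,0}(q,z)\,z^d q^{d(n-d-2)+\binom{d+1}{2}}$ is equivalent to the paper's choice of $g_j(k)=\dd(\tau)\dd(\tau')+\binom{j+k+1}{2}+\binom{\dd(\tau')+1}{2}-\m(\tau')$ followed by Equation \eqref{Eq : Tr0->Tns}.

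The genuine gap is exactly where you flag it: the $\SYT$-indexed expansion of $\langle\Delta'_{e_{n-i-1}}(e_n),s_\mu\rangle|_{t=0}$ is asserted, not established, and the remark that it ``should be derivable from the shuffle/Delta theorems with an RSK-type decomposition'' is not a proof. Since this identity is the sole ingredient that carries the argument beyond $\mu=1^n$, the lemma is not proved as written. The paper closes precisely this point by applying $\omega\circ rev_q$ to Corollary 6.13 of \cite{[HRS2018b]} (Haglund--Rhoades--Shimonozono), which yields $\Delta'_{e_{n-k-1}}(e_n)|_{t=0}=\sum_{\tau} q^{k\dd(\tau')+\binom{n-k}{2}-\m(\tau')}\left[\genfrac{}{}{0pt}{}{\dd(\tau)}{k}\right]_q s_{\lambda(\tau)}$. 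Your stated formula is in fact equivalent to this known result: using $\dd(\tau)=n-1-\dd(\tau')$, $\left[\genfrac{}{}{0pt}{}{\dd(\tau)}{i}\right]_{q^{-1}}=q^{-i(\dd(\tau)-i)}\left[\genfrac{}{}{0pt}{}{\dd(\tau)}{i}\right]_q$ and $\binom{n-i}{2}=\binom{n}{2}-ni+\binom{i+1}{2}$, the two exponents agree. So the missing step is not a new computation but the citation/derivation from the $t=0$ Delta theorem; with that substituted for your ``principal obstacle'' paragraph, the rest of your argument goes through as in the paper.
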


\begin{proof}Recall that $\omega$ was defines in Section \ref{Sec : car}. By applying $\omega\circ rev_q$ to corollary 6.13 of \cite{[HRS2018b]} we obtain:

\begin{equation}\label{Eq : HRS}\Delta'_{e_{n-k-1}}(e_n)|_{t=0}=\sum_{\tau\in \SYT(n)} q^{k\dd(\tau')+\binom{n-k}{2}-\m(\tau')}\begin{bmatrix}\dd (\tau)\\ k\end{bmatrix}_q s_{\lambda(\tau)}
\end{equation}

Therefore:
\begin{equation*}f_k^{((n))}(q):=\psi(\langle \Delta'_{e_{n-k-1}}(e_n)|_{t=0},s_\mu\rangle )=\sum_{\tau\in \SYT(\mu)} q^{k\dd(\tau')+\binom{n-k}{2}-\m(\tau')}\begin{bmatrix}\dd (\tau)\\ k\end{bmatrix}_q 
\end{equation*}
and:
\begin{equation*}f_k^{(\mu)}(q)= q^{k(n-1)+\binom{n-k}{2}-\binom{n}{2}}\begin{bmatrix}0\\ k\end{bmatrix}_q 
\end{equation*}

Then by Lemma \ref{Lem : altern} we have:
\begin{equation*}\psi(\langle \mathcal{E}_{n,n}, s_{\mu}\rangle|_{\hooks})=\sum_{j\geq 0}\sum_{k=0}^j (-1)^k \sum_{\tau\in \SYT(\mu)} q^{(j-k)\dd(\tau')+\binom{n-j+k}{2}-\m(\tau')}\begin{bmatrix}\dd (\tau)\\ j-k\end{bmatrix}_q q^{-k}t^j
\end{equation*}

Let $A_\tau^{(\mu)}(q,t)$ be the part of $\psi(\langle \mathcal{E}_{n,n}, s_{\mu}\rangle|_{\hooks})$ when the sum is over $\tau$. In other words we have $\sum_{\tau \in \SYT(\mu)} A_\tau^{(\mu)}(q,t)=\psi(\langle \mathcal{E}_{n,n}, s_{\mu}\rangle|_{\hooks})$. Much like for the previous proposition $j$ parses from $0$ to $\dd(\tau)$. So we can change $j$ for $\dd(\tau)-j$ and obtain the same result, in consequence simplification gives us:
\begin{equation*}
A_\tau^{(\mu)}(q,t)= \sum_{j= 0}^{\dd(\tau)}\sum_{k=0}^{\dd(\tau)-j} (-1)^k q^{(\dd(\tau)-j-k)\dd(\tau')+\binom{n-\dd(\tau)+j+k}{2}-\m(\tau')}\begin{bmatrix}\dd (\tau)\\ j+k\end{bmatrix}_q q^{-k}t^{\dd(\tau)-j}
\end{equation*}
and:
\begin{equation*}
A_\tau^{((n))}(q,t)= \sum_{j= 0}^{0}\sum_{k=0}^{-j} (-1)^k q^{(-j-k)(n-1)+\binom{n+j+k}{2}-\binom{n}{2}}\begin{bmatrix}0 \\ j+k\end{bmatrix}_q q^{-k}t^{-j}=1
\end{equation*}

By recalling that  $n-1-\dd(\tau)=\dd(\tau')$ we notice the equality $\binom{n-\dd(\tau)+j+k}{2}=\binom{j+k+1}{2}+\binom{\dd(\tau')+1}{2}+(j+k)\dd(\tau')$. 
Therefore, by setting $g_j(k)=\dd(\tau)\dd(\tau')+\binom{j+k+1}{2}+\binom{\dd(\tau')+1}{2}-\m(\tau')$  we have $g_j(k)-g_j(k-1)=k+j$. Using Equation \eqref{Eq : alt->pos} of Proposition \ref{Prop : T_n=sum} we obtain:
\begin{equation*}A_\tau^{(\mu)}(q,t)=\sum_{j= 0}^{\dd(\tau)-1} q^{\dd(\tau)\dd(\tau')+\binom{j+1}{2}+\binom{\dd(\tau')+1}{2}-\m(\tau')}\begin{bmatrix}\dd (\tau)-1\\ j\end{bmatrix}_qt^{n-1-\dd(\tau')-j}
\end{equation*}

Has in the last proposition we change variables and obtain:
\begin{equation*}z^{n-2-\dd(\tau')}A_\tau^{(\mu)}(q,z^{-1})=\sum_{j= 0}^{\dd(\tau)-1} q^{\dd(\tau)\dd(\tau')+\binom{j+k+1}{2}+\binom{\dd(\tau')+1}{2}-\m(\tau')}\begin{bmatrix}\dd (\tau)-1\\ j\end{bmatrix}_qz^{j-1}
\end{equation*}

Finally by Proposition \ref{Prop : T_n} we get:
\begin{equation*}z^{n-2-\dd(\tau')}A_\tau^{(\mu)}(q,z^{-1})=q^{1+\dd(\tau)\dd(\tau')+\binom{\dd(\tau')+1}{2}-\m(\tau')}T_{\dd(\tau)+1,0}(q,qz)
\end{equation*}
Hence:
\begin{equation*}A_\tau^{(\mu)}(q,t)=t^{n-2}q^{1-\m(\tau')}q^{(\dd(\tau)-1)\dd(\tau')+\binom{\dd(\tau')+1}{2}}(qt^{-1})^{\dd(\tau')}T_{\dd(\tau)+1,0}(q,qt^{-1})
\end{equation*}
Ergo by Equation \eqref{Eq : Tr0->Tns} of Proposition \ref{Prop : T_n} we have:
\begin{equation*}A_\tau^{(\mu)}(q,t)=t^{n-2}q^{1-\m(\tau')}T_{n,\dd(\tau')}(q,qt^{-1})
\end{equation*}
and:
\begin{equation*}A_\tau^{((n))}(q,t)=1=t^{n-2}q^{1-\binom{n}{2}}q^{\binom{n-1}{2}+n-2}t^{-n+2}=t^{n-2}q^{1-\binom{n}{2}}T_{n,n-1}(q,qt^{-1})
\end{equation*}

\end{proof}
Notice that Equation \eqref{Eq : lift delta} is a lift of $\Delta'_{e_{n-k-1}}(e_n)|_{t=0}$ therefore Equation \eqref{Eq : delta mu cacher} at $t=0$ is a reinterpretation using our object of Equation  \eqref{Eq : HRS} of Haglund, Rhoades and Shimonozo. Furthermore, $\Delta'_{e_{n-1}}(e_n)=\nabla(e_n)$ thus Equation \eqref{Eq : nabla}  at $t=0$ for general $\mu$ and $r=1$ is just a reinterpretation of Equation \eqref{Eq : HRS}

We will now prove that for all $\mu=(d,1^{n-d})$ the restriction to two variables of Equation \eqref{Eq : main} is true independently of F.Bergeron's $e_k^\perp \langle \mathcal{E}_{n,n}, s_\mu\rangle=_{\langle 2\rangle} \langle\Delta'_{e_{n-k-1}}e_n, s_\mu\rangle$ for all $k$ conjecture. In other word the formula gives correctly the coefficients of $s_\mu\otimes s_\lambda$ in $\langle\nabla(e_n),s_\mu\rangle$ when $\mu$ and $\lambda$ are hook shaped.

To this end we recall the following result from \cite{[Wal2019b]}:

\begin{lem}[Wallace]\label{Lem : nabla hookhook} If  $\mu\in\{(d,1^{n-d})~|~1\leq d\leq n\}$ then:
  \begin{align*}\langle \nabla(e_n), s_{\mu}\rangle|_{\hooks}=\sum_{\tau\in \SYT(\mu)}  s_{\m(\tau)}(q,t)+\sum_{i=2}^{\dd(\tau)} s_{\m(\tau)-i,1}(q,t),
  \end{align*}
    \begin{equation*}\langle \nabla^r(e_n), s_\mu \rangle|_{1 \pp}=\sum_{\tau\in \SYT(\mu)}  s_{r\binom{n}{2}-\m(\tau')}(q,0)=\sum_{\tau\in \SYT(\mu)}  s_{r\binom{n}{2}-\m(\tau')}(0,t),
  \end{equation*}
  and:
  \begin{equation*}\langle \nabla^r(e_n), e_n\rangle|_{\hooks}=\sum_{\tau\in \SYT(\mu)}  s_{r\binom{n}{2}-\m(\tau')}(q,t)+\sum_{i=2}^{\dd(\tau)} s_{r\binom{n}{2}-\m(\tau')-i,1}(q,t).
  \end{equation*}
\end{lem}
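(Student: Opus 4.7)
My plan would be to prove the three identities in the order listed, building on Haglund's Schröder-path formula for $\langle\nabla(e_n),e_a h_b\rangle$ and on the 1-part computation of Equation \eqref{Eq : 1part r}. First, for the $r=1$ hook identity, I would expand $s_{(d,1^{n-d})}$ via Jacobi--Trudi as a signed sum of products of one elementary and one complete-homogeneous function, apply Haglund's \cite{[H2004]} theorem to each term, and restrict to the hook-Schur component. Only the Schröder paths of area $0$ and area $1$ can contribute to hook-Schurs of the form $s_{(a)}(q,t)$ and $s_{(a,1)}(q,t)$; the area-$0$ paths index the principal terms $s_{\m(\tau)}(q,t)$, while the area-$1$ paths are in bijection (via the construction announced in \cite{[Wal2019b]}) with $\SYT(\mu)\times\{2,\dots,\dd(\tau)\}$, and the bounce statistic tracks $\m(\tau)-i$.

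Second, I would derive the 1-part identity directly from \eqref{Eq : 1part r}, which reads $\psi(\langle\nabla^r(e_n),s_\mu\rangle|_{1\,\pp})=q^{r\binom{n}{2}-\binom{d}{2}}\left[\genfrac{}{}{0pt}{}{n-1}{d-1}\right]_{q^{-1}}$ for $\mu=(d,1^{n-d})$. Applying $\left[\genfrac{}{}{0pt}{}{n}{k}\right]_{q^{-1}}=q^{-k(n-k)}\left[\genfrac{}{}{0pt}{}{n}{k}\right]_q$ and Stanley's $q$-hook-length formula, which for hook shapes specializes to $\sum_{\tau\in\SYT(\mu)}q^{\m(\tau)}=q^{\binom{n-d+1}{2}}\left[\genfrac{}{}{0pt}{}{n-1}{d-1}\right]_q$, together with the identity $\m(\tau)+\m(\tau')=\binom{n}{2}$ (which holds because conjugation exchanges descents and non-descents, so $\DD(\tau)$ and $\DD(\tau')$ partition $\{1,\dots,n-1\}$), one rewrites the 1-part $q$-polynomial as $\sum_\tau q^{r\binom{n}{2}-\m(\tau')}$. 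Since $s_{(N)}(q,0)=q^N$, this equals $\sum_\tau s_{r\binom{n}{2}-\m(\tau')}(q,0)$; the symmetric expression with $s_{r\binom{n}{2}-\m(\tau')}(0,t)$ follows from the known $q\leftrightarrow t$ symmetry of $\nabla^r(e_n)$.

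Third, for the hook identity with arbitrary $r$, I would combine Steps 1 and 2 via the lifting Lemma \ref{Lem : altern} applied to $G=\langle\nabla^r(e_n),s_\mu\rangle$: the hook components are reconstructed from iterated 1-part data under $e_k^\perp$ through an alternating sum. Substituting the 1-part computation from Step 2 and then invoking Proposition \ref{Prop : T_n=sum} (in particular Equation \eqref{Eq : somme alt->pos air+ht}) converts the alternating sum into a positive $T_{n,\dd(\tau')}$-indexed sum; matching terms cell-by-cell then identifies the principal monomial $s_{r\binom{n}{2}-\m(\tau')}(q,t)$ with the empty path and the secondary terms $s_{r\binom{n}{2}-\m(\tau')-i,1}(q,t)$ with the non-empty paths of heights $\dd(\tau')+1,\dots,\dd(\tau')+\dd(\tau)-1$. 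An induction on $r$, using $\nabla^r=\nabla\circ\nabla^{r-1}$ and the Pieri-style action on hook Schurs established in Step 1, provides an independent consistency check.

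The main obstacle is Step~1, since it requires explicit control of the bijection between Schröder paths of area one and $\SYT(\mu)\times\{2,\dots,\dd(\tau)\}$, together with a careful matching of the $(\text{area},\text{bounce})$ bi-statistic with the $(\m(\tau),i)$ pair on the hook-Schur side. The Jacobi--Trudi expansion produces many cancellations, and one must verify that all non-hook contributions and all Schröder paths of area $\geq 2$ cancel pairwise. This bijective heart of the argument is precisely what is proved in \cite{[Wal2019b]}, and the remaining Steps~2 and~3 can then be carried out as described using only the already-established Lemma \ref{Lem : altern} and Proposition \ref{Prop : T_n=sum}.
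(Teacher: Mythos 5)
First, note that the paper you are comparing against does not actually prove this lemma: it is explicitly \emph{recalled} from \cite{[Wal2019b]} (``To this end we recall the following result''), so there is no internal argument to match, and the honest benchmark is whether your reconstruction would stand on its own. Your Step 2 does: rewriting \eqref{Eq : 1part r} with $\left[\genfrac{}{}{0pt}{}{n-1}{d-1}\right]_{q^{-1}}=q^{-(d-1)(n-d)}\left[\genfrac{}{}{0pt}{}{n-1}{d-1}\right]_q$, the hook major-index generating function $\sum_{\tau\in\SYT(d,1^{n-d})}q^{\m(\tau)}=q^{\binom{n-d+1}{2}}\left[\genfrac{}{}{0pt}{}{n-1}{d-1}\right]_q$ and $\m(\tau)+\m(\tau')=\binom{n}{2}$ is a correct, self-contained derivation of the second identity (the exponents do match). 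Step 1 is plausible in outline, but be aware that ``only area $0$ and area $1$ Schr\"oder paths contribute'' is not a path-by-path cancellation statement: the correct mechanism is that $\langle\nabla(e_n),s_\mu\rangle$ is a nonnegative sum of Schur polynomials $s_\lambda(q,t)$ (it is a $GL_2$-character), and since $s_{(a,b)}(q,t)$ with $b\geq 2$ is divisible by $q^2t^2$, the hook multiplicities are read off from the coefficients of $t^0$ and $t^1$, i.e.\ from bounce-$0$ and bounce-$1$ (equivalently, by $q,t$-symmetry, area-$0$ and area-$1$) paths in Haglund's formula \cite{[H2004]}. You then defer this bijective heart to \cite{[Wal2019b]} anyway, which is what the paper itself does.

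The genuine gap is Step 3. Lemma \ref{Lem : altern} takes as input a multivariate symmetric function $G\in\Lambda_Q$ together with the one-part data of $e_i^\perp G$; it cannot be applied to the two-variable polynomial $\langle\nabla^r(e_n),s_\mu\rangle$ as you propose, because $e_i^\perp$ of a polynomial in only two variables carries none of the needed information. To make your argument run you would have to pass to the lift $\langle\mathcal{E}_{rn,n},e_n\rangle$ and invoke both $e_j^\perp\langle\mathcal{E}_{rn,n},s_{1^n}\rangle=\langle\mathcal{E}_{rn,n},s_{(j+1,1^{n-j-1})}\rangle$ and $\mathcal{E}_{rn,n}^{\langle 2\rangle}=\nabla^r(e_n)$, which for $r>1$ are exactly the conjectural hypotheses from \cite{[B2018]} that force the caveat in Theorem \ref{Thm : main}; your route would therefore prove the third identity only conditionally, while the lemma is unconditional. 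Worse, inside this paper that route is circular in spirit: the lemma is precisely what is used (through Lemma \ref{Lem : restriction 2 variables}) as the independent two-variable check of the multivariate $T_{n,s}$-formula, so it cannot be re-derived from that machinery. Staying in two variables does not rescue the step either: the one-part restrictions of the $\langle\nabla^r(e_n),s_{(j+1,1^{n-j-1})}\rangle$ determine only the $t^0$ coefficient of $\langle\nabla^r(e_n),e_n\rangle$, not its $t^1$ coefficient, so Proposition \ref{Prop : T_n=sum} has nothing to convert. The unconditional proof has to come from the $r$-analogue of your Step 1, i.e.\ the $m$-Schr\"oder path combinatorics of \cite{[Wal2019b]} applied to the bounce-$0$ and bounce-$1$ strata, not from the lifting lemma.
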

Note that $\binom{n}{2}-\m(\tau')=\m(\tau)$ and $\dd(\tau)=n-1-\dd(\tau')$.
\begin{lem}\label{Lem : restriction 2 variables}   If  $\mu\in\{(d,1^{n-d})~|~1\leq d\leq n\}$  then:
 \begin{equation*}\left(\sum_{\tau\in \SYT(\mu)} \sum_{\gamma \in T_{n, \dd(\tau')}} s_{\hook(\gamma)}\right)^{\langle 2 \rangle}=\sum_{\tau\in \SYT(\mu)}  s_{r\binom{n}{2}-\m(\tau')}(q,t)+\sum_{i=2}^{\dd(\tau)} s_{r\binom{n}{2}-\m(\tau')-i,1}(q,t),
  \end{equation*}
  where $\hook(\gamma)=\left((r-1)\binom{n}{2}+\A(\gamma)+\h(\gamma)-\m(\tau')+1,1^{n-2-\h(\gamma) }\right)$.
\end{lem}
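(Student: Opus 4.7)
The plan is to use the vanishing $s_\lambda(q,t)=0$ whenever $\ell(\lambda)>2$, so that under the two-variable restriction only the summands with $\ell(\hook(\gamma))\leq 2$ survive. Since $\hook(\gamma)=\bigl((r-1)\binom{n}{2}+\A(\gamma)+\h(\gamma)-\m(\tau')+1,\;1^{n-2-\h(\gamma)}\bigr)$ has $n-1-\h(\gamma)$ parts when $\h(\gamma)<n-2$ and one part when $\h(\gamma)=n-2$, the surviving paths are exactly those of height $n-3$ or $n-2$. I would split the sum over $\gamma$ accordingly into these two contributions.

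For the height-$(n-2)$ contribution, each $\tau\in\SYT(\mu)$ supplies a unique path (the all-north path, or the empty word $\epsilon$ when $\dd(\tau')\geq n-2$), of area $\binom{n-1}{2}$. Its hook has a single part equal to
\[
(r-1)\binom{n}{2}+\binom{n-1}{2}+(n-2)-\m(\tau')+1=r\binom{n}{2}-\m(\tau'),
\]
giving exactly the first sum on the right. For the height-$(n-3)$ contribution, write $s=\dd(\tau')$ and use Proposition \ref{Prop : T_n} to transport to $T_{n-s,0}$: the relevant paths lie at height $n-s-3$ and are counted by $[n-s-2]_q$, with areas $\binom{n-s-2}{2}+i$ in $T_{n-s,0}$ for $i=0,\ldots,n-s-3$. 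The shift $(n-2-s)s+\binom{s+1}{2}$ then yields areas $\binom{n-2}{2}+s+i$ in $T_{n,s}$, via the elementary identity
\[
\binom{n-s-2}{2}+(n-2-s)s+\binom{s+1}{2}=\binom{n-2}{2}+s,
\]
itself immediate from $\binom{n-2}{2}=\binom{n-s-2}{2}+(n-s-2)s+\binom{s}{2}$ and $\binom{s+1}{2}-\binom{s}{2}=s$. A short simplification then puts the hook in the form $\bigl(r\binom{n}{2}-\m(\tau')-(\dd(\tau)-i),\,1\bigr)$, and re-indexing via $j=\dd(\tau)-i$ recovers $\sum_{j=2}^{\dd(\tau)}s_{r\binom{n}{2}-\m(\tau')-j,\,1}$, matching the second sum term-for-term.

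The main work is bookkeeping: tracking the statistics $\A$ and $\h$ through the bijection of Proposition \ref{Prop : T_n} between $T_{n,s}$ and $T_{n-s,0}$, and repeatedly applying the complementary descent identity $\dd(\tau)+\dd(\tau')=n-1$ to convert between $s$-indexing and $\dd(\tau)$-indexing. The degenerate edge cases $\dd(\tau')\in\{n-2,n-1\}$ (corresponding to $\mu\in\{(n-1,1),(n)\}$) are handled uniformly by the $T_{n,s}=\{\epsilon\}$ convention and yield empty second sums on both sides, so no separate case analysis is required; beyond this, the lemma is a purely combinatorial identity at the level of paths.
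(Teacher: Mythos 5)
Your proposal is correct and follows essentially the same route as the paper: restrict to the surviving heights $n-2$ and $n-3$, compute the single-part contribution $r\binom{n}{2}-\m(\tau')$, and re-index the height-$(n-3)$ areas to recover $\sum_{i=2}^{\dd(\tau)}s_{r\binom{n}{2}-\m(\tau')-i,1}$. The only (cosmetic) difference is that you compute the height-$(n-3)$ areas by transporting to $T_{n-s,0}$ via Proposition \ref{Prop : T_n}, whereas the paper parametrizes these paths directly in $T_{n,\dd(\tau')}$ by their number of initial north steps; the bookkeeping is equivalent.
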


\begin{proof}Notice that the restriction to two variables is equivalent to the restriction to hooks of length $2$ or less as seen in Section \ref{Sec : car}. We therefore only need to consider the paths of height $n-3$ and $n-2$. The area of a path of height $n-2$ is $\binom{n-1}{2}$ and the hook associated to it has only one part. Furthermore, $(r-1)\binom{n}{2}+\binom{n-1}{2}+n-2+1=r\binom{n}{2}$. This accounts for the first sum on the right hand side. 

The area of a path of height $n-3$ in $T_{n,\dd(\tau')}$ starting with exactly $p$ north steps is $\binom{n-1}{2}-j$, where $j=n-2-p-\dd(\tau')$. The number of north steps at the beginning of the path is bounded by $0\leq p \leq n-3-\dd(\tau')$. This is equivalent to $\dd(\tau)\geq n-1-p -\dd(\tau')\geq 2$. Consequently $(r-1)\binom{n}{2}+\binom{n-1}{2}-j+n-3+1-\m(\tau')=r\binom{n}{2}-\m(\tau')-i$ for $i=n-1-p-\dd(\tau')$ which accounts for the second sum of the right hand side.
\end{proof}

We can now prove Proposition \ref{Prop : coro} and Theorem \ref{Thm : main}.

\begin{proof}[Proof of Theorem \ref{Thm : main}]
For $r=1$ we start with the cases where  $\mu\in\{(n), (n-1,1), (n-2,1,1)\}$. The descent of a standard tableau of shape $(n-k,1^{k})$ has $k$ elements, thus we have $\dd(\tau')=n-1$ if $\tau\in\SYT((n))$, $\dd(\tau')=n-2$ if $\tau\in\SYT((n-1,1))$, $\dd(\tau')=n-3$ if $\tau\in\SYT((n-2,1,1))$. This implies that the height of the paths related to these tableaux must be greater or equal to $n-3$. In consequence $\ell(\hook(\gamma))=\ell(\A(\gamma)+\h(\gamma)-\m(\tau')+1,1^{n-2-\h(\gamma)})\leq 2$. Therefore, no term disappears in the restriction to two variables and  by Lemma \ref{Lem : restriction 2 variables} we have $\sum_{\tau\in \SYT(\mu)} \sum_{\gamma \in T_{n, \dd(\tau')}} s_{\hook(\gamma)}=\langle \nabla(e_n), s_{\mu}\rangle|_{\hooks}$ in these cases. Moreover, it is shown in \cite{[B2018]} that $\langle\mathcal{E}_{n,n}, s_{\mu}\rangle=\langle \nabla(e_n), s_{\mu}\rangle$ when $\mu\in\{(n),(n-1,1), (n-2,1,1), (n-2,2)\}$, which is what we needed.

The remainder of the proof is a direct consequence of Proposition \ref{Prop : alternant rn} and Lemma \ref{Lem : tout mu}.
\end{proof}

\begin{proof}[Proof of Proposition \ref{Prop : coro}] It was proven in  \cite{[H2004]} that $\langle \nabla(e_n),e_{k}h_{n-k}\rangle=\langle \Delta_{e_k} e_n,e_n\rangle$, therefore $\langle \nabla(e_n),s_{(k+1,1^{n-k-1})}\rangle=\langle \Delta'_{e_{n-k-1}} e_n,e_n\rangle$. Hence, Equation \eqref{Eq : nabla} implies the Equation \eqref{Eq : delta en}.

Lemma \ref{Lem : nabla hookhook} proves that Equation \eqref{Eq : nabla} is true for $r=1$ and $\mu\in\{(k,1^{n-k})~|~1\leq k\leq n\}$ or $r>1$ and $\mu=1^n$. It also proves that Equation \eqref{Eq : nabla} holds for $\mu$ hooked shaped when $r\geq 1$ and $t=0$ or $q=0$.

Furthermore, in  Lemma \ref{Lem : tout mu} the formula is constructed by lifting the Schur functions having only one part. Ergo Equation \eqref{Eq : nabla}, Equation \eqref{Eq : delta mu cacher}, and Equation \eqref{Eq : delta mu} holds for all $\mu$ when $r=1$ and $t=0$ or $q=0$ since $\Delta'_{e_{n-1}}(e_n)=\nabla(e_n)$. This also implies that Equation \eqref{Eq : delta mu} holds for all $\mu$ if F.Bergeron's conjecture is true.

Equation \eqref{Eq : delta mu cacher} is the restriction of Theorem \ref{Thm : main} to the paths associated to hooks of length one and two. Equation \eqref{Eq : delta mu} is a rewriting of Equation \eqref{Eq : delta mu cacher}. This is just to state that the paths ending at height $k$ and $k-1$ in $T_{n,s}$ correspond to the Schur functions with one part in $ \Delta'_{e_k}(e_n)$ and the paths ending at height $k-2$ correspond to Schur functions that are indexed by hooks of length two.
\end{proof}

One may be interested to notice that the hook lengths in Theorem \ref{Thm : main} can be computed using only the area statistic and the major index. 

%%%%%%%%%%%%%%%%%%%%%%%%%%%%%%%%%%%%%%%%
%%%%%%%%%%%%%%%%%%		Adjoint dual Pieri rule
%%%%%%%%%%%%%%%%%%%%%%%%%%%%%%%%%%%%%%%%

\section{Adjoint dual Pieri rule}\label{Sec : Pieri}

It is shown in \cite{[B2018]} that $ \langle\mathcal{E}_{n,n}, s_{(k+1),1^{n-k-1}}\rangle=e^\perp_k \langle\mathcal{E}_{n,n}, s_{1^{n}}\rangle$. By Proposition \ref{Prop : perp sur chemin} this can be done directly in terms of paths.

 For $k$ such that $0\leq k \leq n-2$ and $\mu=(k+1,1^{n-k-1})$, we consider the following sets of paths:
\begin{equation*}\bigcup_{\tau\in \SYT(\mu)}\{\gamma=N^j\tilde\gamma\in T_{n,\dd(\tau')}~|~j\geq n-k-\textrm{min}( \M(\tau'))\}=T_{n,k}^+
\end{equation*}
 \begin{equation*}\bigcup_{\tau\in \SYT(\mu)}\{\gamma=N^jE\tilde\gamma, \gamma=N^j \in T_{n,\dd(\tau')} ~|~j <n-k- \textrm{min}( \M(\tau'))\}=T_{n,k}^-
 \end{equation*}
 
 \begin{align*} V_{n,k}=&\underset{1\in\DD(\tau')}{\bigcup_{\tau\in \SYT(\mu)}} \left\{\gamma=N^{j}E\tilde\gamma, \gamma=N^j \in T_{n,\dd(\tau')} ~|~\max\{0,n-k-\min(\DD(\tau')\backslash\{1\})\leq j\right\}
\\				&\cup\underset{1\not\in\DD(\tau'), \{n-k+1,\ldots,n-1\}\subset \DD(\tau')}{\bigcup_{\tau\in \SYT(\mu)}} \left\{\gamma=E^{r}\tilde\gamma\in T_{n,\dd(\tau')} ~|~r+1\geq\min(\DD(\tau'))\right\}
\end{align*}
and
\begin{equation*}T_n^k=\{\gamma\in T_{n,0} ~| ~ n-2-\h(\gamma)\geq k\}.
\end{equation*}
\\

 Notice that in for all $\tau\in \SYT(k+1,1^{n-k-1})$, $\dd(\tau')=k$. Hence, all paths of  $T_{n,\dd(\tau')}$ have at most $n-k-2$ north steps. Therefore, if $1\in\DD(\tau')$ then $T_{n,k}^+\cap T_{n,\dd(\tau')}=\emptyset$ and $T_{n,k}^-\cap T_{n,\dd(\tau')}=T_{n,k}^-$. Additionally, one can easily  check that for $\mu=(k+1,1^{n-k-1})$ the sets $T_{n,k}^+$, and $ T_{n,k}^-$ are a partition of the set $\bigcup_{\tau\in \SYT(\mu)}  T_{n,\dd(\tau')}$ and  $V_{n,k}\subset T_{n,k}^-$.
 
   For $k$ between $1$ and $n-2$, we will now define two families $\{\underline{e_{k-}^\perp}\}$ and $\{\underline{e_{k+}^\perp}\}$ of maps:
\begin{equation*} \underline{e_{k-}^\perp}:T_n^{k-1}\backslash\{E^{n-2}\} \rightarrow V_{n,k}\subset T_{n,k}^-~ \text{ and }~ \underline{e_{k+}^\perp}:T_n^{k} \rightarrow T_{n,k}^+
\end{equation*}

For $\gamma\in T_{n,0}$, let us consider the prefix of $\gamma$ ending with the $k$-th east step. The prefix exists by definition of $T_n^k$, since $n-2-\h(\gamma)$ gives the number of east steps. Let  $p_1,\ldots,p_k$ denote the integers such that $p_i$ is the number of north steps before the $i$-th east step. To this we associate $\tau'$, the hook shaped standard tableau such that $\DD(\tau')=\{ n-i-p_i~|~1\leq i\leq k\}$. In consequence $\underline{e_{k+}^\perp}(\gamma)$ is the path in $T_{n,\dd(\tau')}$ given by discarding all the $k$ first east steps of $\gamma$. (See Figure \ref{Fig : perp colonne})

\begin{figure}[h!]
\begin{minipage}{7cm}
\centering
\begin{tikzpicture}[scale=.4]
\filldraw[pink] (-1,0)--(-1,3)--(0,3)--(0,0)--(-1,0);
\filldraw[pink]  (0,0)--(0,1)--(1,1)--(1,2)--(2,2)--(3,2)--(3,3)--(5,3)--(6,3)--(6,2)--(7,2)--(7,1)--(8,1)--(8,0)--(0,0);
\node(h) at (-.5,1.5){$\h$};
\draw[thin, gray] (0,0)--(0,8)--(1,8)--(1,7)--(2,7)--(2,6)--(3,6)--(3,5)--(4,5)--(4,4)--(5,4)--(5,3)--(6,3)--(6,2)--(7,2)--(7,1)--(8,1)--(8,0)--(0,0);
\draw[thin, gray] (1,0)--(1,7);
\draw[thin, gray] (2,0)--(2,6);
\draw[thin, gray] (3,0)--(3,5);
\draw[thin, gray] (4,0)--(4,4);
\draw[thin, gray] (5,0)--(5,3);
\draw[thin, gray] (6,0)--(6,2);
\draw[thin, gray] (7,0)--(7,1);
\draw[thin, gray] (0,1)--(7,1);
\draw[thin, gray] (0,2)--(6,2);
\draw[thin, gray] (0,3)--(5,3);
\draw[thin, gray] (0,4)--(4,4);
\draw[thin, gray] (0,5)--(3,5);
\draw[thin, gray] (0,6)--(2,6);
\draw[thin, gray] (0,7)--(1,7);
\draw[blue, thick]  (0,0)--(0,1)--(1,1)--(1,2)--(2,2)--(3,2)--(3,3)--(5,3);
\draw (2,-1)--(2,9);
\draw (0,-1)--(0,9);
\node(d) at (0,0){\textcolor{blue}{$\bullet$}};
\node(f1) at (8,0){\textcolor{red}{$\bullet$}};
\node(f2) at (7,1){\textcolor{red}{$\bullet$}};
\node(f3) at (6,2){\textcolor{red}{$\bullet$}};
\node(f4) at (5,3){\textcolor{red}{$\bullet$}};
\node(f5) at (4,4){\textcolor{red}{$\bullet$}};
\node(f6) at (3,5){\textcolor{red}{$\bullet$}};
\node(f4) at (2,6){\textcolor{red}{$\bullet$}};
\node(f5) at (1,7){\textcolor{red}{$\bullet$}};
\node(f6) at (0,8){\textcolor{red}{$\bullet$}};
\node(gamma) at (-3,4){$\gamma=$};
\end{tikzpicture}
\end{minipage}
\begin{minipage}{8cm}
\centering
\begin{tikzpicture}[scale=.4]
\filldraw[pink] (-1,2)--(-1,5)--(0,5)--(0,2)--(-1,2);
\node(h) at (-.5,3.5){$\h$};
\filldraw[pink] (0,2)--(0,4)--(1,4)--(1,5)--(4,5)--(4,4)--(5,4)--(5,3)--(6,3)--(6,2)--(7,2)--(7,1)--(8,1)--(8,0)--(7,0)--(7,1)--(5,1)--(5,2)--(0,2);
\draw[thin, gray] (0,2)--(0,8)--(1,8)--(1,7)--(2,7)--(2,6)--(3,6)--(3,5)--(4,5)--(4,4)--(5,4)--(5,3)--(6,3)--(6,2)--(7,2)--(7,1)--(8,1)--(8,0)--(7,0)--(7,1)--(5,1)--(5,2)--(0,2);
\draw[thin, gray] (1,2)--(1,7);
\draw[thin, gray] (2,2)--(2,6);
\draw[thin, gray] (3,2)--(3,5);
\draw[thin, gray] (4,2)--(4,4);
\draw[thin, gray] (5,2)--(5,3);
\draw[thin, gray] (6,1)--(6,2);
\draw[thin, gray] (7,0)--(7,1);
\draw[thin, gray] (5,1)--(7,1);
\draw[thin, gray] (0,2)--(6,2);
\draw[thin, gray] (0,3)--(5,3);
\draw[thin, gray] (0,4)--(4,4);
\draw[thin, gray] (0,5)--(3,5);
\draw[thin, gray] (0,6)--(2,6);
\draw[thin, gray] (0,7)--(1,7);
\draw[blue, thick]  (0,2)--(0,4)--(1,4)--(1,5)--(3,5);
\draw[pink] (-1,0)--(7,0)--(7,1)--(5,1)--(5,2)--(-1,2)--(-1,0);
\node(t) at (2,1){$\m(\tau')$};
\node(d) at (0,2){\textcolor{blue}{$\bullet$}};
\node(f1) at (8,0){\textcolor{red}{$\bullet$}};
\node(f2) at (7,1){\textcolor{red}{$\bullet$}};
\node(f3) at (6,2){\textcolor{red}{$\bullet$}};
\node(f4) at (5,3){\textcolor{red}{$\bullet$}};
\node(f5) at (4,4){\textcolor{red}{$\bullet$}};
\node(f6) at (3,5){\textcolor{red}{$\bullet$}};
\node(f4) at (2,6){\textcolor{red}{$\bullet$}};
\node(f5) at (1,7){\textcolor{red}{$\bullet$}};
\node(f6) at (0,8){\textcolor{red}{$\bullet$}};
\node(gamma) at (-4,4){$\underline{e_{2+}^\perp}(\gamma)=$};
\end{tikzpicture}
\end{minipage}
\caption{The map $\underline{e_{2+}^\perp}$ send the path $\gamma=NENEENEE \in T_{10,0}$ to the path $NNENEE\in T_{10,\dd(\tau')}$, with $\DD(\tau')=\{6,8\}$ }\label{Fig : perp colonne}.
\end{figure}

The map $\underline{e_{k-}^\perp}(\gamma)$ is defined in a similar way. For $\gamma\in T_{n,0}$, let us consider the prefix of $\gamma$ ending with the $k-1$-th east step. We denote by $p_1,\ldots,p_{k-1}$ the integers such that $p_i$ is the number of north steps before the $i$-th east step. Let $h$ be the number of east steps before the first north step. We choose $\tau'$ to be the the hook shaped standard tableau that has the following descent set $\DD(\tau')=\{ n-i-p_i~|~1\leq i\leq k-1\}\cup\{\textrm{max}(1,h-k+2)\}$. Consequently $\underline{e_{k-}^\perp}(\gamma)$ is the path in $T_{n,\dd(\tau')}$ given by discarding all the $k-1$ first east steps of and the first north step of $\gamma$.  (See Figure \ref{Fig : perp 1ere ligne}). Note that we take out the path $E^{n-2}$ because it is associated to the Schur function $s_{1^n}$ and $e_k^\perp(s_{1^n})$ has only one term.

\begin{figure}[h!]
\begin{minipage}{7cm}
\centering
\begin{tikzpicture}[scale=.4]
\filldraw[pink] (-1,0)--(-1,3)--(0,3)--(0,0)--(-1,0);
\filldraw[pink]  (0,0)--(0,1)--(1,1)--(1,2)--(2,2)--(3,2)--(3,3)--(5,3)--(6,3)--(6,2)--(7,2)--(7,1)--(8,1)--(8,0)--(0,0);
\node(h) at (-.5,1.5){$\h$};
\draw[thin, gray] (0,0)--(0,8)--(1,8)--(1,7)--(2,7)--(2,6)--(3,6)--(3,5)--(4,5)--(4,4)--(5,4)--(5,3)--(6,3)--(6,2)--(7,2)--(7,1)--(8,1)--(8,0)--(0,0);
\draw[thin, gray] (1,0)--(1,7);
\draw[thin, gray] (2,0)--(2,6);
\draw[thin, gray] (3,0)--(3,5);
\draw[thin, gray] (4,0)--(4,4);
\draw[thin, gray] (5,0)--(5,3);
\draw[thin, gray] (6,0)--(6,2);
\draw[thin, gray] (7,0)--(7,1);
\draw[thin, gray] (0,1)--(7,1);
\draw[thin, gray] (0,2)--(6,2);
\draw[thin, gray] (0,3)--(5,3);
\draw[thin, gray] (0,4)--(4,4);
\draw[thin, gray] (0,5)--(3,5);
\draw[thin, gray] (0,6)--(2,6);
\draw[thin, gray] (0,7)--(1,7);
\draw[blue, thick]  (0,0)--(0,1)--(1,1)--(1,2)--(2,2)--(3,2)--(3,3)--(5,3);
\draw (1,-1)--(1,9);
\draw (0,-1)--(0,9);
\node(d) at (0,0){\textcolor{blue}{$\bullet$}};
\node(f1) at (8,0){\textcolor{red}{$\bullet$}};
\node(f2) at (7,1){\textcolor{red}{$\bullet$}};
\node(f3) at (6,2){\textcolor{red}{$\bullet$}};
\node(f4) at (5,3){\textcolor{red}{$\bullet$}};
\node(f5) at (4,4){\textcolor{red}{$\bullet$}};
\node(f6) at (3,5){\textcolor{red}{$\bullet$}};
\node(f4) at (2,6){\textcolor{red}{$\bullet$}};
\node(f5) at (1,7){\textcolor{red}{$\bullet$}};
\node(f6) at (0,8){\textcolor{red}{$\bullet$}};
\node(gamma) at (-3,4){$\gamma=$};
\end{tikzpicture}
\end{minipage}
\begin{minipage}{8cm}
\centering
\begin{tikzpicture}[scale=.4]
\filldraw[pink] (-1,2)--(-1,4)--(0,4)--(0,2)--(-1,2);
\node(h) at (-.5,3){$\h$};
\filldraw[pink] (0,1)--(0,3)--(2,3)--(2,4)--(4,4)--(5,4)--(5,3)--(6,3)--(6,2)--(7,2)--(7,1)--(8,1)--(8,0)--(7,0)--(7,1)--(0,1)--(0,2);
\draw[thin, gray] (0,2)--(0,8)--(1,8)--(1,7)--(2,7)--(2,6)--(3,6)--(3,5)--(4,5)--(4,4)--(5,4)--(5,3)--(6,3)--(6,2)--(7,2)--(7,1)--(8,1)--(8,0)--(7,0)--(7,1)--(0,1)--(0,2);
\draw[thin, gray] (1,1)--(1,7);
\draw[thin, gray] (2,1)--(2,6);
\draw[thin, gray] (3,1)--(3,5);
\draw[thin, gray] (4,1)--(4,4);
\draw[thin, gray] (5,1)--(5,3);
\draw[thin, gray] (6,1)--(6,2);
\draw[thin, gray] (7,0)--(7,1);
\draw[thin, gray] (5,1)--(7,1);
\draw[thin, gray] (0,2)--(6,2);
\draw[thin, gray] (0,3)--(5,3);
\draw[thin, gray] (0,4)--(4,4);
\draw[thin, gray] (0,5)--(3,5);
\draw[thin, gray] (0,6)--(2,6);
\draw[thin, gray] (0,7)--(1,7);
\draw[blue, thick]  (0,2)--(0,3)--(2,3)--(2,4)--(4,4);
\draw[pink] (-1,0)--(7,0)--(7,1)--(5,1)--(5,2)--(-1,2)--(-1,0);
\node(t) at (2,.5){$\m(\tau')$};
\node(d) at (0,2){\textcolor{blue}{$\bullet$}};
\node(f1) at (8,0){\textcolor{red}{$\bullet$}};
\node(f2) at (7,1){\textcolor{red}{$\bullet$}};
\node(f3) at (6,2){\textcolor{red}{$\bullet$}};
\node(f4) at (5,3){\textcolor{red}{$\bullet$}};
\node(f5) at (4,4){\textcolor{red}{$\bullet$}};
\node(f6) at (3,5){\textcolor{red}{$\bullet$}};
\node(f4) at (2,6){\textcolor{red}{$\bullet$}};
\node(f5) at (1,7){\textcolor{red}{$\bullet$}};
\node(f6) at (0,8){\textcolor{red}{$\bullet$}};
\node(gamma) at (-4,4){$\underline{e_{2-}^\perp}(\gamma)=$};
\end{tikzpicture}
\end{minipage}
\caption{The map $\underline{e_{2-}^\perp}$ sends the path $\gamma=NENEENEE \in T_{10,0}$ to the path $NEENEE \in T_{10,\dd(\tau')}$, with $\DD(\tau')=\{1,8\}$}.
\label{Fig : perp 1ere ligne}
\end{figure}
We will recall that for $r=1$ the hooks in Theorem \ref{Thm : main} are given by: 
\begin{equation*}\hook(\gamma)=\left(\A(\gamma)+\h(\gamma)-\m(\tau')+1,1^{n-2-\h(\gamma) }\right)
\end{equation*}

\begin{lem}For all $k\leq n-2$ the map $\underline{e_{k+}^\perp}$  is a well defined map.
\end{lem}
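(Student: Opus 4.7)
The plan is to unpack the definition of $\underline{e_{k+}^\perp}$ and verify three conditions: (i) the hook standard tableau $\tau'$ with $\DD(\tau') = \{n - i - p_i : 1 \leq i \leq k\}$ genuinely exists and is uniquely determined; (ii) the word obtained from $\gamma$ by deleting its first $k$ east letters, drawn starting at $(0, k)$, traces a valid path in $T_{n, \dd(\tau')} = T_{n,k}$; and (iii) this path begins with at least $n - k - \min \DD(\tau')$ north steps, so that it lies in $T_{n,k}^+$ and not merely in $T_{n,k}$.

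For (i), since $p_1 \leq p_2 \leq \cdots \leq p_k$, the differences $(n - i - p_i) - (n - (i+1) - p_{i+1}) = 1 + (p_{i+1} - p_i) \geq 1$ show that the values $n - i - p_i$ are strictly decreasing in $i$, so $\DD(\tau')$ has exactly $k$ elements. The extreme bounds $n - k - p_k \geq 2$ and $n - 1 - p_1 \leq n - 1$ follow from $p_1 \geq 0$ and from $p_k + k \leq n - 2$ (since the $k$-th east step is among the $n-2$ letters of $\gamma$). Because $(n-k, 1^k)$ is a hook, a standard Young tableau of that shape is uniquely determined by its descent set (its leg entries are $\{d+1 : d \in \DD(\tau')\}$), so $\tau'$ is well defined.

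For (ii), deleting the first $k$ east letters leaves a word of length $n - 2 - k$ containing $\h(\gamma)$ north letters and $n - 2 - k - \h(\gamma)$ east letters. Starting at $(0, k)$ and tracing this word, the position after $t$ steps is $(b_t,\, k + a_t)$ with $a_t + b_t = t$, so $x + y = k + t \leq n - 2$ throughout, keeping the path inside the staircase. The final position is $(n - 2 - k - \h(\gamma),\, k + \h(\gamma))$, which lies on the line $x + y = n - 2$ with $y \geq k$, so the path belongs to $T_{n,k}$.

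For (iii), the initial run of north steps in the new word consists of all north letters of $\gamma$ that precede its $(k+1)$-st east letter if such a letter exists, or of all $n - 2 - k$ remaining north letters otherwise. These counts are $p_{k+1}$ and $n - 2 - k$ respectively, and both are $\geq p_k$ (from $p_{k+1} \geq p_k$, and from $p_k \leq n - 2 - k$ when $\gamma$ has exactly $k$ east letters). Combined with $\min \DD(\tau') = n - k - p_k$ from (i), this produces an initial run of at least $n - k - \min \DD(\tau')$ north steps, placing $\underline{e_{k+}^\perp}(\gamma)$ in $T_{n,k}^+$. The only delicate point in the whole argument is reconciling the word-level operation of deleting $k$ letters with the geometric operation of shifting the base point upward by $k$; once this identification is made, all three verifications reduce to elementary bookkeeping based on the monotonicity of $(p_i)$.
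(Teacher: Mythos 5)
Your proof is correct and follows essentially the same route as the paper: uniqueness of a hook-shaped standard tableau from its descent set, distinctness and bounds of the values $n-i-p_i$ via the monotonicity of the $p_i$ and $p_k\leq n-2-k$, and the observation that the word obtained by deleting the first $k$ east letters has length $n-2-k$ and begins with at least $p_k=n-k-\min\DD(\tau')$ north steps, hence lands in $T_{n,k}^+$. Your items (ii) and (iii) simply spell out in more detail what the paper asserts in one line, so there is nothing further to add.
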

\begin{proof}

Let us first notice that hook shaped standard tableaux are uniquely determined by there decent set. Indeed the first column is strictly increasing and all other entries are in the first row. Therefore, an entry, $i$  is in the descent set if and only if $i+1$ is not in the first row.  For fixed $k$, $\gamma\in T_n^k$ has at least $k$ East steps, by definition of $T_n^k$, since $n-2-\h(\gamma)$ is the number of east steps. Additionally, we construct $k$ elements in the descent set. This corresponds to the descent set of a unique tableau of shape $(n-k,k)$. The $p_i$'s are weakly increasing and are subtracted from strictly decreasing numbers hence the $k$ numbers of the descent set that we constructed are all distinct and smaller or equal to $n-1$. Moreover, $ \textrm{min}( \M(\tau'))=n-k-p_k\geq 2$, because $p_k\leq n-2-k$. Finally the constructed path is of length $n-2-k$ with as at least $p_k$ North steps, making it an element of $T_{n,k}^+$.
\end{proof}

\begin{lem}\label{Lem : +}For all $k$ the map $\underline{e_{k+}^\perp}$  is a bijection such that:
\begin{equation*}\hook(\underline{e_{k+}^\perp}(\gamma))=\left(\A(\gamma)+\h(\gamma)+1,1^{n-2-\h(\gamma) -k}\right).
\end{equation*}
\end{lem}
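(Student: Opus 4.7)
The plan is to establish the lemma in two stages. First I would exhibit an explicit inverse of $\underline{e_{k+}^\perp}$ to prove bijectivity, then compute the hook of the image by tracking how the area and the height of $\gamma$ transform.

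For bijectivity, given a pair $(\tau',\delta)$ with $\tau'\in\SYT((n-k,1^k))$ and $\delta=N^{j}\tilde{\delta}\in T_{n,k}$, I would write $\DD(\tau')=\{d_1>d_2>\cdots>d_k\}$ and set $p_i:=n-i-d_i$. The strict decrease of the $d_i$ yields $p_{i+1}\ge p_i$, and the constraint $j\ge n-k-\min\DD(\tau')=p_k$ gives $j-p_k\ge 0$. The path
\begin{equation*}
\gamma := N^{p_1}E\,N^{p_2-p_1}E\cdots E\,N^{p_k-p_{k-1}}E\,N^{j-p_k}\,\tilde{\delta}
\end{equation*}
then has at least $k$ East steps, hence lies in $T_n^k$, and by construction $\underline{e_{k+}^\perp}(\gamma)=(\tau',\delta)$; conversely, starting from $\gamma$ the recipe recovers the same data. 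So the two maps are mutually inverse.

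For the hook, the height transformation is immediate: $\delta$ contains the same $\h(\gamma)$ North steps as $\gamma$ but starts at height $k$, so $\h(\underline{e_{k+}^\perp}(\gamma))=k+\h(\gamma)$ and the exponent $n-2-\h(\delta)=n-2-k-\h(\gamma)$ already matches the claimed column-length part. For the area, let $P_j$ denote the number of North steps in $\gamma$ preceding its $j$-th East step (so $P_j=p_j$ for $j\le k$). A column-by-column count in the $(n-2)$-staircase, separating traversed columns (contributing their exit height) from untraversed columns (contributing the full staircase height), gives
\begin{equation*}
\A(\gamma)=\sum_{j=1}^{n-2-\h(\gamma)} P_j+\binom{\h(\gamma)+1}{2}.
\end{equation*}
The image $\delta$ starts at $(0,k)$ and has exit height $k+P_{k+i+1}$ from its $i$-th column, so
\begin{equation*}
\A(\delta)=k\bigl(n-2-k-\h(\gamma)\bigr)+\sum_{j=k+1}^{n-2-\h(\gamma)} P_j+\binom{k+\h(\gamma)+1}{2}.
\end{equation*}
Using the elementary identity $\binom{k+h+1}{2}-\binom{h+1}{2}=\binom{k+1}{2}+kh$ together with $\m(\tau')=kn-\binom{k+1}{2}-\sum_{i=1}^{k}p_i$, a short simplification yields $\A(\delta)-\A(\gamma)=\m(\tau')-k$.

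Substituting these into $\hook(\delta)=\bigl(\A(\delta)+\h(\delta)-\m(\tau')+1,\,1^{n-2-\h(\delta)}\bigr)$, the $\m(\tau')$'s cancel and the two $\pm k$ contributions offset, leaving the claimed $\bigl(\A(\gamma)+\h(\gamma)+1,\,1^{n-2-\h(\gamma)-k}\bigr)$. The main obstacle is the area bookkeeping---carefully isolating the first $k$ columns (where the deleted East steps lived) from the suffix columns, and then reconciling the arithmetic via the $\binom{k+h+1}{2}-\binom{h+1}{2}$ identity. Once that decomposition is in hand, the rest is a routine algebraic substitution.
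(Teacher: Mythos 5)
Your proposal is correct and follows essentially the same route as the paper: the explicit inverse you build from $\DD(\tau')$ (setting $p_i=n-i-d_i$ and re-inserting the $k$ East steps) is exactly the paper's surjectivity construction, combined with the same observation that the image determines the tableau and hence the prefix. The only difference is cosmetic: where the paper justifies $\A(\underline{e_{k+}^\perp}(\gamma))-\A(\gamma)=\m(\tau')-k$ by noting that $\m(\tau')-k$ counts the boxes above the path in the first $k$ columns, you verify the same shift by an explicit column-by-column area formula and the identity $\binom{k+h+1}{2}-\binom{h+1}{2}=\binom{k+1}{2}+kh$ (note a harmless off-by-one in your phrase ``exit height $k+P_{k+i+1}$''; the displayed sums are the correct ones).
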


\begin{proof}
Let $\gamma, \beta \in T_n$ such that $\underline{e_{k+}^\perp}(\gamma)=\underline{e_{k+}^\perp}(\beta)$. Let $p_1,\ldots,p_k$ (respectively $r_1\ldots r_k$) be integers such that $p_i$ (respectively $r_i$) gives the number of North steps before the $i$-th East step in $\gamma$ (respectively $\beta$). Because $\underline{e_{k+}^\perp}(\gamma)=\underline{e_{k+}^\perp}(\beta)$ they are associated to the same descent set, ergo the same tableau (this is only true when the tableau is hook shaped). We must have $r_i=p_i$ for all $i$. This mean the paths $\gamma$ and $\beta$ are identical up to the $k$-th east step. But the rest of the paths are also identical as a result of $\underline{e_{k+}^\perp}(\gamma)=\underline{e_{k+}^\perp}(\beta)$. So $\gamma=\beta$ and $\underline{e_{k+}^\perp}$ is an injection.

Given a path $\gamma$ in $T_{n,k}^+$ with associated tableau $\tau$ we can construct $p_1,\ldots, p_k$ by ordering the set $\{n-\pi ~|~ \pi\in\dd(\tau')\}$ and subtracting the $i$-th number by $i$. Since $\gamma\in T_{n,k}^+$ we know $\gamma=N^{p_k}\gamma'$. Hence, we can construct the path $\beta=N^{p_1}EN^{p_2-p_1}E\cdots EN^{p_k-p_{k-1}}E\gamma'$. It is easy to see that $\underline{e_{k+}^\perp}(\beta)=\gamma$ is a consequence of $\underline{e_{k+}^\perp}$ taking out the $k$ first East steps.

Because we start at height $k$ in $T_{n,\dd(\tau')}$, by only erasing East steps we increase the height of $\underline{e_{k+}^\perp}(\gamma)$ by exactly $k$. For this reason we have $\h(\gamma)=\h(\underline{e_{k+}^\perp}(\gamma))-k$. Notice that $\m(\tau')-k$ corresponds to the number of boxes over the part of the path in the first $k$ columns. Therefore, $\A(\gamma)=\A(\underline{e_{k+}^\perp}(\gamma))-\m(\tau')+k$ and we have the claimed equality.
\end{proof}

We obtain a similar result for $\underline{e_{k-}^\perp}$.

\begin{lem}For all $k\leq n-2$ the map $\underline{e_{k-}^\perp}$  is a well defined map.
\end{lem}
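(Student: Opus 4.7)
The plan is to verify three items for each $\gamma\in T_n^{k-1}\setminus\{E^{n-2}\}$: (i) the proposed descent set $\DD(\tau')$ determines a unique hook-shaped standard Young tableau of shape $(n-k,1^k)$; (ii) the word obtained by deleting the first $k-1$ east steps and the first north step of $\gamma$ encodes a legitimate path in $T_{n,\dd(\tau')}=T_{n,k}$; and (iii) this path lies in the target subset $V_{n,k}$. The argument closely parallels the preceding lemma for $\underline{e_{k+}^\perp}$, with extra bookkeeping coming from the removed north step and the $\max(1,h-k+2)$ summand.

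For (i), the exclusion $\gamma\neq E^{n-2}$ guarantees that $\gamma$ contains at least one north step, so $h$ and the first N are well defined; membership in $T_n^{k-1}$ gives at least $k-1$ east steps, so $p_1,\ldots,p_{k-1}$ are defined. The weak monotonicity $p_1\leq\cdots\leq p_{k-1}$ forces $n-i-p_i$ to be strictly decreasing in $i$, producing $k-1$ distinct values, and the length constraint $p_i+i\leq n-2$ gives $n-i-p_i\geq 2$. The extra element $\max(1,h-k+2)$ is then distinct from these: if $h<k-1$ it equals $1$, which avoids a set contained in $\{2,\ldots,n-1\}$; if $h\geq k-1$ then all $p_i$ vanish, so $\{n-i-p_i\}=\{n-k+1,\ldots,n-1\}$, while $h\leq n-3$ yields $h-k+2\leq n-k-1<n-k+1$. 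Hence $|\DD(\tau')|=k$, and the standard bijection between size-$k$ subsets of $\{1,\ldots,n-1\}$ and hook SYTs of shape $(n-k,1^k)$ delivers a unique $\tau'$ with $\dd(\tau')=k$.

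For (ii), the resulting word has length $(n-2)-(k-1)-1=n-k-2$, matching paths in $T_{n,k}$, and any such NE-word automatically traces a valid path in the staircase when started at $(0,k)$. The real work is (iii), which I would handle by splitting on $h$. When $h\geq k$, one has $1\notin\DD(\tau')$ and $\{n-k+1,\ldots,n-1\}\subset\DD(\tau')$; the image starts with exactly $r=h-k+1$ east steps and $\min(\DD(\tau'))=h-k+2=r+1$, placing it in the second component of $V_{n,k}$ with equality. When $h\leq k-1$, one has $1\in\DD(\tau')$; the image begins with $j=p_k-1$ north steps if $\gamma$ has at least $k$ east steps (letting $p_k$ denote the count of north steps before the $k$-th east step), and equals $N^{n-k-2}$ if $\gamma$ has exactly $k-1$ east steps. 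A direct calculation gives $n-k-\min(\DD(\tau')\setminus\{1\})=p_{k-1}-1$ in the generic sub-case, so the required bound $j\geq\max(0,p_{k-1}-1)$ follows from the monotonicity $p_k\geq p_{k-1}$; in the degenerate sub-case it follows from $p_{k-1}\leq n-k-1$.

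The main obstacle is the case $h<k-1$ in step (iii): here the removed north step and some of the removed east steps are interleaved inside $\gamma$, forcing one to track that the initial N-run of the image is controlled by $p_k$ rather than $p_{k-1}$, while the $V_{n,k}$ bound is phrased in terms of $p_{k-1}$. Once this identification is made carefully, the monotonicity of the $p_i$'s supplies the necessary inequality, and the remaining casework proceeds in direct analogy with the $\underline{e_{k+}^\perp}$ lemma.
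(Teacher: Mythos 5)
Your proposal is correct and follows essentially the same route as the paper: show the constructed set $\{n-i-p_i\}\cup\{\max(1,h-k+2)\}$ consists of $k$ distinct values in $\{1,\ldots,n-1\}$ (hence a unique hook tableau), check the image has length $n-k-2$, and split on $h\leq k-1$ versus $h\geq k$, using $\min(\DD(\tau')\backslash\{1\})=n-k+1-p_{k-1}$ together with $p_k\geq p_{k-1}$ in the first case and $r+1=h-k+2=\min(\DD(\tau'))$ in the second to land in the appropriate component of $V_{n,k}$. The only nitpick — claiming the image starts with \emph{exactly} $h-k+1$ east steps when it may start with more — is harmless since the $V_{n,k}$ condition is an inequality, and the paper phrases it the same way.
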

\begin{proof}

Has before hook shaped standard tableaux are uniquely determined by there decent set. For fixed $k$, $\gamma\in T_n^{k-1}\backslash \{E^n-2\}$ has at least $k-1$ East steps, by definition. Furthermore, the $k$ numbers of the constructed descent set are smaller or equal than $n-1$.  We need to show that they are all distinct and the $k$ elements in the descent set will corresponds to the descent set of a unique tableau of shape $(k+1,1^{n-k-1})'$. Before we do so, let $\gamma$ be in $T_{n}^{k-1}\backslash \{E^{n-2}\}$. 

If $h\leq k-1$ then $\underline{e_{k-}^\perp}(\gamma)$ is associated to a tableau such that $1\in\DD(\tau')$. Since $\gamma$ as at least $k-1$ East steps by definition of $T_n^{k-1}$ we know that $p_{k-1}\leq n-k-1$. Hence, $\min(\DD(\tau')\backslash\{1\})\geq 2$. The $p_i$'s are weakly increasing and are subtracted from strictly decreasing numbers, in consequence the elements created for are descent set are all distinct. Moreover, $n-k+1- p_{k-1}=\min(\DD(\tau')\backslash\{1\})$  and $h\leq k-1$ which yields $p_{k}-1\geq 0$. So we have $p_k \geq n-k+1-\min(\DD(\tau')\backslash\{1\})$. Thus $\underline{e_{k-}^\perp}(\gamma)$ is in $V_{n,k}$.

If $h>k-1$ then  $p_{k-1}=0$. The height of the path is bounded by the relation $h\leq n-2$ for this reason $n-(k-1)>h-k-2$. Ergo $\underline{e_{k-}^\perp}(\gamma)$ can be associated with a tableau such that $\min(\DD(\tau'))=h-k+2>1$. The $p_i=0$ for all $i$ such that $1\leq i\leq k-1$, thus we have constructed $k$ distinct elements for the descent set.

Furthermore, the path begins with $h-k+1$ east steps by definition of the map. Consequently $\underline{e_{k-}^\perp}(\gamma)$ is in $V_{n,k}$. The erased steps do not depend on the maximum value so one might notice that in the case $1=h-k+2$ we obtain the same tableau and the same path wether we "choose" $1$ or $h-k+2$. Therefore, the map is well defined. 

\end{proof}

\begin{lem}\label{Lem : -}For all $k$ the map $\underline{e_{k-}^\perp}$  is a bijection such that:
\begin{equation*}\hook(\underline{e_{k-}^\perp}(\gamma))=\left(\A(\gamma)+\h(\gamma),1^{n-1-\h(\gamma) -k}\right).
\end{equation*}
\end{lem}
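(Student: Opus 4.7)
The proof will mirror the three-part structure used for Lemma~\ref{Lem : +}: first injectivity, then surjectivity, then verification of the hook-length identity. The only genuinely new ingredient compared to $\underline{e_{k+}^\perp}$ is that we additionally discard the first north step of $\gamma$, which introduces the extra descent $\max(1, h - k + 2)$ in $\DD(\tau')$ and forces a case split throughout.

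For injectivity I would argue as follows. Suppose $\underline{e_{k-}^\perp}(\gamma) = \underline{e_{k-}^\perp}(\beta)$. A hook shaped standard Young tableau is determined by its descent set, so both paths produce the same tableau $\tau'$. From $\DD(\tau')$ I recover $p_1, \ldots, p_{k-1}$ and $h$ via the same case analysis used in the well-definedness lemma: if $1 \in \DD(\tau')$ then $h$ equals the number of $p_i$'s equal to $0$; otherwise $h = \min(\DD(\tau')) + k - 2$. These data pin down the relative positions of the first $k - 1$ east steps and the first north step inside $\gamma$, and the common image then determines every remaining step, so $\gamma = \beta$. For surjectivity, given $\gamma^* \in V_{n, k}$ with associated tableau $\tau'$, I split off the extra descent of $\DD(\tau')$, sort the remaining $k - 1$ descents in decreasing order to read $p_i = n - i - (\text{$i$th descent})$, recover $h$ as above, and reinsert $k - 1$ east steps and one north step into $\gamma^*$ at the positions dictated by these parameters. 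The inclusion conditions in the definition of $V_{n, k}$ are exactly what is needed to guarantee that the resulting path $\gamma$ lies in $T_n^{k-1} \setminus \{E^{n-2}\}$, and by construction $\underline{e_{k-}^\perp}(\gamma) = \gamma^*$.

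For the hook formula, the tail is immediate: $\underline{e_{k-}^\perp}(\gamma)$ starts at height $k$ and has $\h(\gamma) - 1$ north steps, so $\h(\underline{e_{k-}^\perp}(\gamma)) = k + \h(\gamma) - 1$, matching the claimed $1^{n - 1 - \h(\gamma) - k}$. Unfolding the definition $\hook = (\A + \h - \m(\tau') + 1, 1^{n - 2 - \h})$ on the image side reduces the first entry to the area identity
\[
\A(\underline{e_{k-}^\perp}(\gamma)) = \A(\gamma) - k + \m(\tau').
\]
I would establish this by a direct geometric count: decompose the boxes south-east of the image into the trapezoid strictly below $y = k$ and the boxes at height $\geq k$, and compare with the south-east boxes of $\gamma$. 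Each discarded east step contributes the corresponding $n - i - p_i$ to $\m(\tau')$ while the discarded north step contributes the $\max(1, h - k + 2)$ term, and the upward shift of the starting row from $0$ to $k$ introduces the compensating subtraction of $k$.

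The main obstacle is this final area identity. Matching the south-east boxes of $\gamma$ with those of its image while accounting for the non-contiguous nature of the discarded steps and for the reinterpretation of the residual sequence in a staircase rooted at $(0, k)$ requires case-by-case bookkeeping through both subcases defining $V_{n, k}$; once it is verified, the rest of the argument assembles immediately from the template of Lemma~\ref{Lem : +}.
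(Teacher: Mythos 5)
Your proposal is correct and follows essentially the same route as the paper's proof: injectivity by recovering the $p_i$'s and $h$ from $\DD(\tau')$ with the same case split on $\min(\DD(\tau'))$, surjectivity by explicitly reinserting the $k-1$ east steps and the deleted north step (the paper just writes these preimage words out in both cases), and the hook formula from the height shift $\h(\underline{e_{k-}^\perp}(\gamma))=\h(\gamma)+k-1$ together with an area comparison. Moreover, the area identity you target, $\A(\underline{e_{k-}^\perp}(\gamma))=\A(\gamma)+\m(\tau')-k$, is the correct one --- it is exactly what the stated hook and the height shift force, and it checks out on the paper's own example $\gamma=NENEENEE$, $k=2$, where $\A(\gamma)=17$, $\A(\underline{e_{2-}^\perp}(\gamma))=24$, $\m(\tau')=9$ --- whereas the paper's last line records it as $\A(\gamma)=\A(\underline{e_{k-}^\perp}(\gamma))-\m(\tau')+k-1$, an off-by-one slip in its bookkeeping, so your version of the count is the one that actually closes the argument.
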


\begin{proof}
We start by showing the map is injective, let $\gamma, \beta \in T_n\backslash\{E^{n-2}\}$ such that $\underline{e_{k-}^\perp}(\gamma)=\underline{e_{k}^\perp}(\beta)$. Let $p_1,\ldots,p_{k-1}$ (respectively $r_1\ldots r_{k-1}$) be integers such that $p_i$ (respectively $r_i$) gives the number of North steps before the $i$-th East step in $\gamma$ (respectively $\beta$). Let $h_\gamma$  (respectively $h_\beta$) be the number of East steps before the first North step in $\gamma$  (respectively $\beta$). It was proven in the previous lemma that $h=\min(\DD(\tau'))+k-2$ or $h=\leq k-1$. Therefore, if $1=\min(\DD(\tau'))$ we have  $h_\gamma \leq k-1$,  $h_\beta \leq k-1$ and the proof is very similar to Lemma \ref{Lem : +}. If $\min(\DD(\tau'))>1$, we have $h_\gamma-k+2=\min(\DD(\tau'))$ and $h_\beta-k+2=\min(\DD(\tau'))$. Ergo $h_\gamma=h_\beta$ and all the $p_i=0$, $1\leq i\leq k-1$. Consequently $\gamma$ and $\beta$ have the same number of East steps before the first North step and the paths after the first North step are the same since $\underline{e_{k-}^\perp}(\gamma)=\underline{e_{k}^\perp}(\beta)$.  Hence, $\gamma=\beta$

We now show that the map is surjective.  Let $\gamma$ be a path in $V_{n,k}$. The set $V_{n,k}$ is a union of sets of paths therefore it can be associated to the tableau $\tau$ corresponding to the set it came from in the union. If $1\in\DD(\tau')$ let $\{1<d_2<\cdots<d_k\}$ be the descent set of $\tau'$ and let $\gamma=N^{j}E\tilde\gamma$ (respectively $\gamma=N^{n-2-k}$). Remember that by definition of $V_{n,k}$ the length of the path is $n-2-k$. Then the path: 
\begin{equation*}
N^{n-1-d_k}EN^{d_k-d_{k-1}-1}EN^{d_{k-1}-d_{k-2}-1}E\cdots N^{d_{3}-d_{2}-1}EN^{j-n+k+d_2}E\tilde\gamma
\end{equation*}
(respectively $N^{n-1-d_k}EN^{d_k-d_{k-1}-1}EN^{d_{k-1}-d_{k-2}-1}E\cdots N^{d_{3}-d_{2}-1}EN^{d_2-2}$) is of length $n-2$ since we only added $k-1$ East steps and $1$ North steps. Hence, $\gamma$ is in $T_n$ because $j\geq n-k-d_2$ (respectively $n-k-2\geq n-k-d_2$) by definition of $V_{n,k}$. Moreover, there are $k$ east steps before $\tilde\gamma$ and $j+1$ north steps (respectively $k-1$ east steps and $n-k-1$ north steps) therefore:
 \begin{equation*}  \underline{e_{k-}^\perp}(N^{n-1-d_k}EN^{d_k-d_{k-1}-1}EN^{d_{k-1}-d_{k-2}-1}E\cdots N^{d_{3}-d_{2}-1}EN^{j-n+k+d_2}E\tilde\gamma)=N^{j}E\tilde\gamma=\gamma\in T_{n,\dd(\tau')}.
 \end{equation*}
 (respectively $\underline{e_{k-}^\perp}(N^{n-1-d_k}EN^{d_k-d_{k-1}-1}EN^{d_{k-1}-d_{k-2}-1}E\cdots N^{d_{3}-d_{2}-1}EN^{d_2-2})=N^{n-k-2}=\gamma$)
 
  If $1\not\in\DD(\tau')$ then $\{d_1<n-k+1<\cdots<n-1\}$ is the descent set of $\tau'$ and let $\gamma=E^{r}\tilde\gamma$. This means the path: 
\begin{equation*}
E^{d_1+k-2}NE ^{r+1-d_1}\tilde\gamma
\end{equation*}
 is in $T_n$ because $r+1\geq d_1$ by definition of $V_{n,k}$. Moreover, there are $r+k-1$ east steps before $\tilde\gamma$ and $1$ north steps therefore:
 \begin{equation*}  \underline{e_{k-}^\perp}(E^{d_1+k-2}NE ^{r+1-d_1}\tilde\gamma)=\gamma\in T_{n,\dd(\tau')}.
 \end{equation*}
Thus $ \underline{e_{k-}^\perp}$ is a bijection.

By erasing the $k-1$ first East steps and the first North step we increase the height of $\underline{e_{k-}^\perp}(\gamma)$ by exactly $k-1$, since we start at height $k$ in $T_{n,\dd(\tau')}$. Hence, $\h(\gamma)=\h(\underline{e_{k-}^\perp}(\gamma))-k+1$. Notice that $\m(\tau')-k+1-\textrm{max}(1,h-k+2)$ corresponds to the number of boxes over the part of the path in the first $k-1$ columns and $\textrm{max}(1,h-k+2)$ correspond to the boxes filled by deleting the first North step. Therefore, $\A(\gamma)=\A(\underline{e_{k-}^\perp}(\gamma))-\m(\tau')+k-1$ and we have the claimed equality.
\end{proof}

Note that one could add conditions to $T_{n,n-1}^+$, $T_{n,n-1}^-$, $V_{n,n-1}$ and $T_n^{n-1}$ to add the case $\mu=(n)$ to the map. The author thinks that it is a lots of commotion just to state that $e_{n-1}^\perp(s^{1^n})=1$.

For the next proposition we extend our maps in the following way $\underline{e_{k+}^\perp}(\gamma)=\emptyset$ if $\gamma\in T_n\backslash T_n^k$, $\underline{e_{k-}^\perp}(\gamma)=\emptyset$ if $\gamma\in T_n\backslash T_n^{k-1}$ and $s_{\hook(\emptyset)}=0$. Observe that $\emptyset$ is not the empty word.
\begin{prop}\label{Prop : perp sur chemin} For all $k\leq n-2$, we have: 
\begin{equation*}e_k^\perp\left(\langle\mathcal{E}_{n,n}, e_n\rangle|_{\hooks}\right)=\sum_{\gamma \in T_{n}} s_{\hook(\underline{e_{k+}^\perp}(\gamma))}+s_{\hook(\underline{e_{k-}^\perp}(\gamma))}.
\end{equation*}
In addition, $\sum_{\tau\in \SYT(k+1,1^{n-k-1})} \sum_{\gamma \in T_{n, \dd(\tau')}} s_{\hook(\gamma)}-e_k^\perp\left(\langle\mathcal{E}_{n,n}, s_{1^{n}}\rangle|_{\hooks}\right)$ has a Schur positive expansion.
\end{prop}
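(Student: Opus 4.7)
My plan is to apply the adjoint dual Pieri rule directly to the expression from Proposition \ref{Prop : alternant rn} and then identify the resulting terms via the maps $\underline{e_{k\pm}^\perp}$. Starting from
\[\langle\mathcal{E}_{n,n},e_n\rangle|_{\hooks}=\sum_{\gamma\in T_n}s_{\hook(\gamma)},\qquad\hook(\gamma)=(a,1^j)\text{ with }a=\A(\gamma)+\h(\gamma)+1,\ j=n-2-\h(\gamma),\]
I would invoke the Pieri rule on hooks, $e_k^\perp s_{(a,1^j)}=s_{(a,1^{j-k})}+s_{(a-1,1^{j-k+1})}$, adopting the convention (already used in Lemma \ref{Lem : altern}) that a tuple with a non-positive first part or a negative exponent contributes $0$. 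The first summand is nonzero exactly when $j\geq k$, i.e.\ when $\gamma\in T_n^k$; and since $a\geq 2$ whenever $\gamma\neq E^{n-2}$, the second summand is nonzero exactly when $\gamma\in T_n^{k-1}\setminus\{E^{n-2}\}$. These are precisely the domains of $\underline{e_{k+}^\perp}$ and $\underline{e_{k-}^\perp}$.

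Next, Lemma \ref{Lem : +} yields $\hook(\underline{e_{k+}^\perp}(\gamma))=(a,1^{j-k})$ and Lemma \ref{Lem : -} yields $\hook(\underline{e_{k-}^\perp}(\gamma))=(a-1,1^{j-k+1})$. Summing the Pieri identity over $\gamma\in T_n$ and using the extension $s_{\hook(\emptyset)}=0$ delivers the first equation of the proposition. The exceptional path $\gamma=E^{n-2}$ (with $a=1$) contributes only the single Pieri term $s_{(1^{n-1-k})}=s_{\hook(\underline{e_{k+}^\perp}(E^{n-2}))}$, and its exclusion from the domain of $\underline{e_{k-}^\perp}$ correctly annihilates the would-be spurious second term.

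For the Schur-positivity assertion I would substitute $e_n=s_{1^n}$ into the first equation and use that $\underline{e_{k+}^\perp}$ is a bijection onto $T_{n,k}^+$ while $\underline{e_{k-}^\perp}$ is a bijection onto $V_{n,k}$, to rewrite
\[e_k^\perp\bigl(\langle\mathcal{E}_{n,n},s_{1^n}\rangle|_{\hooks}\bigr)=\sum_{(\tau,\beta)\in T_{n,k}^+}s_{\hook(\beta)}+\sum_{(\tau,\beta)\in V_{n,k}}s_{\hook(\beta)}.\]
Since $\dd(\tau')=k$ for every $\tau\in\SYT(k+1,1^{n-k-1})$ and, by the paragraph following the definition of $V_{n,k}$, the sets $T_{n,k}^+$ and $T_{n,k}^-$ partition $\bigsqcup_\tau T_{n,\dd(\tau')}$, subtracting and using $V_{n,k}\subset T_{n,k}^-$ gives
\[\sum_{\tau}\sum_{\gamma\in T_{n,\dd(\tau')}}s_{\hook(\gamma)}-e_k^\perp\bigl(\langle\mathcal{E}_{n,n},s_{1^n}\rangle|_{\hooks}\bigr)=\sum_{(\tau,\beta)\in T_{n,k}^-\setminus V_{n,k}}s_{\hook(\beta)},\]
which is a non-negative sum of Schur functions.

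The delicate point throughout is the tableau bookkeeping: $T_{n,k}^\pm$ and $V_{n,k}$ are disjoint unions indexed by $\SYT(k+1,1^{n-k-1})$, so a single underlying path can carry several tableau-labels and the hook statistic depends on $\m(\tau')$. One must verify that the implicit tableau produced by $\underline{e_{k+}^\perp}(\gamma)$ (from the positions of the first $k$ east steps of $\gamma$) is exactly the $\tau$ whose hook statistic matches the Pieri output $(a,1^{j-k})$, and likewise for $\underline{e_{k-}^\perp}$. This compatibility is precisely what Lemmas \ref{Lem : +} and \ref{Lem : -} already establish, so once they are invoked the proposition reduces to set-theoretic bookkeeping.
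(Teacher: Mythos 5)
Your proposal is correct and follows essentially the same route as the paper's proof: apply the hook Pieri rule termwise to the expansion of Proposition \ref{Prop : alternant rn}, identify the two resulting hooks with $\hook(\underline{e_{k+}^\perp}(\gamma))$ and $\hook(\underline{e_{k-}^\perp}(\gamma))$ via Lemmas \ref{Lem : +} and \ref{Lem : -}, and then use the decomposition $T_{n,k}^+\cup T_{n,k}^-=\bigcup_{\tau}T_{n,\dd(\tau')}$ together with the injectivity of the maps (onto $T_{n,k}^+$ and $V_{n,k}\subset T_{n,k}^-$) to see the difference is $\sum_{\gamma\in T_{n,k}^-\setminus V_{n,k}}s_{\hook(\gamma)}$, hence Schur positive. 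Your handling of the edge cases (the path $E^{n-2}$, the vanishing conventions, and the tableau-labelled bookkeeping) is in fact slightly more explicit than the paper's own write-up.
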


\begin{proof}
By Lemma \ref{Lem : +} and Lemma \ref{Lem : -} we have $e_k^\perp s_{\hook(\gamma)}=s_{\hook(\underline{e_{k+}^\perp}(\gamma))}+s_{\hook(\underline{e_{k-}^\perp}(\gamma))}$. Furthermore, we have the disjoint union $T_{n,k}^-\cup T_{n,k}^+=\bigcup_{\tau\in \SYT((k+1,1^{n-k-1}))} T_{n,k}$ so:
\begin{equation*}\sum_{\tau\in \SYT((k+1,1^{n-k-1}))} \sum_{\gamma \in T_{n, \dd(\tau')}} s_{\hook(\gamma)}=\sum_{\gamma\in T_{n,k}^+}  s_{\hook(\gamma)}+\sum_{\gamma \in T_{n,k}^-}  s_{\hook(\gamma)}
\end{equation*}
 The map $\underline{e_{k+}^\perp}$ (respectively $\underline{e_{k-}^\perp}$) is an injections from $T_n$ into $T_{n,k}^+$ (respectively $T_{n,k}^-$) ergo the result holds. 
\end{proof}

The last proposition gives reason to believe the main theorem holds for all $\mu$, a hook, since the missing terms should be obtained by the restriction to shapes having two columns. If Theorem \ref{Thm : main} is true for all hook shapes $\mu$ then the difference would be given by the equation found in the next lemma.

First we will define $W_{n,k}=T_{n,k}^-\backslash V_{n,k}$:
\begin{align} W_{n,k}=&\label{Eq : 1e union}\underset{1\not\in\DD(\tau')}{\bigcup_{\tau\in \SYT(k+1,1^{n-k-1})}} \left\{\gamma=E^{r}\tilde\gamma\in T_{n,\dd(\tau')} ~|~1<r+1<\min(\DD(\tau'))<n-k\right\}
\\		&\label{Eq : 2e union}\cup\underset{1\not\in \DD(\tau'), \{n-k+1,\ldots,n-1\}\not\subset\DD(\tau')}{\bigcup_{\tau\in \SYT(k+1,1^{n-k-1})}} \left\{\gamma=E^{r}\tilde\gamma\in T_{n,\dd(\tau')} ~|~  r+1\geq\min(\DD(\tau'))\right\}
\\				&\label{Eq : 3e union}\cup\underset{1\in\DD(\tau')}{\bigcup_{\tau\in \SYT(k+1,1^{n-k-1})}} \left\{\gamma=N^{j}E\tilde\gamma\in T_{n,\dd(\tau')} ~|~0\leq j <n-k-\min(\DD(\tau')\backslash\{1\})\right\}
\\				&\label{Eq : 4e union}\cup\underset{1\not\in\DD(\tau')}{\bigcup_{\tau\in \SYT(k+1,1^{n-k-1})}} \left\{\gamma=N^{j}E\tilde\gamma\in T_{n,\dd(\tau')} ~|~0<j<n-k-\min(\DD(\tau'))\}\right\}
\end{align}

One can easily check that these sets are complementary. Note that Proposition \ref{Prop : perp sur chemin} is also a proof of Theorem \ref{Thm : main} for the case $(n-1,n)$ since $W_{n,n-2}=\emptyset$.
\begin{lem}
Let $A=\sum_{\tau\in \SYT(k+1,1^{n-k-1})} \sum_{\gamma \in T_{n, \dd(\tau')}} s_{\hook(\gamma)}-e_k^\perp\left(\langle\mathcal{E}_{n,n}, e_n\rangle|_{\hooks}\right)$ then for $k\geq 2$ we have:
\begin{align}\label{Eq : somme A1}A=&\underset{\min(\DD(\tau'))<n-k}{\underset{1\in\DD(\tau)}{\sum_{\tau\in \SYT(k+1,1^{n-k-1})}}}\sum_{r=1}^{\min(\DD(\tau'))-2}\sum_{\gamma\in T_{n-r,k+1}} s_{\shape1(\gamma)}+\sum_{j=1}^{n-k-1-\min(\DD(\tau'))}\sum_{\gamma\in T_{n-1,j+k}} s_{\shape2(\gamma)}
\\								&\label{Eq : somme A2}+\underset{\{n-k+1,\ldots,n-1\}\not\subseteq \DD(\tau')}{\underset{1\in\DD(\tau)}{\sum_{\tau\in \SYT(k+1,1^{n-k-1})}}}\sum_{r=\min(\DD(\tau'))-1}^{n-k-2}\sum_{\gamma\in T_{n-r,k+1}} s_{\shape1(\gamma)}
\\								&\label{Eq : somme A3}+\underset{1\not\in\DD(\tau)}{\sum_{\tau\in \SYT(k+1,1^{n-k-1})}}\sum_{j=0}^{n-k-1-\min(\DD(\tau')\backslash\{1\})}\sum_{\gamma\in T_{n-1,k+j}} s_{\shape2(\gamma)}
\end{align}

Where $\shape1(\gamma)$ is the partition $\left(\A(\gamma)+\h(\gamma)+1-\m(\tau')+kr,1^{n-2-\h(\gamma)}\right)$, and
$\shape2(\gamma)$ is the partition $\left(\A(\gamma)+\h(\gamma)+1-\m(\tau')+(j+k),1^{n-2-\h(\gamma)}\right)$.
 \\
In particular, for $k=1$  we have:
 \begin{align}\label{Eq : difference} A=& \sum_{m=2}^{n-2}\sum_{r=1}^{m-2}\sum_{\gamma\in T_{n-r,2}} s_{\shape1'(\gamma)}+ \sum_{j=1}^{n-2-m}\sum_{\gamma\in T_{n-1,j+1}} s_{\shape2'(\gamma)}
 \end{align}
Where $\shape1'(\gamma)$ is the partition $\left(\A(\gamma)+\h(\gamma)+1-m+r,1^{n-2-\h(\gamma)}\right)$, and
 $\shape2'(\gamma)$ is the partition $\left(\A(\gamma)+\h(\gamma)+2-m+j,1^{n-2-\h(\gamma)}\right)$.
\end{lem}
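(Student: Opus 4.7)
My plan is to identify $A$ as a single sum over $W_{n,k}$ and then resolve the right-hand side of the lemma via two path-surgery bijections.

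From Proposition \ref{Prop : perp sur chemin}, together with the fact that the bijections $\underline{e_{k+}^\perp}$ and $\underline{e_{k-}^\perp}$ of Lemmas \ref{Lem : +} and \ref{Lem : -} land in $T_{n,k}^+$ and $V_{n,k}\subseteq T_{n,k}^-$ respectively, one obtains
\[
e_k^\perp\bigl(\langle\mathcal{E}_{n,n},e_n\rangle|_{\hooks}\bigr)=\sum_{\gamma\in T_{n,k}^+}s_{\hook(\gamma)}+\sum_{\gamma\in V_{n,k}}s_{\hook(\gamma)}.
\]
Subtracting this from $\sum_{\tau}\sum_{\gamma\in T_{n,\dd(\tau')}}s_{\hook(\gamma)}=\sum_{\gamma\in T_{n,k}^+\sqcup T_{n,k}^-}s_{\hook(\gamma)}$ yields the compact identity $A=\sum_{\gamma\in W_{n,k}}s_{\hook(\gamma)}$, which I then split along the disjoint decomposition of $W_{n,k}$ into the four pieces \eqref{Eq : 1e union}--\eqref{Eq : 4e union}.

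Next I set up two bijections that rewrite the hooks. For paths $\gamma=E^rN\gamma_0$ (pieces \eqref{Eq : 1e union} and \eqref{Eq : 2e union}, with $r$ maximal so that $\tilde\gamma$ begins with $N$), the translation $(-r,0)$ carries $\gamma_0$ to a path $\gamma'\in T_{n-r,k+1}$; since the $E^r$ prefix contributes exactly $rk$ south-east boxes (the rectangle $[0,r)\times[0,k)$ of the $T_{n,k}$-staircase) and the endpoint height is unchanged, one gets $\hook(\gamma)=\shape1(\gamma')$. For paths $\gamma=N^jE\tilde\gamma$ (pieces \eqref{Eq : 3e union} and \eqref{Eq : 4e union}), translating $\tilde\gamma$ by $(-1,0)$ yields a path $\gamma'\in T_{n-1,k+j}$; the column $x=0$ of the ambient staircase lies outside the shifted sub-staircase, and exactly its $k+j$ boxes at heights $0,\dots,k+j-1$ are south-east of $\gamma$, giving $\hook(\gamma)=\shape2(\gamma')$. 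Matching the four pieces to the three sums via the complementarity $1\not\in\DD(\tau')\Longleftrightarrow 1\in\DD(\tau)$ produces \eqref{Eq : somme A1}--\eqref{Eq : somme A3}: pieces \eqref{Eq : 1e union} and \eqref{Eq : 4e union} together form \eqref{Eq : somme A1}, piece \eqref{Eq : 2e union} yields \eqref{Eq : somme A2}, and piece \eqref{Eq : 3e union} gives \eqref{Eq : somme A3}.

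For $k=1$, piece \eqref{Eq : 2e union} is empty because $\{n-k+1,\dots,n-1\}=\emptyset$ is vacuously contained in every $\DD(\tau')$, and piece \eqref{Eq : 3e union} is empty because the only tableau with $1\in\DD(\tau')$ has $\DD(\tau')=\{1\}$, which forces all paths in $T_{n,1}$ into $V_{n,1}$; reindexing the surviving pieces by $m=\m(\tau')$ (which uniquely determines a standard tableau of shape $(n-1,1)$) then collapses the formula to \eqref{Eq : difference}. The main obstacle is the careful accounting of area under the two bijections. Both identities $\A(\gamma)=rk+\A(\gamma')$ and $\A(\gamma)=(k+j)+\A(\gamma')$ rest on the same template: the complement strip ($\{x\geq r\}$ or $\{x\geq 1\}$) of the ambient staircase is isometric to the full staircase of $T_{n-r,k+1}$ (respectively $T_{n-1,k+j}$) as a set equipped with the south-east relation, while the prefix strip contributes the predicted number of boxes. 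The verification requires checking at each boundary column that the concave/convex turns of the path sort boxes into the correct NW/SE side; in particular, none of the boxes in $\{x<r\}\cap\{y\geq k\}$ may be SE of $\gamma$ in the $E^r$-case, and dually in the $N^jE$-case the single column $x=0$ must contribute exactly $k+j$ boxes.
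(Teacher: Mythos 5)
Your proposal is correct and follows essentially the same route as the paper: you reduce $A$ to $\sum_{\gamma\in W_{n,k}}s_{\hook(\gamma)}$ via Proposition \ref{Prop : perp sur chemin} and Lemmas \ref{Lem : +}--\ref{Lem : -}, then match the four pieces of $W_{n,k}$ to the three sums through the same two path surgeries ($E^rN\tilde\gamma\mapsto\tilde\gamma\in T_{n-r,k+1}$ with area shift $rk$, and $N^jE\tilde\gamma\mapsto\tilde\gamma\in T_{n-1,k+j}$ with area shift $k+j$), and dispose of the $k=1$ case by the same emptiness observations. Your only additions are a more explicit verification of the area bookkeeping (which the paper delegates to Figures \ref{Fig : commence pas nord} and \ref{Fig : commence pas est}) and you leave implicit the boundary case $\gamma=E^{n-2-k}$ handled in the paper by the convention $T_{k+2,k+1}=T_{k+2,k}$, neither of which changes the argument.
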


\begin{figure}[h!]
\begin{minipage}{8.5cm}
\centering
\begin{tikzpicture}[scale=.5]
\filldraw[gray!50] (0,-2)--(0,1)--(1,1)--(1,-2)--(0,-2);
\filldraw[pink] (1,1)--(1,-2)--(8,-2)--(8,-1)--(7,-1)--(7,0)--(6,0)--(6,1)--(5,1)--(5,2)--(2,2)--(2,1)--(1,1);
\draw[thin, gray] (0,-2)--(0,6)--(1,6)--(1,5)--(2,5)--(2,4)--(3,4)--(3,3)--(4,3)--(4,2)--(5,2)--(5,1)--(6,1)--(6,0)--(7,0)--(7,-1)--(8,-1)--(8,-2)--(0,-2);
\draw[thin, gray] (1,-2)--(1,5);
\draw[thin, gray] (2,-2)--(2,4);
\draw[thin, gray] (3,-2)--(3,3);
\draw[thin, gray] (4,-2)--(4,2);
\draw[thin, gray] (5,-2)--(5,1);
\draw[thin, gray] (6,-2)--(6,0);
\draw[thin, gray] (7,-2)--(7,-1);
\draw[thin, gray] (0,-1)--(7,-1);
\draw[thin, gray] (0,0)--(6,0);
\draw[thin, gray] (0,1)--(5,1);
\draw[thin, gray] (0,2)--(4,2);
\draw[thin, gray] (0,3)--(3,3);
\draw[thin, gray] (0,4)--(2,4);
\draw[thin, gray] (0,5)--(1,5);
\draw[thick, blue] (0,-1)--(0,1)--(1,1);
\draw[thick] (1,1)--(2,1)--(2,2)--(4,2);
\node(fg1) at (0,6){\textcolor{gray}{$\bullet$}};
\node(fg2) at (6,0){\textcolor{gray}{$\bullet$}};
\node(fg3) at (7,-1){\textcolor{gray}{$\bullet$}};
\draw[thick, red] (1,-2)--(1,5)--(2,5)--(2,4)--(3,4)--(3,3)--(4,3)--(4,2)--(5,2)--(5,1)--(6,1)--(6,0)--(7,0)--(7,-1)--(8,-1)--(8,-2)--(1,-2);
\node(dr) at (0,-1){\textcolor{gray}{$\bullet$}};
\node(db) at (1,1){\textcolor{red}{$\bullet$}};
\node(f1) at (2,4){\textcolor{red}{$\bullet$}};
\node(f2) at (3,3){\textcolor{red}{$\bullet$}};
\node(f3) at (4,2){\textcolor{red}{$\bullet$}};
\node(f4) at (5,1){\textcolor{red}{$\bullet$}};
\node(f5) at (1,5){\textcolor{red}{$\bullet$}};
\node(j) at (-1,0){$j$};
\draw[<->] (-.5,-1)--(-.5,1);
\node(k) at (-1,-1.5){$k$};
\draw[<->] (-.5,-2)--(-.5,-1);
\draw[<->] (0,-2.5)--(8,-2.5);
\node(n) at (4,-3){$n-2$};
\node(T) at (7,2){\textcolor{red}{$T_{n-1,k+j}$}};
\node(Tb) at (-2,2){\textcolor{gray}{$T_{n,k}$}};
\end{tikzpicture}
\caption{Comparing the path $NNEENEE=N^2E\tilde\gamma \in T_{10,1}$ and the path $\tilde\gamma\in T_{9,3}$. Same height $\h(N^2E\tilde\gamma)=\h(\tilde\gamma)$ though $\A(N^2E\tilde\gamma)=\A(\tilde\gamma)+3$.}
\label{Fig : commence pas nord}
\end{minipage}
\begin{minipage}{8.5cm}
\begin{tikzpicture}[scale=.5]
\filldraw[gray!50] (0,-2)--(0,-1)--(3,-1)--(3,-2)--(0,-2);
\filldraw[pink] (3,0)--(3,-2)--(8,-2)--(8,-1)--(7,-1)--(7,0)--(6,0)--(6,1)--(5,1)--(5,0)--(3,0);
\draw[thin, gray] (0,-2)--(0,6)--(1,6)--(1,5)--(2,5)--(2,4)--(3,4)--(3,3)--(4,3)--(4,2)--(5,2)--(5,1)--(6,1)--(6,0)--(7,0)--(7,-1)--(8,-1)--(8,-2)--(0,-2);
\draw[thin, gray] (1,-2)--(1,5);
\draw[thin, gray] (2,-2)--(2,4);
\draw[thin, gray] (3,-2)--(3,3);
\draw[thin, gray] (4,-2)--(4,2);
\draw[thin, gray] (5,-2)--(5,1);
\draw[thin, gray] (6,-2)--(6,0);
\draw[thin, gray] (7,-2)--(7,-1);
\draw[thin, gray] (0,-1)--(7,-1);
\draw[thin, gray] (0,0)--(6,0);
\draw[thin, gray] (0,1)--(5,1);
\draw[thin, gray] (0,2)--(4,2);
\draw[thin, gray] (0,3)--(3,3);
\draw[thin, gray] (0,4)--(2,4);
\draw[thin, gray] (0,5)--(1,5);
\draw[thick] (3,0)--(5,0)--(5,1);
\node(fg1) at (0,6){\textcolor{gray}{$\bullet$}};
\node(fg2) at (6,0){\textcolor{gray}{$\bullet$}};
\node(fg3) at (7,-1){\textcolor{gray}{$\bullet$}};
\draw[thick, red] (3,-2)--(3,3)--(4,3)--(4,2)--(5,2)--(5,1)--(6,1)--(6,0)--(7,0)--(7,-1)--(8,-1)--(8,-2)--(3,-2);
\draw[thick, blue] (0,-1)--(3,-1)--(3,0);
\node(dr) at (0,-1){\textcolor{gray}{$\bullet$}};
\node(db) at (3,0){\textcolor{red}{$\bullet$}};
\node(f2) at (3,3){\textcolor{red}{$\bullet$}};
\node(f3) at (4,2){\textcolor{red}{$\bullet$}};
\node(f4) at (5,1){\textcolor{red}{$\bullet$}};
\node(f1) at (2,4){\textcolor{gray}{$\bullet$}};
\node(f5) at (1,5){\textcolor{gray}{$\bullet$}};
\node(r) at (1.5,-.3){$r$};
\draw[<->] (0,-.7)--(3,-.7);
\node(k) at (-1,-1.5){$k$};
\draw[<->] (-.5,-2)--(-.5,-1);
\draw[<->] (0,-2.5)--(8,-2.5);
\node(n) at (4,-3){$n-2$};
\node(T) at (7,2){\textcolor{red}{$T_{n-r,k+1}$}};
\node(Tb) at (-2,2){\textcolor{gray}{$T_{n,k}$}};
\end{tikzpicture}
\caption{Comparing the path $EEENEEN=E^3N\tilde\gamma \in T_{10,1}$ and the path $\tilde\gamma\in T_{7,2}$. Same height $\h(E^3N\tilde\gamma)=\h(\tilde\gamma)$ though $\A(E^3N\tilde\gamma)=\A(\tilde\gamma)+3$.}
\label{Fig : commence pas est}
\end{minipage}
\end{figure}

\begin{proof}We know that $T_n^k=T_{n,k}^+\cup T_{n,k}^-$, so by Proposition \ref{Prop : perp sur chemin}, Lemma \ref{Lem : +} and Lemma \ref{Lem : -} we have:
\begin{align*}
A&=\sum_{\gamma\in T_{n,k}^+}s_{\hook(\gamma)}+\sum_{\gamma\in T_{n,k}^-}s_{\hook(\gamma)}-\sum_{\gamma\in T_{n,k}^+}s_{\hook(\gamma)}-\sum_{\gamma\in V_{n,k}}s_{\hook(\gamma)},
\\	&=\sum_{\gamma\in T_{n,k}^-}s_{\hook(\gamma)}-\sum_{\gamma\in V_{n,k}}s_{\hook(\gamma)},
\\	&=\sum_{\gamma\in W_{n,k}}s_{\hook(\gamma)}.
\end{align*}
The last equality is a consequence of $W_{n,k}=T_{n,k}^-\backslash V_{n,k}$. Up to a slight change in the area statistic, the paths $\gamma\in T_{n,k}$ such that $\gamma=N^jE\tilde\gamma$ are the same as the paths $\tilde\gamma \in T_{n-1,j+k}$ (see Figure \ref{Fig : commence pas nord}). To obtain the same hook shape we only need to add $j+k$ to the area. Hence, the Sum of Line \eqref{Eq : somme A3} corresponds to the set in Line \eqref{Eq : 3e union} and the second sum of Line \eqref{Eq : somme A1} to the set in Line \eqref{Eq : 4e union}.

Similarly, the paths $\gamma\in T_{n,k}$ such that $\gamma=E^rN\tilde\gamma$ are the same as the paths $\tilde\gamma \in T_{n-r,k+1}$ (see Figure \ref{Fig : commence pas est}). To obtain the same hook shape we only need to add $rk$ to the area. Consequently the first Sum of Line \eqref{Eq : somme A1} corresponds to the set in Line \eqref{Eq : 1e union} and the sum of Line \eqref{Eq : somme A2} to the set in line \eqref{Eq : 2e union}. Note that for Line \eqref{Eq : somme A1} and  Line \eqref{Eq : somme A2} the case $r=n-k-2$ corresponds to the $T_{k+2,k+1}$ which correspond to the paths $E^{n-2-k}$ in $T_{n,k}$ that as area $k(n-k-2)$ greater than $\A(\epsilon)$, where $\epsilon$ is the only path of  $T_{k+2,k+1}$. This works with the convention $T_{k+2,k+1}=T_{k+2,k}$.

For the restriction to $k=1$ one only needs to notice that for all tableaux $\tau$ in $\SYT(2,1^{n-2})$ the descent set contains only one element. Therefore, two of the sums are empty and the result follows.
\end{proof}

%%%%%%%%%%%%%%%%%%%%%%%%%%%%%%%%%%%
%%%%%%%%%	Bijections and starting the second column
%%%%%%%%%%%%%%%%%%%%%%%%%%%%%%%%%%%

\section{Bijections and starting the second column}\label{Sec : 2e colonne}
If we dismiss the first part we can see hook shaped partitions as partitions with one column. We don't have a formula for the restriction to partitions that have two columns, but we can start the second column. Before we prove our formula for the restriction to shapes $\{(a,2,1^k)~|~ k\in\mathbb{N}, a\in \mathbb{N}_{\geq 2}\}$ we need  preliminary result.
\\

Let $T_{n,0,h}^E$ be the subset of paths of $T_{n,0}$ that start with an east step and have height $h$. Let $\SYT(k+1,1^{n-k-1})_{S}$ be the set of tableaux of shape $(k+1,1^{n-k-1})$ for which the descent set contains the set $S$. For a path $\gamma$ let $n_i$ be the number of east steps before the $i$-th north step. 

Define $\Phi_k: T_{n,0,n-k-3}^E \rightarrow  \SYT(k+1,1^{n-k-1})_{\{1,2\}}$ by $\Phi_k(\gamma)$ is the unique hook shape tableau having $\{(n-i-n_i+1)| 1\leq i \leq \h(\gamma)\}\cup\{1,2\}$ as a descent set (see Figure \ref{Fig : Phi}). 

\begin{figure}[h!]
\begin{minipage}{8.5cm}
\centering
\begin{tikzpicture}[scale=.5]
\draw[gray, thin] (0,0)--(0,5)--(1,5)--(1,4)--(2,4)--(2,3)--(3,3)--(3,2)--(4,2)--(4,1)--(5,1)--(5,0)--(0,0);
\draw[gray, thin] (0,1)--(4,1);
\draw[gray, thin] (0,2)--(3,2);
\draw[gray, thin] (0,3)--(2,3);
\draw[gray, thin] (0,4)--(1,4);
\draw[gray, thin] (1,0)--(1,4);
\draw[gray, thin] (2,0)--(2,3);
\draw[gray, thin] (3,0)--(3,2);
\draw[gray, thin] (4,0)--(4,1);
\draw[red,thick] (0,0)--(1,0)--(1,2)--(2,2)--(2,3);
\node(d) at (0,0){\textcolor{red}{$\bullet$}};
\node(f) at (2,3){\textcolor{red}{$\bullet$}};
\node(3) at (6,2.7){$3$};
\node(5) at (6,1.65){$5$};
\node(6) at (6,0.5){$6$};
\node(m) at (8,2){$\mapsto$};
\draw (10,0)--(10,6)--(11,6)--(11,1)--(12,1)--(12,0)--(10,0);
\draw (10,1)--(11,1);
\draw (10,2)--(11,2);
\draw (10,3)--(11,3);
\draw (10,4)--(11,4);
\draw (10,5)--(11,5);
\draw (11,0)--(11,1);
\node(1) at (10.5,.5){\textcolor{blue}{$1$}};
\node(2) at (10.5,1.5){\textcolor{blue}{$2$}};
\node(3) at (10.5,2.5){\textcolor{red}{$3$}};
\node(4) at (10.5,3.5){$4$};
\node(5) at (11.5,.5){\textcolor{red}{$5$}};
\node(6) at (10.5,4.5){\textcolor{red}{$6$}};
\node(7) at (10.5,5.5){$7$};
\node(des) at (6,-1){$\DD(\Phi_1(ENNEN))=\{1,2,3,5,6\}$};
\end{tikzpicture}
\caption{An example of the map $\Phi_1$ when $n=7$ and $k=1$. With $ENNEN\in T_{7,0,3}^E$.}
\label{Fig : Phi}
\end{minipage}
\begin{minipage}{8.5cm}
\centering
\begin{tikzpicture}[scale=.5]
\draw[gray, thin] (0,0)--(0,5)--(1,5)--(1,4)--(2,4)--(2,3)--(3,3)--(3,2)--(4,2)--(4,1)--(5,1)--(5,0)--(0,0);
\draw[gray, thin] (0,1)--(4,1);
\draw[gray, thin] (0,2)--(3,2);
\draw[gray, thin] (0,3)--(2,3);
\draw[gray, thin] (0,4)--(1,4);
\draw[gray, thin] (1,0)--(1,4);
\draw[gray, thin] (2,0)--(2,3);
\draw[gray, thin] (3,0)--(3,2);
\draw[gray, thin] (4,0)--(4,1);
\draw[red,thick] (0,0)--(0,2)--(2,2)--(2,3);
\node(d) at (0,0){\textcolor{red}{$\bullet$}};
\node(f) at (2,3){\textcolor{red}{$\bullet$}};
\node(2) at (6,2.7){$2$};
\node(5) at (6,1.65){$5$};
\node(6) at (6,0.5){$6$};
\node(m) at (8,2){$\mapsto$};
\draw (10,0)--(10,6)--(11,6)--(11,1)--(12,1)--(12,0)--(10,0);
\draw (10,1)--(11,1);
\draw (10,2)--(11,2);
\draw (10,3)--(11,3);
\draw (10,4)--(11,4);
\draw (10,5)--(11,5);
\draw (11,0)--(11,1);
\node(1) at (10.5,.5){\textcolor{blue}{$1$}};
\node(2) at (10.5,1.5){\textcolor{red}{$2$}};
\node(3) at (10.5,2.5){\textcolor{blue}{$3$}};
\node(4) at (10.5,3.5){$4$};
\node(5) at (11.5,.5){\textcolor{red}{$5$}};
\node(6) at (10.5,4.5){\textcolor{red}{$6$}};
\node(7) at (10.5,5.5){$7$};
\node(des) at (6,-1){$\DD(\Omega_1^1(NNEEN))=\{1,2,3,5,6\}$};
\end{tikzpicture}
\caption{An example of the map $\Omega_1^1$ when $n=7$, $k=1$ and $j=1$. With $NNEEN \in T_{7,0,3}^{(1)}$.}
\label{Fig : Omega_j^k}
\end{minipage}
\end{figure}

\begin{lem}\label{Lem : Phi}The map $\Phi_k$ is a well defined bijective maps. Additionally, for $\gamma\in T_{n,0}^E$ we have:
\begin{equation*} \A(\gamma)+\h(\gamma)+1=\m(\Phi_k(\gamma))-\dd(\Phi_k(\gamma))
\end{equation*}
In particular the image of $\Phi_k$ is the set of tableaux of shape $(k+1,1^{n-k-1})$ such that the descent set contains the set $\{1,2\}$.
\end{lem}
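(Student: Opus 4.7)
The plan is to establish three things: that $\Phi_k$ is well-defined, that it is bijective onto the claimed image, and that the statistic identity holds.

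First, I recall that a hook SYT of shape $(k+1,1^{n-k-1})$ is uniquely determined by its first-column content (its bottom box is always $1$), equivalently by its descent set, which has exactly $n-k-1$ elements and is obtained from the first-column entries by subtracting $1$. Given $\gamma\in T_{n,0,n-k-3}^{E}$, the non-decreasing sequence $n_1,\ldots,n_{n-k-3}$ satisfies $n_1\geq 1$ (because $\gamma$ begins with an east step) and $n_{n-k-3}\leq k+1$ (the total number of east steps). Since $i+n_i$ is strictly increasing in $i$, the $n-k-3$ integers $n-i-n_i+1$ are pairwise distinct and, using the bounds above, lie in $\{3,\ldots,n-1\}$. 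Their union with $\{1,2\}$ is therefore a size-$(n-k-1)$ subset of $\{1,\ldots,n-1\}$, which is a legitimate descent set of a unique hook SYT of the given shape, and that descent set automatically contains $\{1,2\}$. This settles well-definedness and shows that the image of $\Phi_k$ lies in $\SYT(k+1,1^{n-k-1})_{\{1,2\}}$.

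For bijectivity, I would construct an explicit inverse. Given $\tau$ with $\{1,2\}\subseteq \DD(\tau)$, write $\DD(\tau)\setminus\{1,2\}=\{d_1<d_2<\cdots<d_{n-k-3}\}$, all lying in $\{3,\ldots,n-1\}$. Since $i\mapsto n-i-n_i+1$ is strictly decreasing, the only candidate for the $n_i$'s is $n_i=n-i+1-d_{n-k-2-i}$. Elementary bounds $d_j\geq j+2$ and $d_j\leq k+2+j$, obtained because the $d_j$'s form an ordered subset of size $n-k-3$ inside $\{3,\ldots,n-1\}$, translate into $1\leq n_i\leq k+1$, while strict monotonicity of $(d_j)$ ensures weak monotonicity of $(n_i)$. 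So the resulting sequence defines a unique path in $T_{n,0,n-k-3}^{E}$, which is the inverse image.

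For the statistic identity, I would first establish the area formula
\begin{equation*}
\A(\gamma)=h(n-2)-\binom{h}{2}-\sum_{i=1}^{h}n_i
\end{equation*}
valid for any $\gamma\in T_{n,0}$ of height $h$. This follows by a columnwise count: the columns $c\in\{0,\ldots,n-3-h\}$ contribute $y_{c+1}$ south-east boxes, where $y_k$ denotes the height of the $k$-th east step, while the remaining columns $c\geq n-2-h$ together contribute exactly $\binom{h+1}{2}$ (they lie entirely to the right of the path's endpoint). The identity $\sum_k y_k+\sum_j n_j=(n-2-h)h$, which expresses that both sums record the same bounding-rectangle area, then eliminates the $y$'s in favour of the $n_i$'s. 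Substituting $h=n-k-3$ and comparing with $\m(\Phi_k(\gamma))=3+\sum_{i=1}^{n-k-3}(n-i-n_i+1)$ together with $\dd(\Phi_k(\gamma))=n-k-1$, a direct algebraic simplification yields the claimed equality $\A(\gamma)+\h(\gamma)+1=\m(\Phi_k(\gamma))-\dd(\Phi_k(\gamma))$.

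The main obstacle is the statistic identity: proving the area formula cleanly requires a careful case analysis distinguishing columns the path traverses from columns entirely to the right of its endpoint, and reconciling the descending indexing of $(n-i-n_i+1)_i$ with the ascending indexing of the $d_j$'s demands some attention. Steps establishing well-definedness and bijectivity are then essentially combinatorial bookkeeping once the hook SYT/descent set dictionary is made explicit.
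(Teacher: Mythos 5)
Your proposal is correct and follows essentially the same route as the paper: well-definedness via the bounds $1\le n_i\le k+1$ and distinctness of the $n-i-n_i+1$, bijectivity via the explicit inverse determined by the descent set, and the statistic identity by expressing $\A(\gamma)$ in terms of the $n_i$'s (your formula $h(n-2)-\binom{h}{2}-\sum_i n_i$ is equivalent to the paper's $\sum_i(n-i-1-n_i)$, which the paper simply asserts while you derive it by a columnwise count). No gaps.
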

\begin{proof} For all hook shaped tableaux the descent set is given by subtracting one to each entry that does not lie in the first row. Because the remainder of the entries are in the first row the descent set uniquely determines a hook shaped tableau. Moreover, a path, $\gamma$ of $T_{n,0,n-k-3}^E$ starts with one East step and has exactly $k+1$ east steps. Ergo for all $i$ such that $1\leq i\leq n-k-3$ we have $1\leq n_i\leq k+1$. This yields $n-i+1 > n-i-n_i+1  \geq 3$. For all path $\gamma$, the set $\{n_1,\ldots, n_{n-k-3}\}$ is an increasing sequence of positive integers therefore the $n-k-3$ elements of the descent set created are all distinct values of $\{1,\ldots,n-1\}$. Hence, $\Phi_k$ associates $\gamma$ to a  unique tableau of shape $(k+1,1^{n-k-1})$ consequently the maps, $\Phi_k$ is well defined. 

Let $\gamma, \pi\in T_{n,0}^E$ be such that $\Phi_k(\gamma)=\Phi_k(\pi)$. By the previous paragraph the height of the path determines the shape of the tableau thus $\gamma$ and $\pi$ are of same height. Let:
\begin{equation}\label{Eq : descent set} \{1 <2 < d_1 < d_2 < \cdots < d_{n-k-3}\}=\DD(\Phi_k(\gamma))=\DD(\Phi_k(\pi)).
\end{equation}
By construction we must have $d_{n-k-i-2}=n-i-n_i+1$ and therefore we know the number of east step before each north step which uniquely determines a path. Ergo $\gamma=\pi$. 

Now we show that the map is surjective. Let $\tau\in  \SYT(k+1,1^{n-k-1})_{\{1,2\}}$ with $ \DD(\tau)=\{1 <2 < d_1 < d_2 < \cdots < d_{n-k-3}\}$. Then the path:
\begin{equation*}\label{Eq : chemin}\gamma=E^{n-d_{n-k-3}}NE^{d_{n-k-3}-d_{n-k-4}-1}N\cdots E^{d_{2}-d_{1}-1}NE^{d_1-3},
\end{equation*}
is in $T_{n,0,n-k-3}^E$ since $d_{n-k-3}\leq n-1$ and the path has $n-k-3$ North steps and $n-2$ steps. Moreover, $\Phi_k(\gamma)=\tau$.

Finally the area of a path $\gamma$ is equal to $\sum_{i=1}^{n-k-3}n-i-1-n_i$, $\m(\Phi_k(\gamma))=3+\sum_{i=1}^{n-k-3}n-i-n_i+1$ and $\DD(\Phi_k(\gamma))=n-k-1$ which yields:
\begin{align*}\h(\gamma)+1+ \A(\gamma)&=n-k-2+\sum_{i=1}^{n-k-3}(n-i-n_i-1)
\\							&= - n+k+4+\sum_{i=1}^{n-k-3}(n-i-n_i+1)
\\							&= -( \DD(\Phi_k(\gamma))-2)+1+(\m(\Phi_k(\gamma))-3)
\\							&=\m(\Phi_k(\gamma)) -\DD(\Phi_k(\gamma))
\end{align*}
\end{proof}

Let $T_{n,0,h}^{(j)}$ be the paths of $T_{n,0}$ that start with a north step, end with exactly $j$ north steps and has height $h$.

For $0\leq k\leq n-3$, we also define $\Omega_k^j:  T_{n,0,n-k-3}^{(j)} \rightarrow \SYT(k+1,1^{n-k-1})_{\{1,\ldots, j+2 , n-1\}}$ by $\Omega_k^j(\gamma)$ is the unique hook shape tableau having $\{(n-i-n_i)| 1\leq i \leq \h(\gamma)\}\cup\{1,j+2\}$ as a descent set (see Figure \ref{Fig : Omega_j^k}). As before, in a path $\gamma$ the $n_i$'s are the number of east steps before the $i$-th north step. Note that $0\leq j\leq n-k-3$ since  the path can not have more north steps than it is height. 

\begin{lem}\label{Lem : Omega} The maps $\Omega_k^j$ are well defined bijective maps. Additionally, for $\gamma \in T_{n,0,n-k-3}^{(j)}$ we have:
\begin{equation*}\A(\gamma)+\h(\gamma)+1=\m(\Omega_k^j(\gamma))-(j+2)
\end{equation*}
Furthermore the image of $\Omega_k^j$ is the set of all tableaux, $\tau$, of shape $(k+1,1^{n-k-1})$, such that $\{1,2, \ldots, j+2, n-1\}\subseteq \DD(\tau)$.
\end{lem}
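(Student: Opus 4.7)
The plan is to mirror the structure of the proof of Lemma \ref{Lem : Phi}, establishing well-definedness, injectivity, surjectivity, and finally the statistic identity. The new feature compared to $\Phi_k$ is that the descent set of $\Omega_k^j(\gamma)$ is manufactured by \emph{adjoining} the two values $\{1,j+2\}$ to the sequence $\{n-i-n_i\}_{i=1}^h$, and the key technical point will be verifying that no accidental collisions occur and that the ``exactly $j$ trailing north steps'' condition is both forced by and produces the correct descent set.

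For well-definedness, let $\gamma \in T_{n,0,n-k-3}^{(j)}$, so $h:=\h(\gamma)=n-k-3$ and $\gamma$ has $k+1$ east steps. Since $\gamma$ begins with $N$, $n_1=0$, so $n-1$ is contributed. Since $\gamma$ ends with exactly $j$ north steps, $n_{h-j+1}=\cdots=n_h=k+1$, producing the values $j+1,j,\ldots,2$, and $n_{h-j}\leq k$, forcing $n-(h-j)-n_{h-j}\geq j+3$. The sequence $n-i-n_i$ is strictly decreasing in $i$, so these $h$ values are distinct and lie in $\{2,\ldots,n-1\}\setminus\{j+2\}$; adjoining $\{1,j+2\}$ gives an $(n-k-1)$-element subset of $\{1,\ldots,n-1\}$ containing $\{1,2,\ldots,j+2,n-1\}$, which uniquely specifies a hook standard Young tableau of shape $(k+1,1^{n-k-1})$ with the desired descent set.

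Injectivity is then immediate: $\Omega_k^j(\gamma)=\Omega_k^j(\pi)$ implies $\DD(\Omega_k^j(\gamma))=\DD(\Omega_k^j(\pi))$, and after removing $\{1,j+2\}$ and sorting decreasingly, one recovers the sequence $(n_i)$ via $n_i=n-i-d_i^*$, which uniquely determines the path. For surjectivity, given $\tau$ of the stated shape with $\{1,\ldots,j+2,n-1\}\subseteq\DD(\tau)$, set $\{d_1^*>d_2^*>\cdots>d_h^*\}:=\DD(\tau)\setminus\{1,j+2\}$ and define $n_i:=n-i-d_i^*$. One checks: $d_1^*=\max(\DD(\tau)\setminus\{1,j+2\})=n-1$, hence $n_1=0$ and the reconstructed path starts with $N$; $d_{h-j+1}^*,\ldots,d_h^*$ are $j+1,\ldots,2$, hence $n_i=k+1$ for these indices (at least $j$ trailing norths); and $d_{h-j}^*\geq j+3$ because the set $\{1,\ldots,j+2\}\subseteq\DD(\tau)$ supplies all integers up to $j+2$, hence $n_{h-j}\leq k$, producing \emph{exactly} $j$ trailing norths.

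The statistic identity follows by direct computation using the formula $\A(\gamma)=\sum_{i=1}^h(n-i-1-n_i)$ from the proof of Lemma \ref{Lem : Phi}:
\begin{equation*}
\m(\Omega_k^j(\gamma)) \;=\; 1+(j+2)+\sum_{i=1}^h(n-i-n_i) \;=\; (j+2)+\A(\gamma)+h+1 \;=\; (j+2)+\A(\gamma)+\h(\gamma)+1.
\end{equation*}
The main obstacle is the potential collision between $j+2$ and the values $n-i-n_i$; this is precisely what the ``exactly $j$'' condition on the trailing north steps rules out, since it forces the gap $d_{h-j+1}^*=j+1$ and $d_{h-j}^*\geq j+3$, leaving $j+2$ free to be adjoined. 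The same condition is what makes the inverse construction in surjectivity land in $T_{n,0,n-k-3}^{(j)}$ with the correct tail length rather than a longer one.
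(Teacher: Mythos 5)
Your proof is correct and follows essentially the same route as the paper's: the same collision argument at $j+2$ (exactly $j$ trailing north steps force the values $2,\ldots,j+1$ and push $n-(h-j)-n_{h-j}$ up to at least $j+3$), the same reconstruction of the $n_i$ from $\DD(\tau)\setminus\{1,j+2\}$ for injectivity/surjectivity, and the same $\m$ versus $\A+\h$ computation. The only cosmetic difference is that the paper writes the inverse as an explicit path word while you recover it via $n_i=n-i-d_i^*$, which is equivalent.
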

\begin{proof} As before, the descent set uniquely determines a hook shaped tableau. Furthermore, a path of height $n-k-3$ has exactly $k+1$ east steps. Hence, for all $i$, $0\leq n_i\leq k+1$.  Which yields $n-i\geq n-i-n_i \geq n-k-i-1\geq 2$ as a result of $i\leq n-k-3$. In particular the paths start with a north step ergo $n_1=0$ and $n-1\in\DD(\Omega_k^j(\gamma))$. Additionally, for all $i$ such that $\h(\gamma)-j+1 \leq i \leq \h(\gamma)$ the value of $n_i$ is $k+1$, since the paths end with exactly $j$ North steps. In consequence we have $j+1\geq n-i-n_i=n-k-1-i \geq 2$.  For all path $\gamma$, the set $\{n_1,\ldots, n_{\h(\gamma)}\}$ is an increasing sequence of positive integers thus $n-i-n_i>n-(i+1)-n_{i+1}$ and the $n-k-3$ elements of the descent set created are all distinct. We then have $j$ distinct values between $2$ and $j+1$ for this reason $\{2, \ldots, j+1, n-1\}$ is a subset of $\DD(\Omega_k^j(\gamma))$. Finally $n_{h(\gamma)-j}>k+1$, by definition of $T_{n,0,n-k-3}^{(j)}$. Therefore, $j+2$ is not one of the $n-k-3$ elements created and the map $\Omega_k^j$ associates $\gamma$ to a  unique tableau of shape $(k+1,1^{n-k-1})$. Consequently it is well defined. 

To show the maps are bijective we can use the same proof as in Lemma \ref{Lem : Phi} if we swap the descent set on Line  \eqref{Eq : descent set} for $\{1  <  d_1<\cdots <d_j<j+2< d_{j+1} < \cdots < d_{\h(\gamma)-1}<n-1\}$ and the path on Line \eqref{Eq : chemin} for $NE^{n-d_{n-k-4}-2}NE^{d_{n-k-4}-d_{n-k-5}-1}N\cdots E^{d_{j+2}-d_{j+1}-1}NE^{d_{j+1}-j-2}N^j$.

The area of a path $\gamma$ is equal to $\sum_{i=1}^{\h(\gamma)}n-i-n_i-1$, $\m(\Omega_k^j(\gamma))=3+j+\sum_{i=1}^{\h(\gamma)}n-i-n_i$ Ergo:
\begin{align*} \A(\gamma)+\h(\gamma)+1&=\h(\gamma)+1+\sum_{i=1}^{\h(\gamma)}(n-i-1-n_i)
\\							&=1+\sum_{i=1}^{\h(\gamma)}(n-i-n_i) 
\\							&=\m(\Omega_k^j(\gamma)) -(j+2)
\end{align*}
\end{proof}
 
Notice that $T_{n,0}^E \cup \bigcup_{j,k} T_{n,0,n-k-3}^{(j)}=T_{n,0}$. By lifting the formula for hook shapes in two variables we get a first formula for the alternant restricted to the shape $R=\{(a,2,1^k)~|~ k\in\mathbb{N}, a\in \mathbb{N}_{\geq 2}\}$. 
\begin{prop} 
For $\mathcal{R}=\{(a,2,1^k)~|~ k\in\mathbb{N}, a\in \mathbb{N}_{\geq 2}\}$, we have:
\begin{align}\langle \mathcal{E}_{n,n},e_n\rangle|_{\mathcal{R}}=&\label{Eq : 2 colone lift} \sum_{k=1}^{n-4}\sum_{i=2}^{n-k-2}\underset{\{1,\ldots,i, n-1\}\not\subseteq \DD(\tau)}{\underset{1\in \DD(\tau)}{\sum_{\tau\in \SYT(k+1,1^{n-k-1})}}}s_{\shape(\gamma)}
\\			=&\sum_{k=1}^{n-4}\sum_{i=2}^{n-k-2}\underset{\{1,\ldots,i\}\not\subseteq \DD(\tau)}{\underset{1\in \DD(\tau)}{\sum_{\tau\in \SYT(k+1,1^{n-k-1})}}}s_{\shape(\gamma)}+\underset{\{1,\ldots,i\}\subseteq \DD(\tau)}{\underset{n-1\not\in \DD(\tau)}{\sum_{\tau\in \SYT(k+1,1^{n-k-1})}}}s_{\shape(\gamma)}
\end{align}
where $\shape(\gamma)$ gives the partition $\left( \m(\tau)-i, 2, 1^{k-1}\right)$.
\end{prop}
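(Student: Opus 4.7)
The strategy is to apply Lemma \ref{Lem : inclu-exclu} with $b=1$ to $G=\langle \mathcal{E}_{n,n},e_n\rangle$, so that $\mathcal{V}_1$ is the set of hooks with at least two parts and $\mathcal{V}_{b+1}=\mathcal{V}_2=\mathcal{R}$. The lemma reduces computing $\psi(G|_{\mathcal{R}})(q,t)$ to assembling the quantities $f_0(q)=\psi(G|_{\mathcal{V}_1}^{\langle 2\rangle})t^{-1}$ and $f_j(q)=\psi((e_j^\perp G|_{\mathcal{V}_1})^{\langle 2\rangle}|_{\mathcal{V}_1})t^{-1}$ for $j\geq 1$, each of which is in turn computable from the hook data already established in Theorem \ref{Thm : main} and its two-variable specialisation Lemma \ref{Lem : nabla hookhook}.

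Concretely, Lemma \ref{Lem : nabla hookhook} immediately yields $f_0(q)=\sum_{i=2}^{n-1}q^{\binom{n}{2}-i}$. For $j\geq 1$ I would expand $e_j^\perp$ via the dual Pieri rule applied term-by-term to the multivariate hook expansion $G|_{\mathcal{V}_1}=\sum_{\gamma\in T_n,\ \h(\gamma)\leq n-3}s_{(\A(\gamma)+\h(\gamma)+1,\,1^{n-2-\h(\gamma)})}$ from Theorem \ref{Thm : main}; after restricting to two variables and then to $\mathcal{V}_1$, only paths of height $n-3-j$ (the ``first-row fixed'' case) or of height $n-2-j$ with $\A(\gamma)+\h(\gamma)\geq 1$ (the ``first-row shortened'' case) survive, and both contribute the monomial $q^{\A(\gamma)+n-2-j}$. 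Proposition \ref{Prop : T_n} then packages each $f_j(q)$ as a combination of Gaussian binomials multiplied by an explicit $q$-power.

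Substituting these expressions into the alternating sum of Lemma \ref{Lem : inclu-exclu} and invoking Proposition \ref{Prop : T_n=sum} (Equations \eqref{Eq : somme alt->pos} and \eqref{Eq : somme alt->pos air+ht}) should collapse the alternating sum into a positive sum indexed by paths in $T_n$, grouped by height and by initial step. I would then split this path sum according to whether $\gamma$ starts with an East step or with a North step: in the first case the bijection $\Phi_k$ of Lemma \ref{Lem : Phi} sends $\gamma$ to a tableau $\tau\in\SYT(k+1,1^{n-k-1})$ with $\{1,2\}\subseteq\DD(\tau)$, and in the second case the bijections $\Omega_k^j$ of Lemma \ref{Lem : Omega} send $\gamma$ to tableaux with $\{1,\ldots,j+2,n-1\}\subseteq\DD(\tau)$. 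Using the identities $\A(\gamma)+\h(\gamma)+1=\m(\Phi_k(\gamma))-\dd(\Phi_k(\gamma))$ and $\A(\gamma)+\h(\gamma)+1=\m(\Omega_k^j(\gamma))-(j+2)$, each surviving term rewrites as $s_{(\m(\tau)-i,2,1^{k-1})}$ for the appropriate index $i$, yielding the shape required by the statement. The equivalence between the two displayed versions of the formula follows by an elementary set-theoretic manipulation of the descent-set conditions.

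The main obstacle is the bookkeeping in the inclusion--exclusion phase: one must carefully track which tableaux do \emph{not} satisfy the bijection image constraints (hence the ``$\not\subseteq$'' condition in the first line of the formula) versus those lying in specific bijection images, and reconcile this with the correct ranges $1\leq k\leq n-4$ and $2\leq i\leq n-k-2$. A subsidiary difficulty is verifying that contributions of paths reaching the top of the staircase produce shapes lying in $\mathcal{R}$ rather than spilling into $\mathcal{V}_3$ or collapsing to shorter hooks, which forces the careful exclusion of boundary cases already built into the bijection domains $T_{n,0,n-k-3}^E$ and $T_{n,0,n-k-3}^{(j)}$.
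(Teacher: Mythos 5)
There is a genuine gap: your inclusion--exclusion only ever uses the hook components of $G=\langle\mathcal{E}_{n,n},e_n\rangle$, and those cannot determine its two-column components. Concretely, for $j\geq 1$ you compute $e_j^\perp$ by the dual Pieri rule applied term-by-term to the hook expansion $G|_{\mathcal{V}_1}$ from Theorem \ref{Thm : main}; that is the quantity the paper calls $g_k$, i.e.\ $\bigl(e_j^\perp (G|_{\mathcal{V}_1})\bigr)^{\langle 2\rangle}|_{\mathcal{V}_1}$. But the quantity the inclusion--exclusion actually needs is $\bigl(e_j^\perp G\bigr)^{\langle 2\rangle}|_{\mathcal{V}_1}$, which receives contributions from the $\mathcal{V}_2$-part of $G$ (the unknown you are solving for) as well as from its hook part. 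The paper makes this explicit through the identity
\begin{equation*}
\bigl(e^\perp_k(G |_{\mathcal{V}_{2}})^{\langle 2\rangle}\bigr)|_{\mathcal{V}_{1}}
=\bigl(e^\perp_k G \bigr)^{\langle 2\rangle}|_{\mathcal{V}_{1}}
-\bigl(e^\perp_k (G |_{\mathcal{V}_{1}})^{\langle 2\rangle}\bigr)|_{\mathcal{V}_{1}},
\end{equation*}
and supplies the first term on the right as an external input $h_k$ from \cite{[Wal2019b]}: via F.~Bergeron's identity $e_d^\perp\langle\mathcal{E}_{n,n},e_n\rangle=\langle\mathcal{E}_{n,n},s_{(d+1,1^{n-d-1})}\rangle$ and its two-variable specialization, this is the tableau formula $\sum_{\tau} q^{\m(\tau)-\dd(\tau)}+\sum_{\tau,\,1\in\DD(\tau)}\sum_i q^{\m(\tau)-i}$ over $\SYT(k+1,1^{n-k-1})$. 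Your proposal never invokes this data (Lemma \ref{Lem : nabla hookhook} is used only for $f_0$), so feeding your $f_j$'s into Lemma \ref{Lem : inclu-exclu} computes, in effect, the $\mathcal{V}_2$-part of the hook-only function $G|_{\mathcal{V}_1}$, which is zero; the claimed positive tableau sums in the statement have no source in your argument.

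Relatedly, the roles of the paths and the bijections are inverted in your sketch. In the paper the surviving positive terms are the tableau sums coming from $h_k$, while the path sum $g_k=\sum_{\h(\gamma)=n-k-3}q^{\A(\gamma)+\h(\gamma)+1}$ (computed from Equation \eqref{Eq : delta mu cacher}) enters with a minus sign; the bijections $\Phi_k$ and $\Omega_k^{i-2}$, through the identities $\A(\gamma)+\h(\gamma)+1=\m(\Phi_k(\gamma))-\dd(\Phi_k(\gamma))$ and $\A(\gamma)+\h(\gamma)+1=\m(\Omega_k^j(\gamma))-(j+2)$, match these negative path terms with exactly the tableaux whose descent sets contain $\{1,2\}$, respectively $\{1,\ldots,i,n-1\}$, and cancel them, which is precisely where the condition $\{1,\ldots,i,n-1\}\not\subseteq\DD(\tau)$ in the statement comes from. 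So the bijections are a cancellation device, not a translation of a positive path sum into tableaux. To repair your argument you would need to (i) add the $h_k$ input (the two-row, two-variable coefficients of $\langle\nabla(e_n),s_{(d+1,1^{n-d-1})}\rangle$ for all $0\leq d\leq k$), (ii) use the displayed decomposition to isolate the $\mathcal{V}_2$ contribution before applying Lemma \ref{Lem : inclu-exclu}, and only then (iii) apply $\Phi_k$ and $\Omega_k^{j}$ to cancel the $g_k$ terms; with only the hook data of Theorem \ref{Thm : main} the computation cannot reach $\langle\mathcal{E}_{n,n},e_n\rangle|_{\mathcal{R}}$.
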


\begin{proof} 
Let $h_k(q)=\psi\left(\sum_{d=0}^{k} (-1)^{k-d}( e_d^\perp \langle \mathcal{E}_{n,n},e_n\rangle |_{\mathcal{V}_{1}})^{\langle 2\rangle}\right)q^{k-d}t^{-1}$, in respect to the notations in Lemma \ref{Lem : inclu-exclu}. Note that $|_{\mathcal{V}_{1}}$ is the same as $|_{\hooks}$. According to \cite{[Wal2019b]}, we have:

\begin{equation*}h_k=\underset{\{1,2\}\subseteq \DD(\tau)}{\sum_{\tau \in \SYT(k+1,1^{n-k-1})}} q^{\m(\tau)-\dd(\tau)}
+\underset{1\in \DD(\tau)}{\sum_{\tau \in \SYT(k+1,1^{n-k-1})}} \sum_{i=2}^{n-k-2} q^{\m(\tau)-i}
\end{equation*}

Let $ g_k(q)=\psi\left((\sum_{d=0}^{k} (-1)^{k-d}e_d^\perp \langle \mathcal{E}_{n,n}, e_n\rangle |_{\mathcal{V}_{1}})^{\langle 2\rangle}|_{\mathcal{V}_{1}}\right)q^{k-d}t^{-1}$. Since there is only one tableau of shape $(n)$ and it has an empty descent set, we already know from Equation \eqref{Eq : delta mu cacher} that:
\begin{align*} g_k(q)&=\sum_{d=0}^{k} (-1)^{k-d}  \underset{\h(\gamma)=n-d-2}{\sum_{\gamma \in T_{n,0}}} q^{\A(\gamma)+\h(\gamma)+1+k-d}+\underset{\h(\gamma)=n-d-3}{\sum_{\gamma \in T_{n,0}}} q^{\A(\gamma)+\h(\gamma)+1+k-d}
\\			&=\underset{\h(\gamma)=n-k-3}{\sum_{\gamma \in T_{n,0}}} q^{\A(\gamma)+\h(\gamma)+1}
\end{align*}

Recall the remark under Equation  \eqref{Eq : delta mu cacher} states that if $d=0$ we do not account for the paths of height $n-2$. Notice $k$ is only defined for $0\leq k \leq n-3$. More over:
\begin{equation*}(e^\perp_k(\langle \mathcal{E}_{n,n},e_n\rangle |_{\mathcal{V}_{2}})^{\langle 2\rangle})|_{\mathcal{V}_{1}}=(e^\perp_k\langle \mathcal{E}_{n,n},e_n\rangle )^{\langle 2\rangle}|_{\mathcal{V}_{1}}-(e^\perp_k (\langle \mathcal{E}_{n,n},e_n\rangle |_{\mathcal{V}_{1}})^{\langle 2\rangle})|_{\mathcal{V}_{1}}
\end{equation*}
Then by Lemma \ref{Lem : inclu-exclu} we have:
  \begin{equation*} 
\Psi(\langle \mathcal{E}_{n,n},e_n\rangle|_{\mathcal{V}{2}})(q,t)=\sum_{k=0}^{n-3} (h_k(q)-g_k(q))t^k.
   \end{equation*}
   Since for the sets $T_{n,0,n-k-3}^{(j)} $, $ T_{n,0,n-k-3}^E$ partitions $T_{n,0,n-k-3}$, we can apply the maps $\Phi_k$ and $\Omega_k^{i-2}$ on the paths of $g_k$. Ergo by Lemma \ref{Lem : Phi} and Lemma \ref{Lem : Omega} we can cancel out all the negative terms and obtain the result has stated. 
\end{proof}

Before we give a simpler interpretation in terms of $T_{n,k}$ we need a new map. Let $\beta_d$  be a family of maps, $\beta_d: \SYT(d+1,1^{n-d-1})_{\{1\}}\rightarrow T_{n,0,n-d-2}$ defined, for $\DD(\tau)=\{1<r_2\cdots <r_{n-d-1}\}$, by:
\begin{equation*}
\beta_d(\tau)=E^{n-1-r_{n-d-1}}NE^{r_{n-d-1}-r_{n-d-2}-1}N\cdots NE^{r_{n-d-i+1}-r_{n-d-i}-1}N\cdots E^{r3-r_2-1}NE^{r_{2}-2}. 
\end{equation*}
(See Figure \ref{Fig : ex beta} for an example.)

\begin{figure}[h!]
\centering
\begin{tikzpicture}[scale=.5]
\draw[gray, thin] (8,0)--(8,5)--(9,5)--(9,4)--(10,4)--(10,3)--(11,3)--(11,2)--(12,2)--(12,1)--(13,1)--(13,0)--(8,0);
\draw[gray, thin] (8,1)--(12,1);
\draw[gray, thin] (8,2)--(11,2);
\draw[gray, thin] (8,3)--(10,3);
\draw[gray, thin] (8,4)--(9,4);
\draw[gray, thin] (9,0)--(9,4);
\draw[gray, thin] (10,0)--(10,3);
\draw[gray, thin] (11,0)--(11,2);
\draw[gray, thin] (12,0)--(12,1);
\draw[red,thick] (8,0)--(9,0)--(9,2)--(10,2)--(10,3);
\node(d) at (8,0){\textcolor{red}{$\bullet$}};
\node(f) at (10,3){\textcolor{red}{$\bullet$}};
\node(2) at (14,2.7){$2$};
\node(5) at (14,1.65){$4$};
\node(6) at (14,0.5){$5$};
\node(tau) at (-2,2){$\tau=$};
\node(m) at (6,2){$\mapsto$};
\draw (0,0)--(0,5)--(1,5)--(1,1)--(3,1)--(3,0)--(0,0);
\draw (0,1)--(1,1);
\draw (0,2)--(1,2);
\draw (0,3)--(1,3);
\draw (0,4)--(1,4);
\draw (2,0)--(2,1);
\draw (1,0)--(1,1);
\node(1) at (0.5,.5){\textcolor{blue}{$1$}};
\node(2) at (0.5,1.5){\textcolor{red}{$2$}};
\node(3) at (0.5,2.5){$3$};
\node(5) at (0.5,3.5){\textcolor{red}{$5$}};
\node(4) at (1.5,.5){\textcolor{red}{$4$}};
\node(6) at (0.5,4.5){$6$};
\node(7) at (2.5,.5){$7$};
\node(des) at (0,-1){$\DD(\tau)=\{1,2,4,5\}$};
\node(beta) at (10,-1){$\beta_2(\tau)=ENNEN$};
\end{tikzpicture}
\caption{An example of the map $\beta_2$ when $n=7$ and $d=2$. The image $\beta_2(\tau)$ is in $T_{7,0,3}$.}
\label{Fig : ex beta}
\end{figure}

\begin{lem}\label{Lem : bij descent}
The maps $\beta_d$ are well defined bijections. Moreover, for  all $\tau$ we have $\m(\tau)=\A(\beta_d(\tau))+\h(\beta_d(\tau))+1$.
\end{lem}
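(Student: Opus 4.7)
The plan is to verify well-definedness, bijectivity, and the area/major-index identity in that order.

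First, write $\DD(\tau) = \{1 = r_1 < r_2 < \cdots < r_{n-d-1}\} \subseteq \{1,\ldots,n-1\}$ and check that every exponent in the defining word is nonnegative: $n-1-r_{n-d-1} \geq 0$ since $r_{n-d-1} \leq n-1$; $r_{j+1}-r_j-1 \geq 0$ by strict monotonicity of $\DD(\tau)$; and $r_2 - 2 \geq 0$ since $r_2 > r_1 = 1$. Counting the letters gives exactly $n-d-2$ north steps and, by telescoping of the east-block exponents, exactly $d$ east steps, so the endpoint lies on the anti-diagonal $x+y = n-2$ at height $n-d-2$. Hence $\beta_d(\tau) \in T_{n,0,n-d-2}$.

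Second, for bijectivity I would exhibit the inverse explicitly. Given $\gamma \in T_{n,0,n-d-2}$, let $n_i$ denote the number of east steps before the $i$-th north step, so $0 \leq n_1 \leq n_2 \leq \cdots \leq n_{n-d-2} \leq d$. Define $r_1 := 1$ and $r_{n-d-i} := n - i - n_i$ for $1 \leq i \leq n-d-2$. The identity $r_{j+1} - r_j = 1 + (n_{n-d-j} - n_{n-d-j-1}) \geq 1$ gives strict monotonicity, while $r_2 = d + 2 - n_{n-d-2} \geq 2$ (since $n_{n-d-2} \leq d$) and $r_{n-d-1} = n-1-n_1 \leq n-1$ place the values in the correct range. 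Thus $\{r_1, \ldots, r_{n-d-1}\}$ is the descent set of a unique hook-shaped tableau in $\SYT(d+1,1^{n-d-1})_{\{1\}}$, and this construction is inverse to $\beta_d$ by direct inspection of the word.

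Third, for the area identity I would first establish the general formula $\A(\gamma) = \sum_{i=1}^{\h(\gamma)} (n - i - 1 - n_i)$ for any $\gamma \in T_{n,0}$. Geometrically, in row $i-1$ of the staircase (which contains boxes at $x = 0, 1, \ldots, n-i-2$) the $i$-th north step occurs at $x = n_i$, and the boxes south-east of it are precisely $x = n_i, n_i+1, \ldots, n-i-2$, contributing $n-i-1-n_i$ cells; summing over $i$ accounts for the entire south-east region. Substituting $n_i = n - i - r_{n-d-i}$ collapses each summand to $r_{n-d-i} - 1$, so
\begin{equation*}
\A(\beta_d(\tau)) = \sum_{i=1}^{n-d-2} (r_{n-d-i} - 1) = \sum_{j=2}^{n-d-1} (r_j - 1) = \m(\tau) - 1 - (n-d-2).
\end{equation*}
Combined with $\h(\beta_d(\tau)) = n-d-2$, this gives $\A(\beta_d(\tau)) + \h(\beta_d(\tau)) + 1 = \m(\tau)$, as claimed.

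The main obstacle is really just the geometric identification of the area statistic as $\sum (n-i-1-n_i)$; the well-definedness and bijectivity are routine bookkeeping on descent positions and step counts, and once the area formula is in hand the identity is a one-line substitution.
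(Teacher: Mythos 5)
Your proposal is correct and follows essentially the same route as the paper's proof: well-definedness by counting steps in the defining word, bijectivity via the explicit inverse descent set $\{n-i-n_i\}\cup\{1\}$ (the paper's ``reverse engineering''/row-area argument made explicit), and the identity via $n_i=n-i-r_{n-d-i}$ together with $\A(\gamma)=\sum_{i}(n-i-1-n_i)$. Your version simply spells out the exponent nonnegativity and the geometric derivation of the area formula in more detail than the paper does.
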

\begin{proof}For all $\tau$ we have the image by $\beta_d$ is a path with $n-d-2$ north steps by construction. So it as height $n-d-2$. The number of steps of the path is:
\begin{align*}
(n-d-2)+(n-1-r_{n-d-1})+(r_2-2)+\sum_{i=1}^{n-d-3}r_{n-d-i}-r_{n-d-i-1}-1=n-2
\end{align*}
So the image is a path of $T_{n,0,n-d-2}$. Given $\beta_d(\tau)=\beta_d(\pi)$ one could reverse engineer and find $\DD(\tau)=\DD(\pi)$. Conversely, the staircase shape of the grid $T_{n,0}$ assures us that the row area is a set of distinct numbers in $\{1,\ldots, n-2\}$. So to each path of height $n-d-2$ there is a tableau with a descent set corresponding to adding one to each element of the set of row area and appending the element $1$. Hence, $\beta_d$ is indeed a bijection.

 Let $n_i$ be the number of east steps before the $i$-th north step of $\beta(\tau)_d$. Then $n_i=n-r_{n-d-i}-i$ by definition of the map $\beta_d$. The area of a path is equal to $\sum_{i=1}^{\h(\beta_d(\tau))}n-i-1-n_i$ therefore $\A(\beta_d(\tau))=\sum_{i=1}^{\h(\beta_d(\tau))}(r_{n-d-i}-1)=(\m(\tau)-1)-\h(\beta_d(\tau))$.
\end{proof}
We can now improve the aspect of our formula.
\begin{prop}
For $R=\{(a,2,1^k)~|~ k\in\mathbb{N}, a\in \mathbb{N}_{\geq 2}\}$, we have:
\begin{equation*}\langle \mathcal{E}_{n,n},e_n\rangle|_{\mathcal{R}}=\sum_{i=2}^{n-3}\sum s_{\shape T(\gamma)},
\end{equation*}
where the second sum is over paths $\gamma\in T_{n,0}$ such that $\gamma\not=N\tilde{\gamma}N^{i-1}$ and $i\leq \h(\gamma)\leq n-3$. Additionally, $\shape T(\gamma)$ is the partition given by $\left(\A(\gamma)+\h(\gamma)+1-i,2,1^{n-3-\h(\gamma)}\right)$.
\end{prop}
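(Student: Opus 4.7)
The plan is to deduce this cleaner, path-level formula from the tableau-level one of the previous proposition by pushing it through the bijection $\beta_k$ of Lemma \ref{Lem : bij descent}. I take as starting point the first equality in Equation \eqref{Eq : 2 colone lift}, namely
$$\langle \mathcal{E}_{n,n}, e_n\rangle|_{\mathcal{R}} \;=\; \sum_{k=1}^{n-4}\sum_{i=2}^{n-k-2} \underset{\{1,\ldots,i,n-1\}\not\subseteq \DD(\tau)}{\underset{1 \in \DD(\tau)}{\sum_{\tau \in \SYT(k+1,1^{n-k-1})}}} s_{(\m(\tau)-i,\,2,\,1^{k-1})}.$$
Under $\beta_k\colon \SYT(k+1,1^{n-k-1})_{\{1\}}\to T_{n,0,n-k-2}$, Lemma \ref{Lem : bij descent} gives $\m(\tau) = \A(\gamma) + \h(\gamma) + 1$, while by definition of the target set $\h(\gamma) = n-k-2$, so $k-1 = n-3-\h(\gamma)$. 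Hence the Schur index transforms exactly into $\shape T(\gamma) = (\A(\gamma) + \h(\gamma) + 1 - i,\, 2,\, 1^{n-3-\h(\gamma)})$.

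Next I translate the descent restriction into a restriction on the shape of the path. Writing $\DD(\tau) = \{1 < r_2 < \cdots < r_{n-k-1}\}$ and inspecting the closed form
$$\beta_k(\tau) = E^{n-1-r_{n-k-1}}\, N\, E^{r_{n-k-1}-r_{n-k-2}-1}\, N \,\cdots\, N\, E^{r_2-2},$$
one reads off that $r_{n-k-1}=n-1$ (i.e.\ $n-1 \in \DD(\tau)$) is equivalent to $\beta_k(\tau)$ starting with an $N$-step, and that $r_j = j$ for all $2 \le j \le i$ (i.e.\ $\{2,\ldots,i\} \subseteq \DD(\tau)$) is equivalent to $\beta_k(\tau)$ ending with the suffix $N^{i-1}$ (each factor $E^{r_j - r_{j-1} - 1}$ collapses to $E^0$). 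Combining, $\{1,\ldots,i,n-1\} \subseteq \DD(\tau)$ corresponds precisely to $\beta_k(\tau)$ being of the form $N\tilde\gamma N^{i-1}$, so negation yields the path condition $\gamma \neq N\tilde\gamma N^{i-1}$ appearing in the proposition.

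Finally I reparametrize the outer sums by setting $h = \h(\gamma) = n-k-2$: the range $1 \leq k \leq n-4$ becomes $2 \leq h \leq n-3$, the range $2 \leq i \leq n-k-2$ becomes $2 \leq i \leq h$, and as $k$ varies the sets $T_{n,0,n-k-2}$ partition the paths of $T_{n,0}$ of height between $2$ and $n-3$. Swapping the order of summation so that $i$ becomes the outer index, only the constraint $i \leq \h(\gamma) \leq n-3$ remains on $\gamma$, and the stated formula follows. The main obstacle I anticipate is the bookkeeping around the case where $\DD(\tau)$ contains consecutive integers strictly beyond $i$, that is, paths whose trailing $N$-run is strictly longer than $N^{i-1}$: such paths still start with $N$ and still end with $N^{i-1}$, so they must still be excluded by the single condition $\gamma \neq N\tilde\gamma N^{i-1}$, with no double counting arising across different values of~$i$.
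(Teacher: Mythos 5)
Your proposal is correct and follows essentially the same route as the paper: start from the tableau-level formula of the preceding proposition, transport it through the bijection $\beta_k$ of Lemma \ref{Lem : bij descent} (using $\m(\tau)=\A(\gamma)+\h(\gamma)+1$ and $k-1=n-3-\h(\gamma)$), identify the excluded descent condition $\{1,\ldots,i,n-1\}\subseteq\DD(\tau)$ with the excluded path shape $N\tilde\gamma N^{i-1}$, and reindex the sums by height. Your explicit verification of the descent-to-path translation simply fills in a step the paper asserts without detail.
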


\begin{proof}For a fixed $k$ and $i$ the sum in Equation \eqref{Eq : 2 colone lift} is over all tableaux in $\SYT(k+1,1^{n-k-1})_{\{1\}}$ for which the descent set doesn't contain the subset $\{1,2,\ldots, i, n-1\}$. Using the bijection in Lemma \ref{Lem : bij descent} we obtain a sum over all paths in $T_{n,0,n-k-2}$ that can not be written as $N\tilde{\gamma}N^{i-1}$.

For $\gamma \in T_{n,0,n-k-2}$ $n-3-\h(\gamma)=k-1$ and $\m(\tau)-i=\A(\beta_k(\tau))+\h(\beta_k(\tau))+1-i$ we have:

\begin{align*}\langle \mathcal{E}_{n,n},e_n\rangle|_{\mathcal{R}}=&\sum_{k=1}^{n-4}\sum_{i=2}^{n-k-2}\underset{\gamma\not=N\tilde{\gamma}N^{i-1}}{\sum_{\gamma\in T_{n,0,n-k-2}}} s_{\shape T(\gamma)}
\\			&=\sum_{i=2}^{n-3}\sum_{k=1}^{n-i-2}\underset{\gamma\not=N\tilde{\gamma}N^{i-1}}{\sum_{\gamma\in T_{n,0,n-k-2}}} s_{\shape T(\gamma)}
\\			&=\sum_{i=2}^{n-3}\underset{i\leq \h(\gamma)\leq n-3}{\underset{\gamma\not=N\tilde{\gamma}N^{i-1}}{\sum_{\gamma\in T_{n,0}}}} s_{\shape T(\gamma)}
\end{align*}
\end{proof}

We have not found a way to show that $e^\perp_1(\langle \mathcal{E}_{n,n},e_n\rangle|_{\mathcal{ R}})|_{\hooks}$ is equivalent to Equation \eqref{Eq : difference}. Such an equivalence would prove the case $\mu=2,1^{n-2}$.

%%%%%%%%%%%%%%%%%%%%%%%%%%%%%%%%%%%%%%%%
%%%%%%%%%%%%%%%%%%		Conclusion and further questions
%%%%%%%%%%%%%%%%%%%%%%%%%%%%%%%%%%%%%%%%

\section{Conclusion and further questions}

It would be interesting to show that the formulas hold for all $\mu$ when $r=1$ and all $r$ when $\mu=1^n$. As we mentioned previously, this could be obtained, for $\mu$ a hook, by having a formula for the restriction to Schur functions indexed by partitions
with two columns. If one could write the $q,t$ statistics of the $m$-Schr\"oder paths in terms of Schur functions we could have a more general formula restricting only to shape $(a,b,1^j)$ with $a$ and $b$ arbitrary. Finally, the author is already working on showing how existing formulas for special cases are equivalent to the proposed formula. 

\section*{acknowledgments} 
I would like to thank  Fran\c cois Bergeron for sharing a draft of \cite{[B2018]} and for related useful discussions. Thank you to Franco Saliola for proof reading and useful comments. This is the extended version of the FPSAC 2019 contribution \cite{[Wal2019d]}.

%\nocite{*}
\bibliographystyle{alpha}
\bibliography{bibliographie}

\newcommand{\etalchar}[1]{$^{#1}$}
\begin{thebibliography}{BGHT99}

\bibitem[BCP18]{[BCP2018]}
N.~Bergeron, C.~Ceballos, and V.~Pilaud.
\newblock Hopf dreams, 2018.

\bibitem[Ber]{[B2018]}
F.~Bergeron.
\newblock Structural properties of the ($gl_k\times \mathbb{S}_n$)-modules of
  multivariate diagonal harmonics polynomials.
\newblock In preperation.

\bibitem[Ber09]{[B2009]}
Fran\c{c}ois Bergeron.
\newblock {\em Algebraic combinatorics and coinvariant spaces}.
\newblock CMS Treatises in Mathematics. Canadian Mathematical Society, Ottawa,
  ON; A K Peters, Ltd., Wellesley, MA, 2009.

\bibitem[Ber13]{[B2013]}
Fran\c{c}ois Bergeron.
\newblock Multivariate diagonal coinvariant spaces for complex reflection
  groups.
\newblock {\em Adv. Math.}, 239:97--108, 2013.

\bibitem[BG99]{[BG1999]}
F.~Bergeron and A.~M. Garsia.
\newblock Science fiction and {M}acdonald's polynomials.
\newblock In {\em Algebraic methods and {$q$}-special functions ({M}ontr\'eal,
  {QC}, 1996)}, volume~22 of {\em CRM Proc. Lecture Notes}, pages 1--52. Amer.
  Math. Soc., Providence, RI, 1999.

\bibitem[BGHT99]{[BGHT1999]}
F.~Bergeron, A.~M. Garsia, M.~Haiman, and G.~Tesler.
\newblock Identities and positivity conjectures for some remarkable operators
  in the theory of symmetric functions.
\newblock {\em Methods Appl. Anal.}, 6(3):363--420, 1999.
\newblock Dedicated to Richard A. Askey on the occasion of his 65th birthday,
  Part III.

\bibitem[BPR12]{[BP2012]}
Fran\c{c}ois Bergeron and Louis-Fran\c{c}ois Pr\'{e}ville-Ratelle.
\newblock Higher trivariate diagonal harmonics via generalized {T}amari posets.
\newblock {\em J. Comb.}, 3(3):317--341, 2012.

\bibitem[CM18]{[CM2015]}
E.~Carlson and A.~Mellit.
\newblock A proof of the schuffle conjecture.
\newblock {\em J. Amer. Math. Soc.}, 31(8), 2018.

\bibitem[GH96]{[GH1996a]}
A.~M. Garsia and M.~Haiman.
\newblock A remarkable {$q,t$}-{C}atalan sequence and {$q$}-{L}agrange
  inversion.
\newblock {\em J. Algebraic Combin.}, 5(3):191--244, 1996.

\bibitem[GNR16]{[GNR2016]}
E.~Gorsky, A.~Negut, and J.~Rasmussen.
\newblock Flag hilbert schemes, colored projectors and khovanov-rozansky
  homology, 2016.

\bibitem[Hag04]{[H2004]}
J.~Haglund.
\newblock A proof of the {$q,t$}-{S}chr\"oder conjecture.
\newblock {\em Int. Math. Res. Not.}, 2004(11):525--560, 2004.

\bibitem[Hai02]{[H2002]}
Mark Haiman.
\newblock Vanishing theorems and character formulas for the {H}ilbert scheme of
  points in the plane.
\newblock {\em Invent. Math.}, 149(2):371--407, 2002.

\bibitem[HHL{\etalchar{+}}05]{[HHLRU2005]}
J.~Haglund, M.~Haiman, N.~Loehr, J.~B. Remmel, and A.~Ulyanov.
\newblock A combinatorial formula for the character of the diagonal
  coinvariants.
\newblock {\em Duke Math. J.}, 126(2):195--232, 2005.

\bibitem[Hog17]{[H2017]}
M.~Hogancam.
\newblock Khovnov-rozansky homology and higher catalan sequences, 2017.

\bibitem[HRS18]{[HRS2018b]}
J.~Haglund, B.~Rhoades, and M.~Shimozono.
\newblock Ordered set partitions generalized coinvariant algebras and the delta
  conjecture.
\newblock {\em Adv. Math.}, 329(1):851--915., 2018.

\bibitem[Kho07]{[K2007]}
Mikhail Khovanov.
\newblock Triply-graded link homology and {H}ochschild homology of {S}oergel
  bimodules.
\newblock {\em Internat. J. Math.}, 18(8):869--885, 2007.

\bibitem[Mac95]{[Mac1995]}
I.~G. Macdonald.
\newblock {\em Symmetric functions and {H}all polynomials}.
\newblock Oxford Mathematical Monographs. The Clarendon Press, Oxford
  University Press, New York, second edition, 1995.
\newblock With contributions by A. Zelevinsky, Oxford Science Publications.

\bibitem[Mel16]{[M2016]}
C.~Mellit.
\newblock Toric braids and (m,n)-parking functions, 2016.

\bibitem[OR18]{[OR2018]}
Alexei Oblomkov and Lev Rozansky.
\newblock Knot homology and sheaves on the {H}ilbert scheme of points on the
  plane.
\newblock {\em Selecta Math. (N.S.)}, 24(3):2351--2454, 2018.

\bibitem[Wala]{[Wal2019b]}
N.~Wallace.
\newblock An attempt to "schurize" the schr\"oder paths and bijections into
  tableaux.
\newblock In preperation.

\bibitem[Walb]{[Wal2019c]}
N.~Wallace.
\newblock A new path combinatorial object bijected into known combinatorial
  objects.
\newblock In preperation.

\bibitem[Wal19]{[Wal2019d]}
N.~Wallace.
\newblock Explicit combinatorial formulas for some irreducible characters of
  the $gl_k\times \mathbb{S}_n$-module of multivariate diagonal harmonics.
\newblock {\em Séminaire Lotharingien de Combinatoire: Proceedings of the 31st
  Conference on Formal Power Series and Algebraic Combinatorics (Ljubljana)},
  page 12., 2019.

\end{thebibliography}

\end{document}